\documentclass{amsart}
\usepackage[utf8]{inputenc}
\usepackage[polutonikogreek,greek,french,english]{babel}
\usepackage[scr=kp]{mathalpha}
\usepackage{dutchcal}
\usepackage[usenames,dvipsnames,svgnames,table]{xcolor}
\usepackage{mathtools}
\usepackage{amsmath}
\usepackage{amsthm}
\usepackage{geometry}
\usepackage{amssymb}
\usepackage{bbold}
\usepackage[all]{xy}
\usepackage{etoolbox}
\usepackage[hidelinks]{hyperref}
\usepackage{enumitem}
\usepackage[pdf]{pstricks}
\usepackage{pgfplots}
\pgfplotsset{compat = newest}
\usepackage{ytableau}
\makeindex
\usepackage{subcaption}

\usepackage{tikz}
\usepackage{tikz-cd}
\usetikzlibrary{math}
\usepackage{pgf}
\usetikzlibrary{arrows}
\usetikzlibrary{shapes.geometric}
\usetikzlibrary{shapes.misc}

\theoremstyle{plain}
\newtheorem{tho}[subsubsection]{Theorem}
\newtheorem{thintro}[subsection]{Theorem}
\newtheorem{lemme}[subsubsection]{Lemma}
\newtheorem{prop}[subsubsection]{Proposition}
\newtheorem{propintro}[subsection]{Proposition}
\newtheorem{cor}[subsubsection]{Corollary}

\theoremstyle{remark}
\newtheorem{rmq}[subsubsection]{Remark}

\theoremstyle{definition}
\newtheorem{deff}[subsubsection]{Definition}
\newtheorem{defintro}[subsection]{Definition}
\newtheorem{example}[subsubsection]{Example}

\newcommand{\cA}{\mathcal{A}}
\newcommand{\cE}{\mathcal{E}}
\newcommand{\cF}{\mathcal{F}}
\newcommand{\cG}{\mathcal{G}}
\newcommand{\cH}{\mathcal{H}}
\newcommand{\cI}{\mathcal{I}}
\newcommand{\cL}{\mathcal{L}}
\newcommand{\cml}{\mathcal l}
\newcommand{\cM}{\mathscr{M}}
\newcommand{\cN}{\mathscr{N}}
\newcommand{\cO}{\mathcal{O}}
\newcommand{\cQ}{\mathcal{Q}}

\newcommand{\cT}{\mathcal{T}}

\newcommand{\cV}{\mathcal{V}}
\newcommand{\cW}{\mathcal{W}}

\newcommand{\eA}{\mathbb{A}}
\newcommand{\eC}{\mathbb{C}}
\newcommand{\eE}{\mathbb{E}}
\newcommand{\eF}{\mathbb{F}}
\newcommand{\ek}{\mathbb{k}}
\newcommand{\eP}{\mathbb{P}}
\newcommand{\eR}{\mathbb{R}}
\newcommand{\eS}{\mathbb{S}}
\newcommand{\eV}{\mathbb{V}}
\newcommand{\eZ}{\mathbb{Z}}
\newcommand{\eUn}{\mathbb{1}}

\newcommand{\fC}{\mathfrak{C}}
\newcommand{\fH}{\mathfrak{H}}
\newcommand{\fp}{\mathfrak{p}}
\newcommand{\fS}{\mathfrak{S}}
\newcommand{\fU}{\mathfrak{U}}

\newcommand{\bfW}{\mathbf{W}}

\newcommand{\Thom}[2][]{\operatorname{Th}_{#1}\left(#2\right)}
\newcommand{\ulHom}[2][]{\underline{\operatorname{Hom}}_{#1}\left(#2\right)}

\newcommand{\Au}{{\eA^1}}
\newcommand{\GW}{GW}
\newcommand{\fId}{{\mathbf{I}}}
\newcommand{\Witt}{\operatorname{W}}
\newcommand{\KMW}[2][\ast]{K^{MW}_{#1}\left( #2 \right)}
\newcommand{\sKW}{\underline{K}^W}
\newcommand{\Chowtilde}[2][]{\widetilde{\operatorname{CH}}{}^{#1}\!\left(#2\right)}
\newcommand{\Chowtildecov}[2][]{\widetilde{\operatorname{CH}}_{#1}\left(#2\right)}
\newcommand{\StableHo}[2][]{\operatorname{S}\!\cH^{#1}\!\left(#2\right)}

\newcommand{\simquad}{\sim_q}

\newcommand{\Aut}{\mathrm{Aut}}
\newcommand{\Picard}[1]{\mathrm{Pic}\left(#1\right)}
\newcommand{\qscal}[1]{\left\langle #1 \right\rangle}
\newcommand{\Id}{\mathrm{Id}}
\newcommand{\SL}{\operatorname{SL}}
\newcommand{\Gl}{\operatorname{GL}}
\newcommand{\PGL}{\operatorname{PGL}}
\newcommand{\isomr}{\Tilde{\to}}
\newcommand{\Galois}{\operatorname{Gal
}}

\newcommand{\Spec}[1]{\operatorname{Spec}\left(#1\right)}
\newcommand{\Proj}[1]{\operatorname{Proj}\left(#1\right)}
\newcommand{\Symalg}[2][]{\operatorname{Sym}^{#1}\left(#2\right)}
\newcommand{\QCoh}[2][]{\operatorname{QCoh}^{#1}\left(#2\right)}

\newcommand{\Grass}{\operatorname{Gr}}
\newcommand{\Schcat}{\mathbf{Sch}}
\newcommand{\Hilbsch}[2][]{\mathcal{H}ilb_{#1}\left(#2\right)}
\newcommand{\Hdrtes}{\Sigma_1}
\newcommand{\Chowct}[2][]{\operatorname{CH}^{#1}\!\left( #2 \right)}
\newcommand{\Chowcov}[2][]{\operatorname{CH}_{#1}\!\left( #2 \right)}
\newcommand{\Chd}[2][]{\operatorname{Ch}^{#1}\!\left( #2 \right)}

\newcommand{\rest}[2]{{#1}_{\mid #2}}
\newcommand{\accol}[1]{\left\{#1\right\}}
\newcommand{\vabs}[1]{\left| #1\right|}
\newcommand{\si}{\text{if}}
\newcommand{\prtent}[1]{\left\lfloor #1 \right\rfloor}

\title{A quadratically enriched count of lines on a smooth del Pezzo surface of degree 5}
\author{CHACHAY Victor} 
\address{Université Bourgogne Europe, CNRS, IMB UMR 5584, 21000 Dijon, France}
\email{victor.chachay@ube.fr}
\date{July 2026}

\begin{document}

\begin{abstract}
In this paper, we give an enriched count for the 10 lines on a smooth del Pezzo surface $S$ of degree 5 as the quadratic degree of the Euler class of a locally free sheaf. To do this, we see $S$ as a section of the quintic del Pezzo threefold $V_5$ and the lines on $S$ as a sub-scheme of the $\eP^2$ of lines on $V_5.$ We then use some oriented Schubert calculus and Bott residue formula with $N_{\SL_2}-$action to compute the degree of the Euler class.
\end{abstract}

\maketitle

\tableofcontents

\section{Introduction}
 
\addtocontents{toc}{\protect\setcounter{tocdepth}{1}}

The count of lines on a smooth del Pezzo surface, over an algebraically closed field, is a well-known enumerative invariant. For a del Pezzo surface $S$ of degree 5, there are 10 lines, which are the $(-1)-$curves. Over a perfect field $\ek$ of characteristic different from 2, the 10 lines of $S$ may only exist over $\overline{\ek}.$ To recover an enumerative invariant over $\ek,$ we use the quadratically enriched intersection theory, namely the Chow-Witt groups. We do it in the same fashion as the enriched count for the 27 lines on a cubic surface from \cite{Kass_Wickelgren}. We realise the Hilbert scheme of lines on a del Pezzo surface as the zero-locus of a global section of a sheaf $\cE$ over a scheme (here, it will be $\eP^2$). As $\cE$ is relatively orientable, the degree of its Chow-Witt Euler class is an element in the Grothendieck-Witt group of the quadratic forms $\GW(\ek)$ and is our quadratically enriched count. 

\begin{thintro}[Main Theorem \ref{THO - classe euler dP 5}]
Let $S$ be a smooth del Pezzo surface of degree 5 over a perfect field $\ek$ and $\mathrm{char}(\ek) \neq 2$. An enriched count of lines on $S$ is given by
\[\deg^{CW} e(\cE) = 2 + 4h \in \GW(\ek)\]
where $h = 1 + \qscal{-1}$ is the hyperbolic element in $\GW(\ek).$
\end{thintro}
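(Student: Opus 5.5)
We argue via the geometry of the quintic del Pezzo threefold $V_5$, following the abstract. First, embed $S$ as a linear section $S = V_5 \cap H$ of the threefold $V_5 = \Grass(2,5)\cap \eP^6 \subset \eP^9$, where $H$ is a hyperplane of $\eP^6$. The Hilbert scheme of lines on $V_5$ is classically $\eP^2 = \eP(W)$ with $W$ an irreducible $\SL_2$-representation: $V_5$ is the closure of the $\SL_2$-orbit of binary sextics, and its lines are parametrised by $\eP(\Symalg[2]{\ek^2})\simeq\eP^2$. Let $\cL\to\eP^2$ be the universal line with projection $p:\cL\to\eP^2$ and evaluation $q:\cL\to V_5\subset\eP^6$. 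A line lies on $S$ exactly when the linear form $\ell_H$ defining $H$ vanishes on it. Set $\cE = p_*q^*\cO(1)$, a rank $2$ bundle on $\eP^2$. The form $\ell_H$ induces a section $\sigma_H$ of $\cE$, and $Z(\sigma_H)$ is the Hilbert scheme of lines on $S$. Smoothness of $S$ gives that $Z(\sigma_H)$ is finite étale of degree $10$, which matches $c_2(\cE)=10$ as a consistency check.

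Next we check relative orientability. We need $\det\cE\otimes\omega_{\eP^2}$ to be a square in $\Picard{\eP^2}$, that is, $c_1(\cE)\equiv 3 \pmod 2$. Concretely, $\cE$ is the restriction of the dual tautological bundle $Q^\vee$ (here $S^\vee$ of $\Grass(2,7)$) along $\eP^2\hookrightarrow \Grass(2,7)$. Its $c_1$ equals the pullback of the Plücker class, which restricts to $\cO(c)$ on $\eP^2$; we expect $c$ odd (the universal line is the $\eP^1$-bundle of the tangent conics). Then $\deg^{CW} e(\cE)\in\GW(\ek)$ is well defined and independent of the section. Its rank is $10$, so it suffices to compute its signature-type invariant, i.e. its image in $\Witt(\ek)$.

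To compute the degree, we use the $N_{\SL_2}$-action. The normaliser $N$ of the torus $T$ in $\SL_2$ acts on $\eP^2$ and on $\cE$. We apply the Bott residue formula in its Chow–Witt/Witt version (Levine's formula for $N$-actions): the Euler class degree in $\Witt(\ek)$ localises to the fixed points of $T$ that are isolated for the $N$-action in the appropriate sense, and each contributes a residue $e^N(\cE_x)/e^N(T_x\eP^2)$. The $T$-fixed points of $\eP(\Symalg[2]{})$ are $[x^2],[xy],[y^2]$. The Weyl element swaps $[x^2]$ and $[y^2]$, and a swapped pair contributes hyperbolically (zero in $\Witt$). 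Only $[xy]$ contributes, and it gives $\pm\qscal{\pm1}$-type residues computed from the weights of $\cE_{[xy]}$ and $T_{[xy]}\eP^2$ as $N$-representations. The expectation is that the image in $\Witt(\ek)$ is $2\qscal{1}$, i.e. $2$. Combined with rank $10$, this yields $2+4h$.

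The main obstacle is this last step. We must identify precisely the $N$-representations at $[xy]$, including the sign conventions of the orientation, and then invoke the correct version of Bott residues for $N_{\SL_2}$. The $N$-weights there are non-split, with the Weyl element acting by a twist, so the local contribution must be evaluated carefully in $\Witt(\ek)$. As a cross-check, we would also verify the answer over $\eR$: counting real lines with local index $\pm1$ on a real form, the signature must be $2$. Alternatively, we would compare with an oriented Schubert calculus computation of $e(S^\vee)$ pulled back to $\eP^2$.
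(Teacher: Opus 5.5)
Your outline is the paper's: $S$ as a hyperplane section of $V_5$, its lines as the zero locus of a section of the rank-$2$ bundle $\cF=\pi_\ast\psi^\ast\cO(1)=\cE\otimes\cO_\fH(1)$ on $\fH\simeq\eP^2$, relative orientability, Levine's $N$-equivariant Bott residue formula in which only the $\sigma$-stable torus-fixed point $[xy]$ contributes, and $\deg c_2(\cF)=10$ to pass from $\Witt(\ek)$ to $\GW(\ek)$.

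\textbf{The main gap: the residue at $[xy]$ is never computed.} The proposal stops exactly where the content of the theorem lies, since it only \emph{expects} this residue to be $2$. What you actually establish is $\deg^{CW}e(\cF)=w+nh$ with $\mathrm{rk}(w)+2n=10$. That is equally consistent with $5h$, $10\qscal{1}$ or $2+4h$. All the quadratic information sits in $w$, so asserting $w=2$ amounts to asserting the theorem.

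The missing step is to identify the two fibres at $[xy]$ as $N$-representations and evaluate their Witt Euler classes:
\begin{itemize}
\item Take $t\cdot(e_0,e_1)=(t^ae_0,t^{-a}e_1)$ with $a$ odd, and $\sigma\cdot(e_0,e_1)=(e_1,-e_0)$ on $\fC=\eP(V)$. Then $\cF=v_\ast\cO(5,1)$ for the double cover $v:\fC\times\fC\to\fH$.
\item The point $[xy]$ lies off the branch conic. The fibre $\cF_{[xy]}$ is spanned by $e_0^5f_1$ and $e_1^5f_0$, of $t$-weights $\pm4a$, and $\sigma$ sends each to minus the other. So $\cF_{[xy]}\simeq\tilde{\cO}^-(4a)$.
\item The tangent space $T_{[xy]}\fH$, read off from the Euler sequence of $\eP(\Symalg[2]{V})$, has $t$-weights $\pm2a$.
\item Levine's formula $e^{\cW}(\tilde{\cO}^+(m))=-\epsilon(m)\tfrac{m}{2}\tilde e$ for even $m$, together with $e^{\cW}(\tilde{\cO}^-(m))=-e^{\cW}(\tilde{\cO}^+(m))$, gives classes $\pm2a\tilde e$ and $\pm a\tilde e$. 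The residue is therefore $\pm2$.
\item The paper finds $-2a\tilde e$ and $-a\tilde e$, hence $w=2$.
\end{itemize}
When you do this yourself, the sign is the whole point and must be tracked. In $T_{[xy]}\fH=\operatorname{Hom}(K,L)$, the line $L$ spanned by the image of $e_0e_1$ carries $\sigma(e_0e_1)=-e_0e_1$. Moreover, the sign of the residue only has meaning relative to a fixed $N$-equivariant relative orientation of $\cF$: rescaling the orientation by $u\in\ek^\times$ multiplies the degree by $\qscal{u}$.

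\textbf{Orientability is asserted, not proved.} You only ``expect'' $c_1(\cF)$ to be odd. The paper gets $\det\cE\simeq\cO(3)$ from Schwarzenberger's computation for $v_\ast(\cO(4)\boxtimes\cO)$, so $\det\cF\simeq\cO(5)\simquad\omega_\fH$. You can also see this from $\det v_\ast M\simeq\mathrm{Nm}_v(M)\otimes\det v_\ast\cO$ with $v_\ast\cO\simeq\cO\oplus\cO(-1)$.

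\textbf{Non-split forms of $V_5$ are not handled.} This is the more serious of the two remaining points. Your $N$-action presupposes that $V_5$ is the split form, i.e.\ that the conic $\fC$ underlying the special lines is $\eP^1$. Your description of $V_5$ as an $\SL_2$-orbit closure of binary sextics already assumes this. Over a general perfect field, the $V_5$ through $S$ can be a twisted form with no $N$-action over $\ek$, and then the residue computation does not apply. You need either to show that $S$ always lies on a split $V_5$, or to prove that the degree is independent of the form.

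The paper does the latter. It views $\fH\subset\Grass(7,2)$, rewrites the count as the degree of $e(\cQ)\cdot[\fH]\in\Chowtilde[10]{\Grass(7,2),\det\cQ}$, and uses oriented Schubert calculus to argue that this class does not depend on the form. In your proposal Schubert calculus is only an optional cross-check, so this reduction is missing.

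If you take that route, note that $\int_\fH c_1(\cQ)^2=\deg c_1(\cF)^2=25$ while $\int_\fH c_2(\cQ)=10$. Hence in $\Chowct[8]{\Grass(7,2)}$ one has $[\fH]=10\sigma_{4,4}+15\sigma_{5,3}$, and the lifting argument has to account for the $\sigma_{5,3}$-component as well.
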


More explicitly, over an algebraically closed field, there is only one del Pezzo surface of degree 5, up to isomorphism, and it can be seen as a linear section of the Grassmannian $\Grass(5,2)$ in its Plücker embedding in $\eP^9.$ It may also be seen as a blow-up of $\eP^2$ in $4$ points in general position. From this blow-up characterisation, one gets the 10 lines arranged in a Petersen diagram. The first characterisation still holds over any field.
\begin{propintro}[Proposition \ref{PROP - plongement dP dans V5 dans P9} in the text]
Let $S$ be a smooth del Pezzo surface of degree $5$ over a field $\ek.$ It embeds into a Fano threefold $V_5$ which in turn embeds in the Grassmannian $\Grass(5,2).$
\end{propintro}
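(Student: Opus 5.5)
We argue through the anticanonical model and Galois descent. First, $S$ is embedded in $\eP^5$ by $|-K_S|$. Indeed $-K_S$ is very ample for a smooth del Pezzo surface of degree $5$, and $h^0(-K_S)=d+1=6$ by Riemann--Roch and Kodaira vanishing, which holds in any characteristic for del Pezzo surfaces. Over $\overline{\ek}$ we use the classical description: $S_{\overline{\ek}}$ is the blow-up of $\eP^2$ in four general points, and its anticanonical image is a codimension-$4$ linear section of $\Grass(5,2)\subset\eP^9$. The aim is to show that this Grassmannian description descends to $\ek$. The key input is the result (Shepherd-Barron, Skorobogatov; see also Swinnerton-Dyer and Enriques) that a smooth quintic del Pezzo surface always has a $\ek$-rational point. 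We will instead use the more intrinsic route via a rank-$2$ bundle: over $\overline{\ek}$ there is a unique stable rank-$2$ bundle $E$ with $c_1(E)=-K_S$, $h^0(E)=5$, namely the restriction of the tautological quotient bundle of $\Grass(5,2)$.

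Concretely, the plan has three steps. (1) Over $\overline{\ek}$, we identify the five conic bundle classes $F_1,\dots,F_5$, each with $F_i^2=0$ and $-K_S\cdot F_i=2$. We also identify the five contractions to $\eP^2$, whose classes $L_j$ satisfy $L_j^2=1$. The Galois group acts on $\mathrm{Pic}(S_{\overline{\ek}})$ through $W(A_4)\simeq\fS_5$, permuting these five-element sets. (2) We construct $E$ canonically, for instance as the unique nontrivial extension or as the kernel of the evaluation map $H^0(-K_S)\otimes\cO\to\ldots$. Uniqueness of $E$ up to isomorphism, together with simplicity ($\mathrm{End}(E)=\overline{\ek}$), gives that $E$ is Galois-invariant. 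The obstruction to descending it is then a Brauer class in $\mathrm{Br}(\ek)$. (3) We kill this obstruction. Since $\mathrm{End}(E)$ is trivial, the obstruction is the class of a Severi--Brauer-type twist. Its period divides $\mathrm{rk}\,E=2$, and also divides $\chi$-type quantities like $h^0(E)=5$. Being killed by both $2$ and $5$, the obstruction vanishes. This makes $E$ defined over $\ek$, and then $H^0(E)$ is a $5$-dimensional $\ek$-vector space.

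With $E$ descended, the rest is formal. The evaluation $H^0(E)^\vee$-map gives a morphism $S\to\Grass(5,2)$, which sends a point to the rank-$2$ quotient $H^0(E)\twoheadrightarrow E_x$. It is a closed embedding because it is so after base change to $\overline{\ek}$. Its Plücker composite is given by $\det E=-K_S$, so the image spans a $\eP^5$: the image of $\Lambda^2H^0(E)\to H^0(-K_S)$ is $6$-dimensional, hence has a $4$-dimensional kernel $W\subset\Lambda^2H^0(E)$. We then choose a $\ek$-rational hyperplane $\eP^6\supset\eP^5$ inside $\eP^9$. Since $\ek$ may be finite, this requires a small argument: the choices form an open subset of the $\eP^3=\eP(W)$ of hyperplanes containing $\langle S\rangle$. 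The open condition is that the section is smooth of dimension $3$, which holds generically over $\overline{\ek}$ by the classical theory. Over finite fields we can alternatively use that all smooth sections $V_5$ are isomorphic and argue via the $\Gl_5$-orbit structure on $\Grass(4,\Lambda^2)$. The section $V_5=\Grass(5,2)\cap\eP^6$ contains $S=V_5\cap\eP^5$ as a hyperplane section. Smoothness of $V_5$ near $S$ follows from smoothness of $S$ as a Cartier divisor; we need it to hold everywhere, and we ensure this by the generic choice.

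The main obstacle I expect is step (3), the descent of $E$. Conceptually, this amounts to showing that the Galois action on the five conic-bundle/contraction data always comes from a genuine rational $5$-dimensional space. If the Brauer argument turns out too slick, the fallback is the explicit rational-point theorem. With a $\ek$-point $p$ not on a line, projection from $p$ realises $S$ as a blow-up related to a rational quartic, and we can write explicit Plücker equations. The other delicate point is finite fields in the hyperplane-choice step. There, I would use a Lang-type argument: all smooth $V_5$ are forms of a variety with connected automorphism group $\PGL_2$, hence trivial over finite fields. I would also count rational points in the open subset of $\eP^3$ when $\ek$ is large enough.
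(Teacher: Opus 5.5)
Your argument for the Grassmannian part is correct, but it takes a different route from the paper. The paper needs no descent. The minimal free resolution of the ideal of the anticanonical model $S\subset\eP^5$ is defined over $\ek$. By Buchsbaum--Eisenbud, its middle map is a $5\times 5$ skew matrix of linear forms whose ten entries span $H^0(S,\omega_S^\vee)$. This gives a surjection $\Lambda^2\ek^5\to H^0(S,\omega_S^\vee)$, that is, a rational $\eP^5\subset\eP^9$ on which the Pl\"ucker quadrics restrict to the five Pfaffians, so $S=\Grass(5,2)\cap\eP^5$ directly over $\ek$.

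You instead descend the Mukai bundle, and the step you feared is fine. The class is $2$-torsion because $\det E\simeq\omega_S^\vee$ is already defined over $\ek$. Its index divides $5$ because $H^0$ of the descended twisted sheaf is a $5$-dimensional twisted vector space. Your route produces the $5$-dimensional space intrinsically as $H^0(E)$, but it imports uniqueness and simplicity of $E$ over $\overline{\ek}$; the Pfaffian route gets the corresponding rationality for free from the canonicity of the resolution.

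For the passage to $V_5$, the paper only completes a basis and never checks that the $\eP^6$-section is smooth. Your remark that one needs a rational point in an open subset of the $\eP^3$ of choices is the more careful treatment, and over infinite $\ek$ it settles the matter.

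The remaining gap is finite $\ek$, which the paper's conventions allow ($\mathbb{F}_3,\mathbb{F}_5,\dots$). Lang's theorem only tells you that $V_5$ has no nontrivial forms over $\mathbb{F}_q$. It does not tell you that your $S$ is a hyperplane section of that $V_5$. Counting points for $q$ large leaves small $q$ open. What is missing is a description of the bad locus.

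\begin{itemize}
\item A choice of $\eP^6$ is a $3$-dimensional $B\subset W$.
\item For $\beta\in\Lambda^2H^0(E)$, viewed as a skew form on $H^0(E)^\vee$, the hyperplane $\{\beta=0\}$ is tangent to the Grassmannian at a point exactly when the corresponding $2$-plane lies in $\ker\beta$. A nonzero $\beta$ of rank $4$ has one-dimensional kernel.
\item Hence $\Grass(5,2)\cap\eP^6$ is a smooth threefold iff $B$ contains no decomposable vector. If $a\wedge b\in B$, it is singular at the nonempty set of $2$-planes in $\ker(a\wedge b)$ lying in $\eP^6$.
\item The decomposable vectors of $W$ form five lines, one for each conic bundle structure on $S$. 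They are permuted by Galois, and any four of them span $W$.
\end{itemize}

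So the bad $B$ are those containing one of these five lines, and an explicit count is needed. In the split case there are $(q-2)(q^2-2q+2)$ admissible $B$ over $\mathbb{F}_q$, which is positive for $q\geq 3$. Every non-split Frobenius class leaves more.

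For $q=2$ and split $S$ there are none. Scaling the five vectors so that they sum to zero, a functional nonzero on all of them would take the value $1$ on their sum. So no argument based only on homogeneity, such as Lang's theorem, can close this step. (Characteristic $2$ is excluded by the paper's conventions.)
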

By Fano threefold $V_5,$ we mean a Fano threefold of index 2, Picard rank 1 and degree 5. In particular, there is only one such variety, up to isomorphism, over an algebraically closed field. However, there are different $\ek-$forms possible in general. This remark on $\ek-$forms is what breaks the blow-up description in general: a degree 5 del Pezzo surface $S$ may not be isomorphic to a blow-up of $\eP^2$ and only $S_{\overline{\ek}}$ may be.

From \cite{FN89}, the Hilbert scheme $\fH$ of lines on $V_5$ is isomorphic to $\eP^2$, even over a non-algebraically closed field. We use the incidence correspondence of Diagram (\ref{EQU - diagramme incidence Fano})
\begin{align*}
    \xymatrix{&&\fU\ar[ld]_\psi \ar[rd]^\pi & \\ S \hspace{0.1cm}\ar@{^{(}->}[r] & V_5&&\fH}
\end{align*}
with the del Pezzo surface $S$ in a $V_5$ and $\fU$ the universal family of lines on $V_5.$ With some pullback and push-forward along this incidence correspondence, as explained in \cite{Cha_dP2-4}, one gets the Hilbert scheme of lines on $S$ as the zero-locus of a section of a locally free sheaf $\cE$ of rank $2$ over $\eP^2.$

This enriched count gives back the classical result of 10 lines on an algebraically closed field as the rank of this quadratic form. An analogy with real signed counts and $Pin^- -$structures has been made for the cubic surface in \cite{Kass_Wickelgren}. We will give an heuristic explanation of how a signed count over a real del Pezzo surface could be done, following the ideas of \cite{FK_real-lines}.

\subsection*{Real signed count heuristic}
If $X$ is a smooth del Pezzo surface defined over $\eR,$ it is known that it has at least 2 real lines (see \cite{Boitrel_dP5} for an extensive description of the lines configuration in the Petersen diagram) and is always a blow-up of $\eP^2$ in real or complex conjugate points. The real locus $X(\eR)$ is either homeomorphic to $\sharp^5 \eP^2_\eR,\ \sharp^3\eP^2_\eR$ or $\eP^2_\eR$ if $X$ contains $10,$ 4 or 2 real lines respectively. From \cite{Boitrel_dP5}, we extract all three possibilities for $X$ defined over $\eR$ and the figures (\subref{fig:figure(0)_option_Gal(kbarre/k)-action_on_Pikbarre}), (\subref{fig:figure(a)_option_Gal(kbarre/k)-action_on_Pikbarre}) and (\subref{fig:figure(b)_option_Gal(kbarre/k)-action_on_Pikbarre}). In particular, it gives us the lines that are defined over $\eR$ in red. The others are complex conjugate lines and thus come in pairs of same color. The labels in the Petersen diagrams are $E_i$ for the exceptional divisors over $p_i$ and $D_{ij}$ for the strict transforms of lines passing through the points $p_i$ and $p_j$ (with $p_i$ the blown-up points over $\bar \ek$).

\vspace{-0.2cm}
\definecolor{forestgreen}{rgb}{0,0.65,0}
\definecolor{cyan}{rgb}{0.871,0.494,0}
\begin{figure}[h]
  \centering
  \begin{subfigure}[t]{0.24\textwidth}
    \centering
    \tikzmath{
      real \a,\b;
      \a=1.5;
      \b=0.6;
    }
    \begin{tikzpicture}[every node/.style={inner sep=2ex, scale=0.6}, 	rotate=18]
      \foreach \i in {0,1,...,4}{
        \path (\i*72:\a) node[draw, circle] (N-\i) {};
        \path (\i*72:\b) node[draw, circle] (Q-\i) {};
        \draw (N-\i) -- (Q-\i);
      }
      \draw (N-0) -- (N-1) -- (N-2) -- (N-3) -- (N-4) -- (N-0);
      \draw (Q-0) -- (Q-2) -- (Q-4) -- (Q-1) -- (Q-3) -- (Q-0);
      \path (N-0.center) node[red]{$E_2$};
      \path (N-0.north east) node[above]{$\mathbf{\mp}$};
      \path (N-1.center) node[red] {$D_{12}$};
      \path (N-1.north) node[above]{$\mathbf{+}$};
      \path (N-2.center) node[red] {$E_1$};
      \path (N-2.west) node[above]{$\mathbf{\pm}$};
      \path (N-3.center) node[red] {$D_{14}$};
      \path (N-3.west) node[above]{$\mathbf{+}$};
      \path (N-4.center) node[red] {$D_{23}$};
      \path (N-4.east) node[above]{$\mathbf{+}$};
      \path (Q-0.center) node[red] {$D_{24}$};
      \path (Q-0.east) node[above]{$\mathbf{-}$};
      \path (Q-1.center) node[red] {$D_{34}$};
      \path (Q-1.north east) node[above]{$\mathbf{+}$};
      \path (Q-2.center) node[red] {$D_{13}$};
      \path (Q-2.north west) node[above]{$\mathbf{-}$};
      \path (Q-3.center) node[red] {$E_4$};
      \path (Q-3.west) node[above]{$\mathbf{\mp}$};
      \path (Q-4.center) node[red] {$E_3$};
      \path (Q-4.east) node[above]{$\mathbf{\pm}$};
    \end{tikzpicture}    	
    \caption{$\rho(\Galois(\eC/\eR))=\lbrace \Id \rbrace$}
    \label{fig:figure(0)_option_Gal(kbarre/k)-action_on_Pikbarre}
  \end{subfigure}
  \hfill
  \begin{subfigure}[t]{0.24\textwidth}
    \centering
    \tikzmath{
      real \a,\b;
      \a=1.5;
      \b=0.6;
    }
    \begin{tikzpicture}[every node/.style={inner sep=2ex, scale=0.6}, 	rotate=18]
      \foreach \i in {0,1,...,4}{
        \path (\i*72:\a) node[draw, circle] (N-\i) {};
        \path (\i*72:\b) node[draw, circle] (Q-\i) {};
        \draw (N-\i) -- (Q-\i);
      }
      \draw (N-0) -- (N-1) -- (N-2) -- (N-3) -- (N-4) -- (N-0);
      \draw (Q-0) -- (Q-2) -- (Q-4) -- (Q-1) -- (Q-3) -- (Q-0);
      \path (N-0.center) node[red]{$E_2$};
      \path (N-0.north east) node[above]{$\mathbf{\mp}$};
      \path (N-1.center) node[red] {$D_{12}$};
      \path (N-1.north) node[above]{$\mathbf{+}$};
      \path (N-2.center) node[red] {$E_1$};
      \path (N-2.west) node[above]{$\mathbf{\pm}$};
      \path (N-3.center) node[blue] {$D_{14}$};
      \path (N-4.center) node[] {$D_{23}$};
      \path (Q-0.center) node[] {$D_{24}$};
      \path (Q-1.center) node[red] {$D_{34}$};
      \path (Q-1.north east) node[above]{$\mathbf{+}$};
      \path (Q-2.center) node[blue] {$D_{13}$};
      \path (Q-3.center) node[forestgreen] {$E_4$};
      \path (Q-4.center) node[forestgreen] {$E_3$};
    \end{tikzpicture}    	
    \caption{$\rho(\Galois(\eC/\eR)) \simeq \eZ/2\eZ$}
    \label{fig:figure(a)_option_Gal(kbarre/k)-action_on_Pikbarre}
  \end{subfigure}
  \hfill
  \begin{subfigure}[t]{0.24\textwidth}
    \centering
    \tikzmath{
      real \a,\b;
      \a=1.5;
      \b=0.6;
    }
    \begin{tikzpicture}[every node/.style={inner sep=2ex, scale=0.6}, 	rotate=18]
      \foreach \i in {0,1,...,4}{
        \path (\i*72:\a) node[draw, circle] (N-\i) {};
        \path (\i*72:\b) node[draw, circle] (Q-\i) {};
        \draw (N-\i) -- (Q-\i);
      }
      \draw (N-0) -- (N-1) -- (N-2) -- (N-3) -- (N-4) -- (N-0);
      \draw (Q-0) -- (Q-2) -- (Q-4) -- (Q-1) -- (Q-3) -- (Q-0);
      \path (N-0.center) node[blue]{$E_2$};
      \path (N-1.center) node[red] {$D_{12}$};
      \path (N-1.north) node[above]{$\mathbf{+}$};
      \path (N-2.center) node[blue] {$E_1$};
      \path (N-3.center) node[forestgreen] {$D_{14}$};
      \path (N-4.center) node[forestgreen] {$D_{23}$};
      \path (Q-0.center) node[cyan] {$D_{24}$};
      \path (Q-1.center) node[red] {$D_{34}$};
      \path (Q-1.north east) node[above]{$\mathbf{+}$};
      \path (Q-2.center) node[cyan] {$D_{13}$};
      \path (Q-3.center) node[] {$E_4$};
      \path (Q-4.center) node[] {$E_3$};
    \end{tikzpicture}    	
    \caption{$\rho(\Galois(\eC/\eR)) \simeq \eZ/2\eZ$}
    \label{fig:figure(b)_option_Gal(kbarre/k)-action_on_Pikbarre}
  \end{subfigure}  \hfill
  \caption{Configurations of lines and $Pin^-$structure}\label{FIG - ensemble des figures}
 \end{figure}

As highlighted in \cite[Section 2.4]{FK_real-lines}, one can define a $Pin^- -$structure on $X(\eR)$ as follows.
\begin{defintro}[$Pin^- -$structure]\label{DEF - pin structure}
If $X$ is a smooth del Pezzo surface defined over $\eR,$ a $Pin^- -$structure on $X(\eR)$ is an application 
\[\varphi : H_1(X(\eR),\eZ/2\eZ) \to \eZ/4\eZ\]
such that $\varphi(x+y) = \varphi(x) + \varphi(y) +2(x\cdot y)$, with $x\cdot y$ the intersection product.
\end{defintro}
In particular, for $\cml$ a real line of $X,$ if we note $[\cml]$ the class of $\cml(\eR)$ in $H_1(X(\eR),\eZ/2\eZ)$, we get
\[0 = \varphi(2[\cml]) = 2\varphi([\cml]) + 2 [\cml]^2 = 2\varphi([\cml]) + 2 \]
in $\eZ/4\eZ.$ Thus, $\varphi([\cml])$ has to be 1 or 3.
\begin{defintro}[Hyperbolic and elliptic line]\label{DEF - hyperbolic and elliptic line}
A real line $\cml$ in a smooth del Pezzo surface of degree 5 is called
\begin{itemize}
    \item $\varphi-$hyperbolic if $\varphi([\cml]) = 1,$
    \item $\varphi-$elliptic if $\varphi([\cml]) = 3$
\end{itemize}
in $\eZ/4\eZ.$
\end{defintro}
A signed count of the real lines of $X$, with respect to $\varphi,$ is
\[\Sigma_\varphi = \sum_{\cml\  \text{real}} (-1)^{\frac{\varphi([\cml])-1}{2}} = \sharp \accol{\text{real hyperbolic lines}} - \sharp \accol{\text{real elliptic lines}}.\]

If we impose the normalisation $\varphi(w_1(X(\eR)) = 1$ for the first Stiefel-Whitney class and the vanishing of classes of \emph{real roots}\ as in \cite{FK_real-lines} there is only one possible $\varphi$ if $X(\eR)$ contains two lines and two possible $Pin^- -$structures when there are 4 or 10 lines. In every case, we get 
\[\Sigma_\varphi = \sharp \accol{\text{real hyperbolic lines}} - \sharp \accol{\text{real elliptic lines}} = 2.\]

To clarify a bit what happens, let us work out the 4 real lines case. In this case, $H_1(X(\eR),\eZ/2\eZ)$ is spanned by the classes $h,$ coming from the hyperplane section of $\eP^2$ and $e_1$ and $e_2$ coming from the exceptional divisors of the two real points blown-up. Then $w_1(X(\eR)) = h + e_1 +e_2$ and we impose its image to be 1. The 4 real lines are the two exceptional divisors, the strict transform of the line passing through the two real blown-up points ($D_{12}$ in Figure \subref{fig:figure(a)_option_Gal(kbarre/k)-action_on_Pikbarre}) and the strict transform of the line passing through the pair of complex conjugate points $D_{34}$ in Figure \subref{fig:figure(a)_option_Gal(kbarre/k)-action_on_Pikbarre}). Their classes are $h+e_1 +e_2$ and $h$ respectively. The image of $\varphi$ is controlled by the values $\varphi(h),\varphi(e_1),\varphi(e_2)$ in $\accol{1,3}$ and the conditions:
\[\varphi(h+e_i) = \varphi(h)+\varphi(e_i),\ \varphi(e_1+e_2) = \varphi(e_1)+\varphi(e_2).\]

With the normalisation condition, we get $\varphi(h) +\varphi(e_1)+\varphi(e_2) = 1$ and the root condition (c.f. \cite[Section 2.1 and Section 3.2]{FK_real-lines}) imposes $\varphi(e_1+e_2) = 0.$ It leaves us with the choice of $\varphi(e_1)$ determining everything. In Figure \ref{FIG - ensemble des figures}, the $Pin^- -$structure values are shown as the signs on top of the real lines. The sign $+$ corresponds to $+1$ and the hyperbolic lines, the sign $-$ corresponds to $-1 = 3$ in $\eZ/4\eZ$ and the elliptic lines.

For the two possibilities in the configuration with 10 real lines, only the signs of the exceptional divisors vary. It does not change the count and it is compatible with the two possibilities when there are 4 real lines. Indeed, passing from the case in figure (\subref{fig:figure(0)_option_Gal(kbarre/k)-action_on_Pikbarre}) to (\subref{fig:figure(a)_option_Gal(kbarre/k)-action_on_Pikbarre}), the signs for the real lines coincide and the lines that become complex conjugate (pairs of same color) come as pairs of 1 and $-1$. However, it does not coincide with the conjugate pairs in case (\subref{fig:figure(b)_option_Gal(kbarre/k)-action_on_Pikbarre}).

\subsection*{Layout of the paper}

In Section \ref{SEC - motivic setup} we recall the motivic setup form \cite{Cha_dP2-4}. Here we also added a part on $G-$equivariant six functors formalism as it will be used for the quadratic degree computation. In Section \ref{SEC - degre quadratique} we recall identifications that are often implicitly used when computing the quadratic degree of an Euler class in a Chow-Witt group.

We treated the geometry and an embedding of del Pezzo surfaces of degree 5 in the introduction. The main difficulty is then to compute the quadratic degree of the Euler class. We choose to compute this degree via the Atiyah-Bott formula from \cite{LevineAB24}. It needs a $N-$action on the $\eP^2$ of lines on $V_5$ and a $N-$linearisation on $\cE.$ The $N-$action and Atiyah-Bott formula to recover a quadratic degree are explained in Section \ref{SEC - cohom Witt equivariante}. A problem arises when working over a general field: one may not have a $N-$linearisation of $\cE$. Section \ref{SEC - geometrie droites dP5} is dedicated to the geometrical considerations. We need to embed the Hilbert scheme $H$ of lines on $S$ in the $\eP^2$ of lines on $V_5.$ We also explain the problem that may arise to define the action and an embedding of these Hilbert schemes in Grassmannians. This last embedding leads to Section \ref{SEC - calcul Schubert oriente} where we realise $H$ as an oriented Schubert calculus computation. It does not give the quadratic degree but an independence of the computation with respect to the $\ek-$form of $H$ and $S$ at hand. It gives enough justification to compute the Bott residue formula in Section \ref{SEC - calcul Witt schema Hilbert} and state that the result is valid in any case.

\subsection*{Conventions}

We work over a perfect field $\ek$ of characteristic different from 2. The base schemes will be quasi-compact and quasi-separated and we call \emph{s-morphism} a separated morphism of finite type. The convention for vector bundles and projective bundles is the contravariant one (i.e. $\eV(\cE) := \Spec{\Symalg{\cE}}$ and $\eP(\cE) := \Proj{\Symalg{\cE}}$ for $\cE$ a locally free sheaf over a scheme $X$). We will also use curvy letters to refer to locally free sheaves and straight ones for the associated vector bundle.

\subsection*{Acknowledgments}
The author is very thankful to his PhD advisors Adrien Dubouloz and Jan Nagel for the fruitful discussions and comments while writing this paper as well as the opportunity to accomplish it as part of his PhD.

\addtocontents{toc}{\protect\setcounter{tocdepth}{2}}
\section{Motivic setup}\label{SEC - motivic setup}

This first reminder part is mostly taken from \cite{DJK21} where the notations were adapted, see also \cite{Cha_dP2-4}.

\subsection{Stable homotopy and six functors}\label{SSEC - homotopie stable et six foncteurs}

In the following, the base schemes will be quasi-compact and quasi-separated and we call \emph{s-morphism} a separated morphism of finite type.

Let $S$ be a quasi-compact and quasi-separated noetherian scheme of finite dimension. Then we write $\StableHo{S}$ the stable $\infty-$category of motivic spectra.

\begin{prop}
Let $S$ be a quasi-compact and quasi-separated noetherian scheme of finite dimension. Then the category $\StableHo{S}$ is equivalent, in terms of triangulated category, to the $\Au-$homotopy category originally built by Voevodsky.
\end{prop}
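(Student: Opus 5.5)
This is a comparison of foundational models rather than a new result, so I would assemble it from known identifications instead of building anything by hand. I would fix the model of $\StableHo{S}$ used in \cite{DJK21}: the stabilisation, with respect to $\eP^1$ (equivalently the Thom space of the trivial line bundle), of the $\infty$-category of motivic spaces over $S$. These motivic spaces are presheaves of spaces on smooth $S$-schemes that satisfy Nisnevich descent and are $\Au$-invariant. Voevodsky's triangulated $\Au$-homotopy category is built by the opposite route: first a model structure on $\eP^1$-spectra of simplicial presheaves (Jardine's motivic model structure, or Voevodsky's original construction), then passage to the homotopy category. The claim to prove is that the homotopy category of the $\infty$-category is equivalent to this triangulated category, with the triangulations matching.

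\textbf{Step 1: unstable comparison.} The Morel–Voevodsky $\Au$-homotopy theory is presented by a combinatorial simplicial model category, obtained as a left Bousfield localisation of the Nisnevich-local model structure at the maps $X\times\Au\to X$. By Lurie (HTT, A.3.7 and 5.5.4) the underlying $\infty$-category of such a localisation is the corresponding accessible localisation of the presheaf $\infty$-category. Since the Nisnevich topology is generated by a cd-structure, descent can be tested on distinguished squares. This identifies the underlying $\infty$-category of the model with the category of motivic spaces, compatibly with the pointed versions.

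\textbf{Step 2: stabilisation.} I would invoke Robalo's theorem: the $\infty$-category underlying the model category of symmetric (or Bousfield–Friedlander) $\eP^1$-spectra is the formal inversion of $\eP^1$ in presentably symmetric monoidal $\infty$-categories. That formal inversion agrees with the colimit of $\Omega_{\eP^1}$-towers because $\eP^1$ is symmetric, meaning the cyclic permutation on $(\eP^1)^{\wedge 3}$ is $\Au$-homotopic to the identity. Taking homotopy categories, the homotopy category of a stable $\infty$-category carries a canonical triangulation. This triangulation coincides with the one on the homotopy category of a stable model category, since both come from the same cofibre sequences and the same suspension functor. The noetherian, finite-dimensional hypothesis on $S$ is what both constructions require; on the model side it is what guarantees the Nisnevich-local model structure behaves well, for instance through compact generation.

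\textbf{Main obstacle.} The delicate point is Step 2, where one must check that Voevodsky's original spectra agree with the symmetric or Bousfield–Friedlander model, so that Robalo's comparison actually applies. I would cite Jardine's comparison of motivic symmetric spectra with Bousfield–Friedlander spectra for this. In the paper itself I would reduce the whole proof to citations of \cite{DJK21} and Robalo, since the argument is purely foundational.
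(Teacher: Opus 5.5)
Your outline is correct and consistent with the paper, which states this proposition without proof as a recalled foundational fact (the section is taken from \cite{DJK21}); the standard justification is exactly your chain of Lurie's comparison for the unstable model, Robalo's stabilisation theorem using the symmetry of $\eP^1$, and Jardine's comparison of symmetric and Bousfield--Friedlander spectra. One correction: the noetherian finite-dimensional hypothesis is really used in Step 1, not for compact generation. Morel--Voevodsky's stalkwise (hypercomplete) Nisnevich-local equivalences coincide with descent tested on distinguished squares only because the Nisnevich cd-structure is bounded (equivalently, the relevant Nisnevich $\infty$-topos is hypercomplete).
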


The unit for $\otimes$ in $\StableHo{S}$ is noted $\eS_S$ and is the stabilisation of $\eUn_S$.

\begin{prop}[\cite{Ayoub_6foncteurs}]
For every morphism $f : T \to S$, we get a pair of adjoint functors:
\[\xymatrix{f^\ast : \StableHo{S} \ar@<+2pt>[r] & \StableHo{T} : f_\ast, \ar@<+2pt>[l]}\]
called inverse image (by left Kan extension) and direct image. Moreover, if $f$ is a s-morphism, then we get another adjoint pair:
\[\xymatrix{f_! : \StableHo{T} \ar@<+2pt>[r] & \StableHo{S} : f^!, \ar@<+2pt>[l]}\]
called direct (and inverse) exceptional image.
\end{prop}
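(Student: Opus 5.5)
The plan is to build the two adjunctions in increasing order of difficulty. First $f^\ast \dashv f_\ast$ for an arbitrary morphism, then $f_! \dashv f^!$ for s-morphisms. Throughout I would work with $\StableHo{-}$ as a presentable stable $\infty$-category, obtained by a left Bousfield localization followed by $\eP^1$-stabilization of presheaves of spaces on smooth schemes. With that presentation, the existence of right adjoints will always follow from the adjoint functor theorem once the left adjoints are known to preserve colimits.

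For $f : T \to S$, I define $f^\ast$ on the unstable level as the left Kan extension along the functor $\mathrm{Sm}_S \to \mathrm{Sm}_T$, $X \mapsto X \times_S T$.
\begin{enumerate}
\item I check that this preserves Nisnevich covering sieves and the projections $X \times \Au \to X$. Hence it descends to the $\Au$-Nisnevich localization.
\item It is symmetric monoidal and sends $\eP^1_S$ to $\eP^1_T$. So it extends uniquely to the stabilizations, where $\eS_S \mapsto \eS_T$.
\item Being a colimit-preserving functor between presentable categories, it admits a right adjoint $f_\ast$.
\end{enumerate}
For $f$ smooth, the same argument also gives a left adjoint $f_\sharp$ of $f^\ast$, induced by $X \mapsto X$ viewed over $S$.

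For an s-morphism $f$, I would use Nagata's compactification theorem to factor $f = p \circ j$, with $j$ an open immersion and $p$ proper, and set $f_! := p_\ast j_\sharp$. Its right adjoint is then $f^! := j^\ast p^!$, where $p^!$ is the right adjoint of $p_\ast$. This $p^!$ exists because $p_\ast$ preserves colimits when $p$ is proper: I would get that from the localization theorem of Morel–Voevodsky together with the compact generation of $\StableHo{-}$.

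The main obstacle is showing that $f_!$ is independent of the chosen compactification and is functorial in a coherent way, that is, $(gf)_! \simeq g_! f_!$ with all higher compatibilities. The key inputs I would establish first are:
\begin{itemize}
\item smooth base change $g^\ast p_\ast \simeq p'_\ast g'^\ast$ for $p$ proper;
\item the support property, namely that $p_\ast j_\sharp \simeq q_\ast k_\sharp$ for two compactifications of the same open immersion followed by proper maps.
\end{itemize}
Both reduce, via localization, to the case of closed immersions and of $\eP^n$-bundles. Given these, I would glue the two pseudofunctors (open immersions with $j_\sharp$, proper maps with $p_\ast$) into a single one on s-morphisms using Ayoub's cross-functor formalism, or in the $\infty$-setting a Liu–Zheng/Gaitsgory–Rozenblyum extension from a correspondence category. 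In the paper I would simply cite \cite{Ayoub_6foncteurs} and \cite{DJK21} for this coherence step, rather than redo it.
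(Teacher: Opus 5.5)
The paper gives no argument here beyond citing Ayoub. Your outline is the standard construction behind that citation: $f^\ast$ by left Kan extension and $\eP^1$-stabilization, right adjoints from the adjoint functor theorem, and $f_! = p_\ast j_\sharp$ via a Nagata compactification, with the coherence step deferred to the same literature. So this is essentially the paper's approach.

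Three small precisions. First, $f_\ast$ preserves colimits for every $f$, since $f^\ast$ preserves the compact generators, so properness is not needed for $p^!$ to exist. Second, smooth base change $g^\ast p_\ast \simeq p'_\ast g'^\ast$ holds formally for every $p$; the genuinely non-formal inputs are localization and the $\eP^n$-bundle case. Third, you also need Chow's lemma to pass from Ayoub's quasi-projective setting to general s-morphisms.
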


\begin{tho}[Six functors on $\StableHo{S}$, \cite{Ayoub_6foncteurs}]\label{THO - formalisme six foncteurs SH}
The six operations $(\otimes,\underline{\operatorname{Hom}},f^\ast,f_\ast,f_!,f^!)$ satisfy to the six functors formalism:
\begin{enumerate}
\item For every morphism $f$, $f^\ast$ are symmetric monoidal.
\item There is a natural transformation $f_! \to f_\ast$ invertible when $f$ is proper.
\item There is an invertible natural transformation $f^\ast \to f^!$ when $f$ is an open immersion.
\item There is a canonical isomorphism
\[\eE \otimes f_!(\eF) \to f_!(f^\ast(\eE)\otimes \eF)\]
for all s-morphisms $f : T \to S$ and $\eE$ in $\StableHo{S}$, $\eF$ in $\StableHo{T}$.
\item For every Cartesian square
\[\xymatrix{T' \ar[r]^g \ar[d]_q & S' \ar[d]^p \\ T\ar[r]_f & S,}\]
if $f$ and $g$ are s-morphisms, then we get canonical isomorphisms:
\[p^\ast f_! \to g_!q^\ast, \qquad q_\ast g^! \to f^!p_\ast.\]
\end{enumerate}
\end{tho}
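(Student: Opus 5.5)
This statement is the six functors formalism of Ayoub, which the paper imports rather than proves. The plan is therefore not a fresh proof. It is to assemble the formalism from its standard ingredients in the order Ayoub (and Cisinski--Déglise) build it, and to point out where each clause comes from.

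\emph{Construction of $f_!$.} First, for any morphism $f$, the functor $f^\ast$ is defined by left Kan extension on presheaves of spaces on smooth $S$-schemes, followed by $\Au$-localisation and $\eP^1$-stabilisation. Since $f^\ast$ preserves products and representables, it is symmetric monoidal, which gives (1); its right adjoint is $f_\ast$. Next comes the core input, the localisation property of Morel--Voevodsky. For a closed immersion $i$ with open complement $j$ there is a cofibre sequence $j_\sharp j^\ast \to \mathrm{id} \to i_\ast i^\ast$, and $i_\ast$ is fully faithful. For a s-morphism $f$, I then use Nagata compactification to factor it as $f = p\circ j$ with $j$ an open immersion and $p$ proper, and set $f_! := p_\ast j_\sharp$.

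\emph{Well-definedness of $f_!$ (main obstacle).} I must show that $f_!$ does not depend on the chosen factorisation and is functorial. Following Deligne's argument in SGA4, this reduces to one support property. For a Cartesian square of an open immersion against a proper map, the exchange map $j_\sharp q_\ast \to p_\ast j'_\sharp$ must be invertible. This in turn reduces, via localisation, to the proper base change theorem for $p$ proper. I would prove proper base change by reducing to two cases: $p$ a closed immersion, which follows from localisation, and $p$ the projection $\eP^n_S\to S$. The projective-space case uses the stability of $\StableHo{S}$ under Thom spectra of vector bundles and the ambidexterity/purity isomorphism $p_\ast \simeq p_\sharp \Sigma^{-T_p}$, where $\Sigma^{-T_p}$ denotes desuspension by the Thom spectrum of the tangent bundle. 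I expect this step to be the hardest. In the $\infty$-categorical setting, the coherence across compositions is handled by Liu--Zheng or Gaitsgory--Rozenblyum style extension of functors from proper and open morphisms.

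\emph{Remaining clauses.} Once $f_!$ is well defined, clause (2) is the natural map $j_\sharp \to j_\ast$ composed with $p_\ast$. It is invertible when $f$ is proper, since then one can take $j=\mathrm{id}$. For clause (3), an open immersion has $f_! = f_\sharp$, and adjunction gives $f^! \simeq f^\ast$. The right adjoint $f^!$ exists by the adjoint functor theorem, because $f_!$ preserves colimits, the categories being presentable. Clause (4), the projection formula, holds for $j_\sharp$ by the smooth projection formula. For proper $p$ it follows from the purity isomorphism in the projective-space case together with localisation for closed immersions. Clause (5), base change $p^\ast f_! \simeq g_! q^\ast$, combines smooth base change for $j_\sharp$ with proper base change for $p_\ast$. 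Its dual form $q_\ast g^! \simeq f^! p_\ast$ is then obtained by passing to right adjoints.
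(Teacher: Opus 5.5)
Your outline is the standard construction: localisation, compactification, Deligne's gluing via the support property, proper base change through the purity isomorphism for $\eP^n_S \to S$, then formal deductions of (1)--(5). That is what the paper implicitly relies on, since it gives no argument of its own and simply cites Ayoub. One step, however, does not cover the generality you claim.

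You reduce proper base change, and in clause (4) the projection formula for proper maps, to a closed immersion followed by the projection $\eP^n_S \to S$. Such a factorisation exists only for \emph{projective} morphisms. A Nagata compactification $f = p \circ j$ of an arbitrary s-morphism only provides a proper $p$. Deligne's independence-of-compactification argument needs the support property for the proper comparison maps between compactifications, which again need not be projective. Ayoub avoids this by working with quasi-projective schemes, where compactifications can be taken projective; the paper's s-morphisms are arbitrary separated morphisms of finite type.

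The missing ingredient is the passage from projective to proper morphisms via Chow's lemma and noetherian induction, as Cisinski--Déglise do.
\begin{enumerate}
\item For $f : X \to S$ proper, choose $\pi : X' \to X$ projective with $f\pi$ projective, and $\pi$ an isomorphism over a dense open $j : U \hookrightarrow X$. Let $i$ be the closed complement.
\item For $M$ in $\StableHo{X}$, the localisation triangle $j_\sharp j^\ast M \to M \to i_\ast i^\ast M$ reduces base change for $f_\ast$ to two pieces.
\item The piece $f_\ast i_\ast = (fi)_\ast$ is handled by noetherian induction.
\item For the other piece, the support property for the projective map $\pi$ gives $j_\sharp j^\ast M \simeq \pi_\ast j'_\sharp N$. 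Here $j' : \pi^{-1}(U) \hookrightarrow X'$ and $N$ is the transport of $j^\ast M$ along $\pi^{-1}(U) \simeq U$.
\item Hence $f_\ast j_\sharp j^\ast M \simeq (f\pi)_\ast j'_\sharp N$, which falls under the projective case combined with smooth base change for $j'_\sharp$.
\end{enumerate}
The same decomposition gives the projection formula for proper maps. With this step added, your sketch covers the statement as formulated for separated morphisms of finite type.
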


\begin{deff}[Suspension by a locally free sheaf]\label{DEF - suspension par faisceau localement libre}
To a locally free sheaf $\cE$ of finite rank over $S$, we can associate an auto-equivalence 
\[\Sigma^\cE : \StableHo{S} \to \StableHo{S}\]
called the suspension by $\cE.$
\end{deff}

\begin{prop}[Thom space of a locally free sheaf]\label{PROP - def foncteur Thom}
Let us define $\operatorname{Th}$ the functor which, to a locally free sheaf $\cE$ of finite rank over $S$, associates $\Thom[S]{\cE} := \Sigma^\cE(\eS_S)\in \Picard{\StableHo{S}}.$ It can be naturally extended as a morphism of $\cE_\infty$-groups:
\[\operatorname{Th}: K(-) \to \Picard{\StableHo{-}}.\]
\end{prop}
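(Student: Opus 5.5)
This is the standard construction from Ayoub's and Cisinski–Déglise's framework, which the paper recalls from \cite{DJK21}. I would proceed in three steps.

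\textbf{Step 1: the functor on locally free sheaves.} For a locally free sheaf $\cE$ of finite rank over $S$, let $p\colon \eV(\cE)\to S$ be the projection and $s\colon S\to \eV(\cE)$ the zero section. Define
\[\Sigma^\cE := p_\sharp s_\ast \simeq p_! s_!\, .\]
Here $p_\sharp$ exists as the left adjoint of $p^\ast$ because $p$ is smooth, and $s_\ast \simeq s_!$ because $s$ is a closed immersion, so item (2) of Theorem \ref{THO - formalisme six foncteurs SH} applies. Homotopy invariance and localisation give an inverse $s^! p^\ast$, so $\Sigma^\cE$ is an auto-equivalence. Hence $\Thom[S]{\cE} = \Sigma^\cE(\eS_S)$ is $\otimes$-invertible, because $\Sigma^\cE(-)\simeq \Thom[S]{\cE}\otimes -$ by the projection formula (4).

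\textbf{Step 2: additivity.} For a short exact sequence $0\to\cE'\to\cE\to\cE''\to 0$, I would construct an equivalence
\[\Thom[S]{\cE}\simeq \Thom[S]{\cE'}\otimes\Thom[S]{\cE''},\]
first for split sequences, using the Cartesian product $\eV(\cE'\oplus\cE'')=\eV(\cE')\times_S\eV(\cE'')$ and base change (5). The general case reduces to the split one via the affine bundle of splittings, which is $\Au$-contractible. These equivalences are natural in isomorphisms and compatible with pullback $f^\ast$, again by base change (5). This gives a functor from the groupoid of locally free sheaves with exact sequences to the Picard $\infty$-groupoid $\Picard{\StableHo{S}}$, compatible with pullbacks.

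\textbf{Step 3: extension to $K$-theory.} This is the main obstacle. One has to produce coherent higher compatibilities: associativity and commutativity data for iterated filtrations, the full $\cE_\infty$-structure, and not just the isomorphisms on objects. I would first promote the assignment to a symmetric monoidal functor from the Waldhausen/exact category of vector bundles, with exact sequences as cofibration sequences, into the Picard $\infty$-groupoid. The additivity from Step 2 makes it additive in the sense of the universal property of $K$-theory, i.e.\ Barwick's universal property of algebraic $K$-theory of exact $\infty$-categories. Since the target is grouplike, the functor then factors uniquely through the group completion $K(S)$. This is exactly the route of Bachmann–Hoyois, which \cite{DJK21} cites. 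Naturality in $S$ follows from the compatibility with $f^\ast$ established in Step 2, and it yields the morphism of $\cE_\infty$-groups $\operatorname{Th}\colon K(-)\to\Picard{\StableHo{-}}$.

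Since the paper just recalls this, I would invoke \cite{DJK21} and Bachmann–Hoyois for the coherence in Step 3 rather than redo it.
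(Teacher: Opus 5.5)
Your overall plan matches the paper, which gives no argument for this proposition: it simply recalls it from \cite{DJK21}. In both cases the $E_\infty$-coherence is ultimately taken from the literature. However, Step 1 contains a false claim that you must remove: $p_\sharp s_\ast\simeq p_!s_!$. Since $p\circ s=\mathrm{id}_S$, functoriality of exceptional direct image gives $p_!s_!\simeq(ps)_!=\mathrm{id}$. This identification would therefore make every $\Thom[S]{\cE}$ equal to $\eS_S$. Applying item (4) to $p_!s_!$, as your citation of (4) suggests, would literally ``prove'' exactly that.

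The functors $p_\sharp$ and $p_!$ are not isomorphic. By purity $p_\sharp\simeq p_!\circ\Sigma^{\cT_p}$, and the relative tangent bundle of $p$ is the pullback of $\eV(\cE)$ itself. So the discrepancy between them is precisely the twist you are constructing. The repair is to keep $\Sigma^\cE:=p_\sharp s_\ast$ (with $s_\ast\simeq s_!$). Then obtain $\Sigma^\cE(-)\simeq\Thom[S]{\cE}\otimes-$ by combining two things:
\begin{itemize}
\item item (4) for the closed immersion $s$;
\item the smooth projection formula $p_\sharp(p^\ast A\otimes B)\simeq A\otimes p_\sharp B$.
\end{itemize}
With that change, Step 1 and the object-level isomorphisms of Step 2 are fine.

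Your Step 3 is also not quite the construction you are citing. The source behind \cite{DJK21} (Bachmann--Hoyois) does not feed non-split additivity into Barwick's universal property. That property characterises $K$ among additive invariants of exact $\infty$-categories. It does not by itself produce a map from $K$ of a single category into a Picard $\infty$-groupoid. For that you would need a coherent map out of the $S_\bullet$-construction, which is exactly the higher data you postpone.

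The actual construction only needs $\cE\mapsto\Thom[S]{\cE}$ as a symmetric monoidal functor $(\mathrm{Vect}(S)^{\simeq},\oplus)\to(\Picard{\StableHo{S}},\otimes)$, natural in $S$. It then proceeds as follows:
\begin{itemize}
\item the functor factors through the group completion because the target is grouplike;
\item on affine schemes, connective $K$ is that group completion, since exact sequences of projective modules split;
\item $K$ satisfies Zariski descent (Thomason--Trobaugh) and $\Picard{\StableHo{-}}$ is a Zariski sheaf, so the map extends uniquely to all qcqs schemes.
\end{itemize}
In this route, the isomorphism $\Thom[S]{\cE}\simeq\Thom[S]{\cE'}\otimes\Thom[S]{\cE''}$ for a non-split sequence (the remark after the proposition in the paper) is an output rather than an input. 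Your torsor-of-splittings argument is therefore not needed.
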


\begin{rmq}
Rather than considering vector bundles, when possible we chose to keep the suspension by Thom spaces of locally free sheaves. Thus, we replace the $K-$theory of vector bundles by the one of locally free sheaves (Grothendieck $K_0$ group).
\end{rmq}

\begin{rmq}
Let $\cE'\to \cE \to \cE''$ be a short exact sequence of locally free sheaves. It canonically gives rise to an isomorphism $\qscal{\cE} \simeq \qscal{\cE'} + \qscal{\cE''}$ in $K_0(S)$. We then get $\Thom[S]{\cE} \simeq \Thom[S]{\cE'}\otimes\Thom[S]{\cE''}$ in $\StableHo{S}$.
\end{rmq}

\begin{tho}[Homotopic purity]\label{THO - purete version six foncteurs}
If $f$ is a smooth s-morphism, there exists a canonical isomorphism of functors 
\[\fp_f : \Sigma^{\cT_f}f^\ast \to f^!\]
with $\cT_f$ the relative tangent sheaf of $f$.
\end{tho}

\begin{rmq}
If a $s-$morphism $f$ is a smoothable local complete intersection (i.e. admits a global factorisation into a regular closed immersion followed by a smooth morphism) between s-schemes, then there is a purity isomorphism
\[\fp_f : \Sigma^{\cL_f}f^\ast \to f^!\]
with $\cL_f$ the virtual tangent sheaf associated to the conormal complex of $f$ (c.f. \cite[Paragraph 4.3.1]{DJK21}).
\end{rmq}

\subsection{Universal Euler class}\label{SSEC - classe Euler universelle}

Let us recall the characterisation of the Euler class that was given in \cite[Section 2]{Cha_dP2-4} using only the tools from six functor formalism.\\

Let $X \to S$ be a smooth s-morphism and a locally free sheaf $\cE$ of constant rank $r$ on $X$. We then get schemes morphisms $p : E = \eV(\cE) \to X$ and $s : X \to E$, the projection from the vector bundle and the zero section, respectively. The virtual tangent sheaf to the zero section $\cL_s$ is actually
the conormal sheaf $(\cE/\cE^{2})$.
Moreover, since $\Symalg{\cE} \simeq \Symalg{\cL_s}$ it gives rise to the purity isomorphism \ref{THO - purete version six foncteurs}: $\Sigma^{\cE} s^\ast \isomr s^!$.

\begin{deff}[Stable universal Euler class]\label{DEF - classe Euler univ stable}
Given the above notations, the counit $\epsilon$ of the adjonction $\xymatrix{s^\ast : \StableHo{E} \ar@<+2pt>[r] & \StableHo{X} : s_{\ast}, \ar@<+2pt>[l]}$, the unit $ \eta$ of the adjonction $\xymatrix{s_! : \StableHo{X} \ar@<+2pt>[r] & \StableHo{E} : s^{!}, \ar@<+2pt>[l]}$ and the purity theorem \ref{THO - purete version six foncteurs} together give the stable universal Euler class as the composition:
\[\xymatrix{e^u(\cE) : \eS_X \ar[r]^\eta & s^! s_{!}\eS_X \ar[r]^{\fp_{s}\qquad \quad } & \Thom[X]{\cE}\otimes_X s^\ast s_{\ast}\eS_X \ar[r]^{\qquad\epsilon}&\Thom[X]{\cE} .}\]
\end{deff}

\subsection{Homologies, cohomologies}

In the context of the stable homotopy and six functors formalism, we define homologies / cohomologies as morphism classes:

\begin{deff}[Homologies and cohomologies]\label{DEF - homologies et cohomologies}
Let us fix a base scheme $S$ and $\eE$ a motivic spectrum in $\StableHo{S}$. For $f : X \to S$ a morphism and $v$ in $K(X)$ (c.f. Proposition \ref{PROP - def foncteur Thom}), we define:
\begin{align*}
\text{(Borel-Moore homology)}\ &\ \eE^{BM}(X/S,v) := \ulHom[\StableHo{S}]{\eS_S,f_\ast(f^!(\eE) \otimes\Thom[X]{v})}\\
\text{(cohomology)}\ &\ \eE(X,v) := \ulHom[\StableHo{S}]{\eS_S,f_\ast(f^\ast(\eE) \otimes\Thom[X]{v})}\\
\end{align*}
\end{deff}

\begin{deff}[Euler class with coefficients]\label{DEF - Euler class with coefficients}
Let $f: X \to S$ be a smooth s-morphism and a locally free sheaf $\cE$ of rank $r$ on $X$. Then, for $\eE$ a unital theory on $S$, the Euler class \ref{DEF - classe Euler univ stable} $e^u(\cE)$ defines an element $e^\eE(\cE)$ of $\eE(X,\cE)\simeq \eE^{BM}(X/S,\cE-\cT_f).$ We call this element the Euler class with coefficients (in $\eE$) and write $e(\cE)$ if there is no ambiguity on $\eE.$
\end{deff}

\begin{prop}[Proper push-forward]\label{PROP - covariance propre}
Let $\eE$ be a theory in $\StableHo{S}.$ For every proper morphism $f : X \to Y$ of s-schemes on $S,$ there is a direct image 
\[f_\ast : \eE^{BM}(X/S,f^\ast v) \to \eE^{BM}(Y/S,v).\]
\end{prop}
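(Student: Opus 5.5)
The plan is to build $f_\ast$ out of the six functors of Theorem \ref{THO - formalisme six foncteurs SH}, by producing a natural map $f_\ast f^! \to \mathrm{id}$ relative to the bases. Let $p : X \to S$ and $q : Y \to S$ be the structure morphisms, so that $p = q\circ f$. Since $f$ is proper, item (2) gives an invertible natural transformation $f_! \simeq f_\ast$. We therefore have the counit $\epsilon_f : f_! f^! \to \mathrm{id}$ of the adjunction $(f_!, f^!)$, and composing gives a natural map
\[ f_\ast f^! \simeq f_! f^! \xrightarrow{\epsilon_f} \mathrm{id}_{\StableHo{Y}}. \]
We also need $f^!q^! \simeq p^!$. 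This follows from the pseudo-functoriality of $(-)_!$, namely $p_! \simeq q_! f_!$, by passing to right adjoints.

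Next we handle the twist. Put $\eE_Y := q^!(\eE)\otimes \Thom[Y]{v}$. We need a natural map
\[ f^!(\eE_Y) \;\longleftarrow\; f^!q^!(\eE)\otimes f^\ast\Thom[Y]{v}, \]
and in fact an isomorphism. It comes from the projection-type morphism $f^!(A)\otimes f^\ast(B) \to f^!(A\otimes B)$, which is adjoint via $(f_!, f^!)$ to
\[ f_!(f^!A \otimes f^\ast B) \simeq f_!f^!A\otimes B \to A\otimes B, \]
using the projection formula (item (4)) and then $\epsilon_f$. When $B$ is $\otimes$-invertible this map is an isomorphism; this is the case for $B = \Thom[Y]{v}$ by Proposition \ref{PROP - def foncteur Thom}. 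We also use that $f^\ast \Thom[Y]{v} \simeq \Thom[X]{f^\ast v}$, since $\operatorname{Th}$ is compatible with pullback, being natural in the base.

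Now we assemble the map. Starting from a class $\alpha : \eS_S \to p_\ast(p^!\eE\otimes \Thom[X]{f^\ast v})$, we rewrite the target as
\[ q_\ast f_\ast\bigl(f^!q^!\eE\otimes f^\ast\Thom[Y]{v}\bigr) \simeq q_\ast f_\ast f^!(\eE_Y). \]
We then post-compose with $q_\ast(f_\ast f^!\eE_Y \to \eE_Y)$. This gives an element of $\ulHom[\StableHo{S}]{\eS_S, q_\ast(q^!\eE\otimes\Thom[Y]{v})} = \eE^{BM}(Y/S,v)$. Applying $\ulHom[\StableHo{S}]{\eS_S,-}$ makes the construction a homomorphism, and it is functorial in $\eE$.

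The main obstacle is coherence: $K(-)$ must be handled as virtual objects, and the isomorphism $f^\ast\operatorname{Th}\simeq \operatorname{Th} f^\ast$ must be canonical. In practice I would follow \cite{DJK21}, where these identifications are part of the formalism. The other points are routine, since properness is used only to identify $f_!$ with $f_\ast$ and $\epsilon_f$ is then the trace map.
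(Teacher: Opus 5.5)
Your construction is correct and is essentially the standard proper covariance the paper relies on: the paper only cites \cite[Proposition 2.3.4]{Cha_dP2-4}, following \cite{DJK21}, where $f_\ast$ is built from $p^!\simeq f^!q^!$, the identification $f_!\simeq f_\ast$ for proper $f$, the counit $f_!f^!\to\mathrm{id}$, and the compatibility of Thom spaces with pullback. The only cosmetic difference is that you carry the Thom twist inside $f^!$ via $f^!(A)\otimes f^\ast(B)\simeq f^!(A\otimes B)$ for $\otimes$-invertible $B$, rather than moving it across the adjunctions as in \cite{DJK21}, and this is harmless.
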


\begin{proof}
See \cite[Proposition 2.3.4]{Cha_dP2-4}.
\end{proof}
\subsection{Orientations}\label{SSEC - orientations}
Let us recall here orientations on the spectra we need. We also stress out the difference with the relative orientability of a locally free sheaf.

\begin{prop}[Orientation of usual theories]
Let $S = \Spec{\ek}$ be the base scheme.
\begin{itemize}
    \item The Milnor-Witt cohomology spectrum is $\SL^c-$oriented in $\StableHo{\ek}$ (c.f. \cite[Remark 5.5]{Ananyevskiy_SLoriented}).
    \item The Witt cohomology spectrum (or of the $\fId^\bullet-$cohomology) is $\SL^c-$oriented in $\StableHo{\ek}$.
    \item The Milnor cohomology spectrum is $\Gl-$oriented in $\StableHo{\ek}$.
\end{itemize}
\end{prop}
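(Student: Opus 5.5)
The plan is to derive all three orientations from a single source, the $\SL^c$-orientation of the Milnor–Witt spectrum $H\widetilde{\eZ}$, and push it along ring morphisms. First I would fix the definition. An $\SL^c$-orientation of a commutative ring spectrum $\eE$ in $\StableHo{\ek}$ is a functorial choice of Thom classes $\operatorname{th}(\cE,\cL,\phi)\in \eE(\Thom[X]{\cE})$ for every locally free sheaf $\cE$ on a smooth $X$ together with an isomorphism $\phi:\det\cE\simeq \cL^{\otimes 2}$. These classes should be compatible with pullback and multiplicative for direct sums, and normalised on the trivial line bundle so that they give the suspension isomorphism. A $\Gl$-orientation is the same thing with no square-root datum, equivalently a compatible choice of Thom classes for all locally free sheaves.

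For the Milnor–Witt spectrum I would simply invoke \cite[Remark 5.5]{Ananyevskiy_SLoriented}. Concretely, the Thom class of $(\cE,\cL,\phi)$ is the image of $1$ under the Thom isomorphism
\[\widetilde{\operatorname{CH}}^0(X,\det\cE^\vee)\simeq \widetilde{\operatorname{CH}}^r_X(E,\cO),\]
twisted by $\phi$; since $\cL^{\otimes 2}$-twists are canonically trivial in Milnor–Witt $K$-theory, this gives a class in the untwisted group.

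Next comes the observation that if $\varphi:\eE\to\eF$ is a morphism of commutative ring spectra and $\eE$ is $\SL^c$-oriented, then the classes $\varphi(\operatorname{th})$ form an $\SL^c$-orientation of $\eF$. The axioms are preserved because $\varphi$ commutes with pullbacks and products and sends $1$ to $1$.

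For the Witt spectrum, and for the $\fId^\bullet$-cohomology spectrum, I would use the ring morphisms induced by the quotient maps of homotopy modules
\[\underline{K}^{MW}_\ast\to \underline{\fId}^\ast \quad\text{and}\quad \underline{K}^{MW}_\ast\to \underline{K}^{MW}_\ast[\eta^{-1}]\simeq \underline{\Witt}[\eta^{\pm1}].\]
Since the corresponding Eilenberg–MacLane spectra are obtained from $H\widetilde{\eZ}$ by $\eta$-localisation or by taking the associated $\fId^\bullet$-graded quotient, these maps lift to morphisms of ring spectra, and the transport argument applies.

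For Milnor cohomology, the forgetful map $K^{MW}_\ast\to K^M_\ast$ gives a ring morphism $H\widetilde{\eZ}\to H_M$, so $H_M$ is at least $\SL^c$-oriented. To upgrade this to a $\Gl$-orientation, I would note that in $K^M_\ast$ one has $\qscal{a}=1$, so the twist by any line bundle acts trivially. The Thom class of $\cE$ can therefore be defined without the datum $(\cL,\phi)$, and independence of $\phi$ follows because changing $\phi$ by a unit $u$ multiplies the class by $\qscal{u}=1$. Alternatively, one can observe that $H_M$ is a module over motivic cohomology $H\eZ$, which is $\Gl$-oriented via the first Chern class in $\Chowct[1]{-}\simeq\Picard{-}$ (Panin's theorem), and transport along that module structure.

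The step I expect to be the main obstacle is the second one. There I must check that the quotient maps to $\underline{\Witt}$ and $\underline{\fId}^\bullet$ are genuinely maps of ring spectra in $\StableHo{\ek}$, and not merely maps of homotopy modules. I would first try to use that these spectra are $H\widetilde{\eZ}$-algebras obtained by smashing localisation (inverting $\eta$), which automatically yields the ring structure, and then handle $\fId^\bullet$ via the Milnor-conjecture identification $\fId^n/\fId^{n+1}\simeq K^M_n/2$ together with the fibre square $K^{MW}=\fId^\bullet\times_{\fId^\bullet/\fId^{\bullet+1}}K^M$.
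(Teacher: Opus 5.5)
The paper does not prove this proposition. It only recalls the three facts, pointing to Ananyevskiy's Remark 5.5 for the first. Deducing the other two by transporting Thom classes along unital ring maps is therefore a different, more self-contained route, and the transport principle itself is fine: the Thom isomorphism axiom for the transported classes follows from normalisation, Zariski-local triviality and Mayer--Vietoris. Two steps, however, do not go through as you wrote them.

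\textbf{The upgrade of the Milnor spectrum from $\SL^c$ to $\Gl$.} The observation that $\qscal{u}$ maps to $1$ only shows that, when an $\SL^c$-structure $(\cL,\phi)$ exists, the transported Thom class does not depend on it. It produces no Thom class at all for a bundle admitting no such structure. For example, $\cO_{\eP^n}(1)$ has determinant not a square in $\Picard{\eP^n}\simeq\eZ$. A $\Gl$-orientation is determined by its Thom classes of line bundles, so this first route fails.

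Your alternative via $H\eZ$ is the right one, but a module structure over $H\eZ$ is not enough: a module has no unit, hence no Thom classes of its own. You need a unital ring map. If the Milnor spectrum means $H\underline{K}^M_\ast$, use the truncation $H\eZ\to\tau_{\le 0}H\eZ\simeq H\underline{K}^M_\ast$. This works because $H\eZ$ is connective for the homotopy $t$-structure with $\underline{\pi}_0(H\eZ)_\ast=\underline{K}^M_\ast$, and $\tau_{\le 0}$ is symmetric monoidal on connective spectra. Alternatively, take as Thom class the class of the zero section in $H^r_X(E,\underline{K}^M_r)$, which needs no twist because the Rost--Schmid complex of $\underline{K}^M_\ast$ is untwisted. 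If the Milnor spectrum means $H\eZ$ itself, the third item is just its standard $\Gl$-orientation.

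\textbf{The Witt/$\fId^\bullet$ step you flagged as the main obstacle.} The same formal fact settles it, and neither the Milnor conjecture nor the fibre square is needed. The homotopy $t$-structure is compatible with $\otimes$, so the inclusion of the heart is lax symmetric monoidal. Hence commutative algebras in homotopy modules give commutative ring spectra, and algebra maps give ring maps. Since $\sKW_\ast=\underline{K}^{MW}_\ast/(h)$ is the quotient of the unit of the heart by an ideal, $H\underline{K}^{MW}_\ast\to H\sKW_\ast$ is a ring map. The identification $\sKW_n\simeq\underline{\fId}^n$ only serves to name the target.

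The $\fId^\bullet$-spectrum is not an $\eta$-localisation; only $\underline{K}^{MW}_\ast[\eta^{-1}]\simeq\underline{\Witt}[\eta^{\pm1}]$ is. If you start from $H\widetilde{\eZ}$ rather than $H\underline{K}^{MW}_\ast$, first compose with the truncation $H\widetilde{\eZ}\to\tau_{\le 0}H\widetilde{\eZ}\simeq H\underline{K}^{MW}_\ast$.
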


\begin{example}
Let $\cE$ be a locally free sheaf of rank $r = \dim X$ over a smooth and s-scheme $X$ over $\ek$.
\begin{itemize}
    \item In the Chow groups (i.e. cohomology with coefficients in the Milnor spectrum), the Euler class $e(\cE)$ becomes $c_r(\cE)$ in $\Chowct[\dim X]{X} \simeq \Chowcov[0]{X}.$
    \item In the Chow-Witt groups (i.e. cohomology with coefficients in the Milnor-Witt spectrum), the Euler class $e(\cE)$ lands in $\Chowtilde[r]{X,\det \cE}$ which is isomorphic to $\Chowtildecov[0]{X,\det \cE \otimes \omega_X^\vee},$ with $\omega_X := \det \Omega_X$ the determinant of the sheaf of Kähler differentials on $X.$
\end{itemize}
\end{example}
The last twist in the Chow-Witt group gives an obstruction to use the proper push-forward along the structural map $X \to \Spec{\ek}.$

\begin{deff}[Relative orientability]\label{DEF - orientabilite relative}
Let $X$ be a smooth $\ek-$scheme and $\cE$ a locally free sheaf of finite rank $r = \dim X$ on it. We say that $\cE$ is relatively orientable if there exists $\cL$ a locally free sheaf of rank 1 such that $\det \cE \simeq \omega_X \otimes \cL^{\otimes 2}.$ 

We also say that two invertible sheaves are in the same quadratic equivalence class or quadratically equivalent if they differ only by the product with a tensor square and we will note $\cE \simquad \cF$. In particular, we say that $\cE$ is quadratically trivial if quadratically equivalent to $\cO_X.$
\end{deff}

\begin{deff}[Quadratic degree]\label{DEF - degre quadratique}
Let $\pi : X\to \Spec{\ek}$ be a smooth and proper $\ek-$scheme. The quadratic degree is the push forward (c.f. Proposition \ref{PROP - covariance propre})
\[\pi_\ast : \Chowtildecov[0]{X} \to \Chowtildecov[0]{\ek}\simeq \GW(\ek).\]
\end{deff}

\begin{rmq}\label{RMQ - degre quad classe Euler}
The Chow-Witt Euler class $e(\cE)$ of a locally free sheaf $\cE$ of rank $n = \dim X$ over a smooth scheme $X$ lives in $\Chowtilde[n]{X,\det \cE}.$  To talk about the quadratic degree of an Euler class, we need to see this class in $\Chowtildecov[0]{X}.$ If $\cE$ is relatively orientable, a choice of orientation is an isomorphism $\rho : \det \cE \isomr \omega_X\otimes \cL^{\otimes 2}$ inducing an isomorphism 
\[\Chowtilde[n]{X,\det \cE} \simeq_\rho \Chowtildecov[0]{X}.\]
Only then can we take the quadratic degree and interpret the push forward of the Euler class as an element in the Grothendieck-Witt group.

We will also talk about relative orientation of $\Chowtilde[\dim X]{X,\cL}$ if $\cL \simquad \omega_X$ as we sometimes focus on the cohomology group directly. 
\end{rmq}

\begin{prop}[Orientability over the Grassmannian]\label{PROP - orientabilite grassmann}
Let $\Grass(n,k)$ be the Grassmannian of the rank $k$ quotients of $V$ a $n-$dimensional vector space over $\ek$. Then, $\omega_G \simeq (\det \cQ^\vee)^{\otimes n} := \cO(-n)$ in $\Picard{G}$. 
\end{prop}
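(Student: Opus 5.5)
The plan is the standard tangent bundle computation on the Grassmannian, done at the level of determinants. I would work with the convention fixed above: $G = \Grass(n,k)$ parametrises rank $k$ quotients $V\otimes\cO_G \twoheadrightarrow \cQ$. Let $\cS$ be the kernel of rank $n-k$, which gives the tautological exact sequence
\[0 \to \cS \to V\otimes \cO_G \to \cQ \to 0.\]

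The first step is to identify the tangent sheaf as $\cT_G \simeq \mathcal{H}om(\cS,\cQ) \simeq \cS^\vee \otimes \cQ$. Over $\ek$ this is the usual description of the tangent space at a point $[V \twoheadrightarrow Q]$ as $\operatorname{Hom}(\ker, Q)$. I would justify it globally in one of two ways:
\begin{itemize}
\item via the functor of points, since deformations of a quotient over the dual numbers are $\operatorname{Hom}(\cS,\cQ)$;
\item via the standard affine charts $\eA^{k(n-k)}$, on which $\cS$ and $\cQ$ are trivialised, checking that the transition maps are compatible.
\end{itemize}
Dually, this gives $\Omega_G \simeq \cS \otimes \cQ^\vee$.

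The second step is the determinant computation. For locally free sheaves $A$ of rank $a$ and $B$ of rank $b$, there is a canonical isomorphism $\det(A\otimes B) \simeq (\det A)^{\otimes b}\otimes(\det B)^{\otimes a}$. Applying this with $A=\cS$ (rank $n-k$) and $B=\cQ^\vee$ (rank $k$) gives
\[\omega_G = \det \Omega_G \simeq (\det \cS)^{\otimes k}\otimes (\det \cQ^\vee)^{\otimes (n-k)}.\]

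The third step uses multiplicativity of the determinant in the tautological sequence. This gives $\det \cS \otimes \det \cQ \simeq \det(V\otimes\cO_G) \simeq \cO_G$, so $\det \cS \simeq \det \cQ^\vee$. Substituting, I obtain $\omega_G \simeq (\det\cQ^\vee)^{\otimes k + n - k} = (\det \cQ^\vee)^{\otimes n}$.

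The only point requiring real care is the first step: getting the identification of $\cT_G$ with the correct variance for the contravariant quotient convention, since it is easy to swap $\cS$ and $\cQ$. After that, the remaining steps are formal manipulations of determinants. As a sanity check, for $k=1$ we have $G=\eP(V)=\eP^{n-1}$ and $\cQ=\cO(1)$, and the formula recovers $\omega_{\eP^{n-1}} = \cO(-n)$. For the paper's use, I would then note the consequence that $\omega_G$ is a square when $n$ is even, and is quadratically equivalent to $\det\cQ^\vee$ when $n$ is odd.
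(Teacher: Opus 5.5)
Your proof is correct: with the quotient convention, $\cT_G \simeq \mathcal{H}om(\mathcal{S},\cQ)$ for the tautological kernel $\mathcal{S}$, and the determinant formula for a tensor product together with $\det\mathcal{S}\simeq\det\cQ^\vee$ gives $\omega_G\simeq(\det\cQ^\vee)^{\otimes n}$, exactly as you write. This is the standard computation; the paper does not reprove the statement and only refers to \cite[Proposition 2.4.7]{Cha_dP2-4}, so there is no different route here to compare against.
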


\begin{proof}See \cite[Proposition 2.4.7]{Cha_dP2-4}.

\end{proof}

\begin{rmq}
From the last proposition, we emphasise on the fact that up to quadratic equivalence, there are only two possible orientations on the Grassmannian, the trivial one $\cO$ and the non-trivial one $\det \cQ.$
\end{rmq}

\subsection{Six functors with $G-$action}
We will now adapt all of this setup into the $G-$equivariant stable homotopy, using \cite{Hoyois_G-equiv} results.

Let $B$ be a base scheme and $G$ a flat group scheme of finite presentation over $B$.

\begin{deff}[$G-$solvable scheme]\label{DEF - schema G resoluble}
Let $X$ be a $G-$scheme. we say that $X$ has the property of $G-$resolution if for all $G-$module quasi-coherent of finite type $\cM$ over $X$, there exists a locally free $G-$module $\cE$ of finite rank and an epimorphism $\cE \twoheadrightarrow\cM$.
\end{deff}

\begin{deff}[Linearly reductive scheme]\label{DEF - schema lin reductif}
A group scheme $G$ over $B$ is called linearly reductive if the functor of $G-$fixed points $(-)^G : \QCoh[G]{B} \to \QCoh{B}$ is exact.
\end{deff}

\begin{deff}[Tamed group schemes]\label{DEF - schema en grp modere}
A flat group scheme $G$ over $B$ is tamed if
\begin{itemize}
\item $B$ admits a Nisnevich cover by $G-$solvable schemes (Definition \ref{DEF - schema G resoluble}),
\item $G$ est linearly reductive (Definition \ref{DEF - schema lin reductif}).
\end{itemize}
\end{deff}

With this definition, we can state the six functors formalism for the $G-$equivariant stable homotopy:

\begin{tho}[Theorem 6.18 in \cite{Hoyois_G-equiv}]\label{THO - six foncteurs equivariants}
Let $B$ be a qcqs scheme and $G$ a tamed group scheme over $B$ (if $G$ is not finite, $B$ has to be $G-$solvable). For $f : T \to S $ a morphism in $\Schcat_B^G$, we have the adjoint pair
\[\xymatrix{f^\ast : \StableHo[G]{S} \ar@<+2pt>[r] & \StableHo[G]{T} : f_\ast. \ar@<+2pt>[l]}\]
Furthermore, if $f$ is separated of finite type (s-morphism), we have :
\[\xymatrix{f_! : \StableHo[G]{T} \ar@<+2pt>[r] & \StableHo[G]{S} : f^!. \ar@<+2pt>[l]}\]
Then, the operators $(\otimes, \underline{\operatorname{Hom}}, f^\ast,f_\ast,f_!,f^!)$ satisfy to the six functors formalism and verify the same property as in \ref{SSEC - homotopie stable et six foncteurs}.
\end{tho}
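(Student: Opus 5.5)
The statement is \cite[Theorem 6.18]{Hoyois_G-equiv}, so the plan is to check that our setting meets Hoyois's hypotheses and then recall how the argument is organised, rather than to reprove it. The first step is to construct $\StableHo[G]{S}$ for $S$ in $\Schcat_B^G$. We take $G$-equivariant presheaves on smooth $G$-schemes over $S$ and impose equivariant Nisnevich descent and $\Au$-invariance. We then $\otimes$-invert the Thom spaces $\Thom[S]{\cE}$ of all $G$-equivariant locally free sheaves $\cE$, not only of $\eP^1$. The pair $(f^\ast,f_\ast)$ and the internal $\underline{\operatorname{Hom}}$ come formally from the functoriality of this localisation. Tameness of $G$ is used here in two ways. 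The $G$-resolution property (Definition \ref{DEF - schema G resoluble}) ensures there are enough equivariant vector bundles for the Thom spaces to generate. Linear reductivity (Definition \ref{DEF - schema lin reductif}) ensures that taking $G$-fixed points on quasi-coherent sheaves is exact, which is needed for the equivariant Nisnevich topology to behave well.

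Next we prove the three basic properties on which the rest of the formalism rests. The first is equivariant localisation: for a closed $G$-immersion $i$ with open complement $j$ there is a cofibre sequence $j_!j^\ast\to \eUn\to i_\ast i^\ast$. The second is smooth base change. The third is the equivariant homotopy purity $\fp_f:\Sigma^{\cT_f}f^\ast\isomr f^!$ of Theorem \ref{THO - purete version six foncteurs}, obtained from deformation to the normal bundle carried out $G$-equivariantly. Linear reductivity is again what makes these arguments work: it gives $G$-equivariant Nisnevich neighbourhoods, and it lets us lift étale and smooth charts equivariantly, which underlies both localisation and purity.

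With these in hand, we define $f_!$ by factoring a $G$-equivariant s-morphism as an open $G$-immersion followed by a proper $G$-morphism, and setting $f_! = p_\ast j_\sharp$. For well-definedness we check the usual compatibilities: independence of the factorisation, and the natural map $f_!\to f_\ast$ being invertible for proper $f$ (ambidexterity for smooth proper maps, deduced from purity). The remaining axioms then follow from the non-equivariant pattern of Theorem \ref{THO - formalisme six foncteurs SH}. Proper base change and the projection formula reduce, via localisation, to the cases of open immersions and of projective bundles.

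The main obstacle is the factorisation step. In general an equivariant Nagata compactification need not exist. This is why the statement assumes, when $G$ is not finite, that $B$ is $G$-solvable. Under that hypothesis we would first treat $G$-quasi-projective morphisms, which embed equivariantly into projective bundles of equivariant locally free sheaves. We would then extend to general s-morphisms by equivariant Nisnevich descent, following \cite[Section 6]{Hoyois_G-equiv}. For the use we make of the theorem, namely $G=N$ acting on $\eP^2$ and on Grassmannians, everything is $G$-quasi-projective, so only this first case is actually needed.
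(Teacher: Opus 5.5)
Your overall route is the paper's. The paper gives no argument beyond importing \cite[Theorem 6.18]{Hoyois_G-equiv}, and your first three paragraphs are a fair outline of how that theorem is built. The problem is your last paragraph, which misdescribes the scope of the cited result.

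Hoyois's formalism lives on $G$-quasi-projective $B$-schemes: the exceptional functors are constructed only for $G$-quasi-projective morphisms, i.e.\ morphisms of his category $\Schcat_B^G$. For $f : X \to Y$ in that category, an equivariant immersion $X \hookrightarrow \eP(\cV)$ over $B$, with $\cV$ a locally free $G$-module on $B$, already yields $X \hookrightarrow \eP(\cV)\times_B Y$. Taking the closure then gives $f = p \circ j$ with $j$ an open $G$-immersion and $p$ projective, so no equivariant Nagata theorem is ever needed. In particular, the $G$-resolution hypothesis on $B$ for non-finite $G$ is not a substitute for one. It is a hypothesis about having enough equivariant locally free sheaves, which is the role you already gave it in your first paragraph.

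Nor does \cite[Section 6]{Hoyois_G-equiv} extend $f_!$ and $f^!$ to arbitrary separated finite type $G$-morphisms by equivariant Nisnevich descent. Such an extension would need two things. First, every such $G$-scheme would have to be equivariantly Nisnevich-locally $G$-quasi-projective. Second, you would need a descent construction of the exceptional functors with all their compatibilities. The reference supplies neither, so as written this step, the only one doing work beyond the quasi-projective case, is unsupported.

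The statement has to be read with ``s-morphism'' meaning a morphism of $\Schcat_B^G$ in Hoyois's sense. With that reading your proof is simply the citation, as in the paper. As you note yourself, this costs nothing for the application. $N$ acts linearly on $\eP^2$ and on $\Grass(7,2)$, so every scheme used later is $N$-quasi-projective over $\ek$.
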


We can also mimic the construction of the universal Euler class from Section \ref{SSEC - classe Euler universelle} and we get:

\begin{deff}[$G-$equivariant Euler class]\label{DEF - classe Euler equivariante}
With the same notations as in \ref{SSEC - classe Euler universelle} but with all morphisms being $G-$morphisms, we get the $G-$equivariant universal Euler class as the composition
\[\xymatrix{e_G(\cE) : \eS_X \ar[r]^\eta & s^! s_{!}\eS_X \ar[r]^{\fp_{s}\qquad \quad } & \Thom[X]{\cE}\otimes_X s^\ast s_{\ast}\eS_X \ar[r]^{\qquad\epsilon}&\Thom[X]{\cE} .}\]
\end{deff}

\begin{rmq}
With the same definitions of homologies and cohomologies as in \ref{DEF - homologies et cohomologies} but for $G-$equivariant spectra, we get $e_G(\cE)$ as an element of $\eE(X,\cE)$.

Moreover, the orientation considerations as discussed in Section \ref{SSEC - orientations} still hold in the equivariant setup.
\end{rmq}

\section{Quadratic degree}\label{SEC - degre quadratique}
The definitions of Milnor-Witt, Milnor and Witt $K-$theory rings, their relations and cohomologies can be found in \cite[Section 3]{Cha_dP2-4}. They lead to some properties of Chow-Witt groups in maximal codimension and simplifications for the computation of Euler classes. We will just recall some results.

\subsection{About the $K-$theories}

Under the isomorphism (c.f. \cite[Lemma 3.10]{Morel_A1-alg-top}) between $K^{MW}_0(\ek)$ and $\GW(\ek),$ we have an \emph{hyperbolic element}\ in $\GW(\ek)$ defined as the quadratic form class
\[h := 1 + \qscal{-1},\]
with $\qscal{-1}$ the quadratic form class associated to $X \mapsto -X^2.$ In the Milnor-Witt $K-$theory ring notations, $h$ would be written as $2 + \eta [-1],$ with $\eta$ the Hopf element and $[-1]$ the generator of degree 1 in the Milnor-Witt $K-$theory ring, but we will keep the quadratic form version.

\begin{prop}\label{PROP - foncteur H bien def indep torsion}
For $\cL$ a 1-dimensional $\ek-$vector space, the twisted hyperbolic application
\[H:\left| \begin{array}{ccc}
     K^M_\ast(\ek)& \to& \KMW{\ek,\cL} \\
    \sigma & \mapsto & (h\sigma)\otimes l 
\end{array}\right. \]
is well defined and does not depend on the choice of $l$ in $\cL^\times$.
\end{prop}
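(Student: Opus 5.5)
We need to show two things: that $H$ lands in $\KMW{\ek,\cL}$ in a way compatible with the twisted structure, and that the result does not depend on the choice of the generator $l$. Recall the twisted Milnor-Witt $K$-theory:
\[\KMW{\ek,\cL} = \KMW{\ek}\otimes_{\eZ[\ek^\times]} \eZ[\cL^\times].\]
Here $\ek^\times$ acts on $\KMW{\ek}$ through $a\mapsto \qscal{a}$ and on $\eZ[\cL^\times]$ by scaling. Concretely, this means that for $a\in\ek^\times$ we have the identification
\[x\otimes (al) = (\qscal{a}x)\otimes l.\]

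The plan has three steps.

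First, make sense of $h\sigma$ for $\sigma \in K^M_\ast(\ek)$. The map $\KMW{\ek}\to K^M_\ast(\ek)$ is surjective with kernel $\eta\KMW{\ek}$. Multiplication by $h$ kills $\eta$, because $\eta h = 0$ in $\KMW{\ek}$. So, choosing any lift $\tilde\sigma$ of $\sigma$, the element $h\tilde\sigma$ is independent of the lift. This gives a well defined homomorphism
\[K^M_\ast(\ek)\to \KMW{\ek}, \qquad \sigma \mapsto h\sigma.\]

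Second, compose with $x\mapsto x\otimes l$ for a fixed $l\in\cL^\times$. This is clearly additive, so for fixed $l$ the map $H$ is a well defined group homomorphism.

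Third, compare two generators. Any other generator is $l' = al$ with $a\in\ek^\times$, since $\cL$ is $1$-dimensional. Then
\[(h\sigma)\otimes al = (\qscal{a}h\sigma)\otimes l.\]
So it suffices to show $\qscal{a}h = h$ in $\GW(\ek)=K^{MW}_0(\ek)$. This is the classical identity
\[\qscal{a}(1+\qscal{-1}) = \qscal{a}+\qscal{-a} = 1+\qscal{-1}.\]
It holds because the form $aX^2-aY^2$ is isotropic, hence hyperbolic. In Milnor-Witt notation, it is the relation $\qscal{a}h = h + \eta[a]h = h$, using again $\eta h = 0$. Since $\qscal{a}$ acts through $K^{MW}_0$ by left multiplication, the identity passes to any degree.

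The only substantive point is the relation $\eta h = 0$, which is a standard relation in $\KMW{\ek}$ (Morel). Everything else is bookkeeping with the definition of the twist.
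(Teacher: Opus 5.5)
Your proof is correct and complete. You settle well-definedness by descending multiplication by $h$ along the surjection $\KMW{\ek}\to K^M_\ast(\ek)$, whose kernel is killed because $\eta h=0$, and you settle independence of $l$ by combining $(h\sigma)\otimes al=(\qscal{a}h\sigma)\otimes l$ with $\qscal{a}h=h$. The paper gives no argument of its own here and only refers to \cite[Proposition 3.2.4]{Cha_dP2-4}; your argument is the standard proof of this fact.
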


\begin{proof}
See \cite[Proposition 3.2.4]{Cha_dP2-4}.
\end{proof}

\begin{deff}[Forgetful and hyperbolic applications]\label{DEF - applications oubli et hyperbolique}
The two $\eZ-$graded algebras homomorphisms $F: \KMW{\ek,\cL} \to K^M_\ast(\ek)$ and $H: K^M_\ast(\ek) \to \KMW{\ek,\cL}$ 
are called the forgetful and hyperbolic homomorphisms. They are functorial with respect to the field.
\end{deff}

\begin{rmq}
By definition, $F$ is characterised by $F(\eta) = 0$ and $F([a]\otimes l) = \accol{a}$. $H$ is given by the multiplication by $h$ thus $H(\accol{a}) = h[a]$. Moreover, the compositions give $F\circ H = 2 \Id$ and $H \circ F = \gamma_h$ (the multiplication by $h$). 
\end{rmq}

\begin{prop}\label{PROP - fonction hyperbolique bien def torsion}
For $n$ a non-negative integer, $X$ a smooth $\ek-$scheme and $\cL$ a locally free sheaf of rank 1 over $X,$ the hyperbolic application $H: \Chowct[n]{X} \to \Chowtilde[n]{X,\cL}$ of multiplication by $h$ is well defined and makes the following diagram commute:
\[\xymatrix{\Chowct[n]{X}\ar[d]_{H} \ar@{=}[r] & \Chowct[n]{X} \ar[d]^{\times 2}\\
\Chowtilde[n]{X,\cL} \ar[r]^F & \Chowct[n]{X}. }\]
\end{prop}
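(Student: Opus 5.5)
The plan is to define $H$ at the level of the Gersten/Rost–Schmid complexes and then pass to cohomology. Recall that $\Chowtilde[n]{X,\cL}$ is the $n$-th cohomology of the complex
\[C^\bullet(X,\sKW^{MW}_n,\cL): \quad \bigoplus_{x\in X^{(p)}} \KMW[n-p]{\ek(x),\omega_x\otimes\cL_x}.\]
Here $\omega_x$ is the determinant of the normal space at $x$. Likewise $\Chowct[n]{X}$ is the $n$-th cohomology of the Gersten complex with terms $\bigoplus_{x\in X^{(p)}} K^M_{n-p}(\ek(x))$, the Rost complex for Milnor $K$-theory.

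The first step is to define $H$ termwise. On each summand we use the twisted hyperbolic map of Proposition \ref{PROP - foncteur H bien def indep torsion}, with target $\KMW{\ek(x),\omega_x\otimes\cL_x}$. By that proposition it is independent of the choice of a nonzero vector in the one-dimensional $\ek(x)$-vector space $\omega_x\otimes\cL_x$. This independence is exactly what makes the termwise map canonical: it glues without choices and needs no orientation of $\cL$. Similarly, the forgetful map $F$ of Definition \ref{DEF - applications oubli et hyperbolique} is defined termwise, simply by dropping the twist.

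The second step, which I expect to be the main obstacle, is to check that these termwise maps commute with the differentials, so that they are morphisms of complexes. The Rost–Schmid differential is built from the residue maps $\partial_v^\pi$ attached to discrete valuations, composed with norm/transfer maps for finite extensions $\ek(y)\subset \ek(z)$. So one must show that $h$ is compatible with both. For the residue maps this follows from $h$ lying in $K^{MW}_0$ and $\partial(h\sigma)=h\,\partial(\sigma)$, which holds because $\langle u\rangle h=h$ for units $u$. This relation also makes the twist by the uniformiser choice irrelevant. For transfers one uses the projection formula $\mathrm{tr}(h\cdot \sigma)=h\cdot \mathrm{tr}(\sigma)$, since $h$ comes from the base field. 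On the Milnor side, the corresponding Milnor residues and norms are intertwined with these by $F$, which is a standard compatibility of Morel's construction. Hence $H$ and $F$ are chain maps and induce maps on $\Chowct[n]{X}$ and $\Chowtilde[n]{X,\cL}$.

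Finally, commutativity of the square is checked at the chain level. On each summand $F\circ H(\sigma)=F(h)\,\sigma=2\sigma$, since $F(h)=F(1)+F(\qscal{-1})=1+1$; this is the relation $F\circ H=2\Id$ recorded in the remark after Definition \ref{DEF - applications oubli et hyperbolique}. Because the identity holds termwise on the complexes, it descends to cohomology, giving $F\circ H=\times 2$ on $\Chowct[n]{X}$.
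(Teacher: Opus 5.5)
Your proposal is correct and is essentially the argument the paper has in mind. The paper gives no separate proof here: it treats the statement as the sheafified version of the field-level Proposition~\ref{PROP - foncteur H bien def indep torsion} together with the relation $F\circ H=2\Id$. Your termwise definition on the Rost--Schmid and Rost complexes, the compatibility with residues and transfers, and the chain-level check $F(h)=2$ are exactly what fill this in.

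One small precision: the identity $\partial_v(h\alpha)=h\,\partial_v(\alpha)$ comes from $\partial_v(\langle u\rangle\alpha)=\langle \bar u\rangle\,\partial_v(\alpha)$ for units $u$ (here $u=-1$). By contrast, $\langle u\rangle h=h$ is what makes $H$ independent of the choice of uniformiser and of the twist.
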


\subsection{Chow-Witt Euler classes computation}\label{SSEC - calcul classes Euler}

\begin{prop}\label{PROP - CW comme image H et Witt}
Let $X$ be isomorphic to a Grassmannian or a projective bundle over it. When $n = \dim X,$ a cocycle in $\Chowtilde[n]{X,\cL}$ is the data of its image in $H^n(X,\underline{K}^W_n(\cL))$ and of an element  entirely determined by the value of the induced cocycle in $\Chowct[n]{X}.$ More precisely, we have
\[\Chowtilde[n]{X,\cL} \simeq H^n(X,\sKW_n(\cL))\times_{\Chd[n]{X}}\Chowct[n]{X}.\]
\end{prop}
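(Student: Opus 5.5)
We plan to derive the statement from the fundamental fiber square of sheaves relating Milnor–Witt, Witt and Milnor $K$-theory, together with the vanishing of $2$-torsion phenomena on cellular varieties. Recall (Morel) that for every $n$ there is a cartesian square of Nisnevich sheaves
\[\xymatrix{\underline{K}^{MW}_n(\cL) \ar[r]\ar[d] & \underline{K}^M_n \ar[d]\\ \sKW_n(\cL)\ar[r] & \underline{K}^M_n/2,}\]
where $\sKW_n(\cL) = \fId^n(\cL)$ and $\fId^n/\fId^{n+1}\simeq K^M_n/2$ by Milnor's conjecture. Equivalently, there is a short exact sequence $0\to \underline{K}^{MW}_n(\cL)\to \underline{K}^M_n\oplus \sKW_n(\cL)\to \underline{K}^M_n/2\to 0$ of sheaves on $X$.

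We first take the associated long exact sequence in Nisnevich (or Zariski, via Rost–Schmid complexes) cohomology in degree $n=\dim X$:
\[H^{n-1}(X,\underline{K}^M_n)\oplus H^{n-1}(X,\sKW_n(\cL))\to H^{n-1}(X,\underline{K}^M_n/2)\to \Chowtilde[n]{X,\cL}\to \Chowct[n]{X}\oplus H^n(X,\sKW_n(\cL))\to \Chd[n]{X}\to 0.\]
The right-hand part already shows $\Chowtilde[n]{X,\cL}$ surjects onto the fiber product $H^n(X,\sKW_n(\cL))\times_{\Chd[n]{X}}\Chowct[n]{X}$; the statement reduces to injectivity, i.e. to surjectivity of $H^{n-1}(X,\underline{K}^M_n)\to H^{n-1}(X,\underline{K}^M_n/2)$ (it suffices to use the Milnor summand).

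The key step is this surjectivity for $X$ a Grassmannian or a projective bundle over one. We will use that such $X$ has a cellular decomposition (Schubert cells, resp. relative cells over those of the base). For cellular varieties, Milnor $K$-cohomology decomposes: $H^{p}(X,\underline{K}^M_q)\simeq \bigoplus_{\text{cells of codim } p'} K^M_{q-p'}(\ek)$ with $p'=p$ contributions only, more precisely $H^{n-1}(X,\underline{K}^M_n)\simeq \bigoplus_{\text{cells of codim } n-1} K^M_1(\ek)$, and likewise with mod $2$ coefficients $H^{n-1}(X,\underline{K}^M_n/2)\simeq\bigoplus K^M_1(\ek)/2$, compatibly with reduction mod $2$. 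Since $K^M_1(\ek)\to K^M_1(\ek)/2$ is surjective, the required surjectivity follows. To justify the cell decomposition, we would induct on the stratification by localization sequences, using homotopy invariance and that $H^\ast(\eA^m,\underline{K}^M_\ast)$ is concentrated in degree $0$; the exactness splitting relies on the fact that the boundary maps vanish because all cells are defined over $\ek$ and the Chow groups are free (the localization long exact sequences split into short exact ones).

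We expect the main obstacle to be this last point: ensuring the localization sequences degenerate uniformly for both $\underline{K}^M_n$ and $\underline{K}^M_n/2$ so that the comparison map is identified cell-by-cell with reduction mod $2$. Alternatively, one may invoke the projective bundle formula for Milnor $K$-cohomology (and the known computation for Grassmannians), which gives the same decomposition functorially in the coefficients, avoiding explicit boundary computations.
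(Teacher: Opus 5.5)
Your argument is correct, but it goes by a different route than the one the statement points to. The paper gives no proof of its own; it only cites \cite[Proposition 3.4.1]{Cha_dP2-4}. The wording (``an element entirely determined by the value of the induced cocycle in $\Chowct[n]{X}$'') suggests the Hornbostel--Wendt argument through the hyperbolic and forgetful maps.

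That argument runs as follows. From $\underline{K}^M_n\xrightarrow{h}\underline{K}^{MW}_n(\cL)\to\sKW_n(\cL)\to 0$ and right exactness of $H^n$ in top degree, the kernel of $\Chowtilde[n]{X,\cL}\to H^n(X,\sKW_n(\cL))$ is $H(\Chowct[n]{X})$. Since $F\circ H=2$, an element $H(c)$ is detected by $2c$ once $\Chowct[n]{X}$ has no $2$-torsion, and this holds because Grassmannians and projective bundles over them have free Chow groups. Surjectivity onto the fibre product is a similar short chase.

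You instead take the Mayer--Vietoris sequence of Morel's cartesian square. This gives surjectivity at once and reduces injectivity to surjectivity of $H^{n-1}(X,\underline{K}^M_n)\to H^{n-1}(X,\underline{K}^M_n/2)$, which you verify cell by cell.

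The two hypotheses are closely related. Let $2\underline{K}^M_n$ denote the image subsheaf of multiplication by $2$. The sequence $0\to 2\underline{K}^M_n\to\underline{K}^M_n\to\underline{K}^M_n/2\to 0$ shows that your surjectivity is equivalent to injectivity of $H^n(X,2\underline{K}^M_n)\to\Chowct[n]{X}$. The map $\Chowct[n]{X}\to H^n(X,2\underline{K}^M_n)$ is onto, since $H^{n+1}$ vanishes, and the composite is multiplication by $2$. So this injectivity already follows from $\Chowct[n]{X}$ being $2$-torsion free, and you could drop the degree $n-1$ cellular computation entirely.

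Your route buys a clean single long exact sequence. The hyperbolic route needs only a condition on the top Chow group and explains the ``determined by the Chow class'' phrasing.

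One precision on your cellular step: the localisation boundaries do not vanish ``because the Chow groups are free''. They vanish because the only nonzero group on an open cell $\eA^m$ consists of constant classes pulled back from $\ek$. These are unramified, so their residues along the closed complement are zero.
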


\begin{proof}
See \cite[Proposition 3.4.1]{Cha_dP2-4}.
\end{proof}


\begin{cor}\label{COR - groupe Chow-Witt corps}
If $X = \Spec{\ek},$ we have obvious isomorphisms (using the previous notations and definitions introduced in this section \ref{SEC - degre quadratique}):
\begin{itemize}
\item $H^0(X,\sKW_0) \simeq \bfW^0(X) \simeq \Witt(\ek),$
\item $\Chowct[0]{X} \simeq \eZ,$
\item $\Chowtilde[0]{X} \simeq \GW(\ek)$.
\end{itemize}
In particular, the Grothendieck-Witt degree can be determined through the decomposition into the Witt degree and Chow degree. The difference is then a multiple of $h$.
\end{cor}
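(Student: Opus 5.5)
Corollary \ref{COR - groupe Chow-Witt corps} is Proposition \ref{PROP - CW comme image H et Witt} specialised to $X=\Spec{\ek}$, $n=0$ and trivial twist $\cL=\cO$, where every sheaf cohomology group collapses to global sections on a point. I will identify the three groups first and then deduce the statement about the Grothendieck-Witt degree.

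\textbf{The three identifications.} On $\Spec{\ek}$ the only point is the generic point, so the Rost--Schmid (Gersten) complexes have a single term. Hence $H^0(X,\sKW_0)=\sKW_0(\ek)=W(\ek)$. This coincides with $\bfW^0(X)$, because the Witt sheaf and the zeroth Witt $K$-theory sheaf agree in degree $0$. Similarly, $\Chowct[0]{X}=K^M_0(\ek)=\eZ$ and $\Chowtilde[0]{X}=K^{MW}_0(\ek)\simeq\GW(\ek)$, the last isomorphism being the one of \cite[Lemma 3.10]{Morel_A1-alg-top} recalled above.

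\textbf{The fibre product.} The group $\Chd[0]{X}$ is $\eZ/2\eZ$. The map $W(\ek)\to\eZ/2\eZ$ is the rank modulo $2$, and $\eZ\to\eZ/2\eZ$ is reduction modulo $2$. Proposition \ref{PROP - CW comme image H et Witt} then gives
\[\GW(\ek)\simeq W(\ek)\times_{\eZ/2\eZ}\eZ .\]
This is the classical cartesian square, with $q\mapsto(\bar q,\operatorname{rk} q)$. It can also be checked directly. For injectivity, $\ker(\GW\to W)=\eZ h$, and $\operatorname{rk}(mh)=2m$ vanishes only when $m=0$. For surjectivity, given $(w,r)$ with matching parities, lift $w$ to a form $q$; then $r-\operatorname{rk}q$ is even, so adding a suitable multiple of $h$ fixes the rank.

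\textbf{Conclusion.} An element of $\GW(\ek)$ is therefore determined by its Witt class and its rank, that is, by its Witt degree and its Chow degree. If two elements have the same Witt degree, their difference lies in $\ker(\GW(\ek)\to W(\ek))=\eZ h$, so it is a multiple of $h$. When the Chow degrees also agree, that multiple is $0$.

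The only step that needs care is checking that the maps in Proposition \ref{PROP - CW comme image H et Witt} really are the rank maps modulo $2$ on a point. This is just unwinding the definitions, so I expect no real obstacle.
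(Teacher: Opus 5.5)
Your proposal is correct and follows the paper's route: the corollary is Proposition \ref{PROP - CW comme image H et Witt} specialised to $X=\Spec{\ek}$, where the Gersten complexes reduce to $K^W_0(\ek)=\Witt(\ek)$, $K^M_0(\ek)=\eZ$ and $K^{MW}_0(\ek)\simeq\GW(\ek)$, and the paper states this without proof as ``obvious''. Your direct check of the cartesian square $\GW(\ek)\simeq\Witt(\ek)\times_{\eZ/2\eZ}\eZ$, using $\ker(\GW(\ek)\to\Witt(\ek))=\eZ h$, supplies the detail the paper omits and justifies the ``multiple of $h$'' statement later used in Proposition \ref{PROP - classe Euler orientable}.
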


\begin{prop}[Orientable Euler degree]\label{PROP - classe Euler orientable}
Let $\cE$ be an orientable locally free sheaf of rank $r$ over a smooth $\ek-$scheme $X$ of dimension $r$. Then 
\[\deg^{CW} e^{CW}(\cE) = w + n h \in \GW(\ek)\]
where $w$ is the image of $\deg^W e^W(\cE)$ in the Witt group and $n$ is an integer such that $rk(w) + 2n = \deg c_r(\cE).$

\end{prop}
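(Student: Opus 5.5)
We prove the statement by comparing three degree maps: the Chow–Witt, Witt and Chow degrees. We use the fiber product decomposition of Proposition \ref{PROP - CW comme image H et Witt} at the level of a point, as recorded in Corollary \ref{COR - groupe Chow-Witt corps}. Fix an orientation $\rho : \det \cE \isomr \omega_X \otimes \cL^{\otimes 2}$. As in Remark \ref{RMQ - degre quad classe Euler}, this lets us view $e^{CW}(\cE)$ in $\Chowtildecov[0]{X}$, view $e^W(\cE)$ in the correspondingly twisted Witt cohomology $H^r(X,\sKW_r(\omega_X))$, and view $c_r(\cE)$ in $\Chowcov[0]{X}$. We may push each of these forward along $\pi : X \to \Spec{\ek}$ by Proposition \ref{PROP - covariance propre}.

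The first step is compatibility of the Euler classes with change of coefficients. There are natural morphisms of ring spectra from Milnor–Witt cohomology to Witt cohomology and to Milnor cohomology, namely the forgetful map $F$ and the reduction modulo $\eta$. The universal Euler class of Definition \ref{DEF - classe Euler univ stable} is defined once and for all in $\StableHo{X}$, and $e^\eE(\cE)$ is obtained from it by the unit of $\eE$. Hence the images of $e^{CW}(\cE)$ under these coefficient maps are $e^W(\cE)$ and $e^{M}(\cE) = c_r(\cE)$. These maps respect the orientation isomorphism $\rho$, since the maps of $\SL^c$-oriented theories are compatible with Thom isomorphisms. They also commute with proper push-forward, because push-forward is built from the six functors and the coefficient maps are natural transformations.

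Pushing forward along $\pi$ then gives that $\deg^{CW} e^{CW}(\cE) \in \GW(\ek)$ maps to $\deg^W e^W(\cE) = w$ in $\Witt(\ek)$ and to $\deg c_r(\cE)$ in $\eZ$. The key algebraic fact is the cartesian square $\GW(\ek) \simeq \Witt(\ek) \times_{\eZ/2} \eZ$. Here the map $\GW(\ek) \to \eZ$ is the rank, and the map to $\eZ/2$ is rank modulo $2$, which agrees with $\Chd[0]{\ek}$ in Proposition \ref{PROP - CW comme image H et Witt}. Concretely, the kernel of $\GW(\ek) \to \Witt(\ek)$ is $\eZ h$, and the rank is injective on $\eZ h$.

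Now choose any lift $\tilde w \in \GW(\ek)$ of $w$, for instance a diagonal form representing it, and write $\mathrm{rk}(w)$ for its rank. Then $\deg^{CW} e^{CW}(\cE) - \tilde w$ lies in $\eZ h$, so it equals $nh$ for some integer $n$. Taking ranks gives $\mathrm{rk}(w) + 2n = \deg c_r(\cE)$, which is the stated relation. The ambiguity in the choice of lift only shifts $n$ and is absorbed in this formula.

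The main obstacle is the first step: checking carefully that the Euler class and the twisted push-forward commute with the comparison maps $\mathrm{MW} \to \mathrm{W}$ and $\mathrm{MW} \to \mathrm{M}$ once the orientation is fixed. I would handle this by working at the level of the spectra, where each comparison map is a morphism of $\SL^c$-oriented ring spectra, and by invoking naturality of the six-functor constructions in the coefficient spectrum. The identification of the kernel of $\GW(\ek) \to \Witt(\ek)$ with $\eZ h$ is classical (Witt cancellation) and is already implicit in Corollary \ref{COR - groupe Chow-Witt corps}.
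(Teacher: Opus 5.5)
Your proposal is correct and follows the paper's proof: you push $e^{CW}(\cE)$ forward to the point and use the fibre-product description $\GW(\ek)\simeq \Witt(\ek)\times_{\eZ/2}\eZ$ of Corollary \ref{COR - groupe Chow-Witt corps}, whose Witt and Chow components are $\deg^W e^W(\cE)$ and $\deg c_r(\cE)$. You spell out what the paper leaves implicit: that Euler classes and push-forwards are compatible with the coefficient maps, and that a lift of $w$ must be chosen for $\mathrm{rk}(w)$ to make sense. One small labelling slip: the map to Witt cohomology is the quotient by $h$, whereas $F$ and reduction modulo $\eta$ both land in Milnor $K$-theory.
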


\begin{proof}
If we note $f : X \to \Spec{\ek}$ the structural morphism, then we look at the value of $f_\ast e^{CW}(\cE)$ in $\Chowtildecov[0]{\Spec\ek} = \Chowtilde[0]{\Spec\ek}.$ Then, the value is determined by the value in $H^0(\Spec{\ek},\sKW_0) \simeq \Witt(\Spec{\ek})$ and in $\Chowct[0]{\Spec\ek}$. By the fibre product property of Proposition \ref{PROP - CW comme image H et Witt}, we have the complete description. 
\end{proof}

\begin{rmq}\label{RMQ - donnee Witt est la seule importante}
In the end, when computing Euler classes (or degree), the "quadratic information" is solely contained in the cohomology with coefficients in $K^W$ (or the Witt group when orientable). The other part is then just a translation of the "classical" result over an algebraically closed field back in the Chow-Witt group (or Grothendieck-Witt group).
\end{rmq}

\section{$\bfW-$cohomology as $N-$equivariant cohomology}\label{SEC - cohom Witt equivariante}

\subsection{The choice of the group $N$}
We will specialise everything to the equivariant cohomology with respect to a group $N$ such that we get back the cohomology with coefficients in the Witt group. It will give some means to compute the Euler degree when the Euler class is orientable.

Let's take $N = \left\langle t = \left(\begin{array}{cc}
    t & 0 \\
    0 & t^{-1}
\end{array}\right) , \sigma = \left(\begin{array}{cc}
    0 & 1 \\
    -1 & 0
\end{array}\right) \right\rangle$ the normalisation of the torus in $\SL_2$.

\begin{prop}[Remark 2.4 in \cite{LevineAB24}]
The group $N$ gives a tamed group scheme on $\Spec{\ek}.$
\end{prop}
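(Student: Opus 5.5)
To prove that $N$ is tamed over $B = \Spec{\ek}$, I will check the two conditions of Definition \ref{DEF - schema en grp modere} directly. Since $N$ is not finite, Theorem \ref{THO - six foncteurs equivariants} also requires $B$ itself to be $N-$solvable, which is stronger than the Nisnevich-local condition. So I will show that $\Spec{\ek}$ has the $N-$resolution property (Definition \ref{DEF - schema G resoluble}) and that $N$ is linearly reductive (Definition \ref{DEF - schema lin reductif}).

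First I describe $N$ as an extension. It is the normaliser of the diagonal torus $T \simeq \mathbb{G}_m$ in $\SL_2$. There is an exact sequence
\[1 \to \mathbb{G}_m \to N \to \eZ/2\eZ \to 1,\]
where the quotient is generated by the image of $\sigma$. We have $\sigma^2 = -\Id \in T$, and conjugation by $\sigma$ acts on $T$ by inversion $t\mapsto t^{-1}$. In particular $N$ is a smooth affine group scheme, flat and of finite presentation over $\ek$.

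For the resolution property over $\Spec{\ek}$, a quasi-coherent $N-$module of finite type is a finite-dimensional representation $M$ of $N$. The identity map $M \to M$ is then already an epimorphism from a locally free $N-$module of finite rank, so the condition holds trivially. More generally, every affine scheme with an action of an affine flat group over a field has the resolution property, by the usual argument that comodules are unions of finite-dimensional subcomodules. Since the base is a point, though, the trivial argument suffices.

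For linear reductivity, I need exactness of $M \mapsto M^N$ on all $N-$comodules, which need not be finite-dimensional. I will compute invariants in two steps: $M^N = (M^{\mathbb{G}_m})^{\eZ/2\eZ}$, which is valid because $\mathbb{G}_m$ is normal in $N$. The functor $M\mapsto M^{\mathbb{G}_m}$ is exact, because a $\mathbb{G}_m-$comodule is exactly a $\eZ-$graded vector space and the invariants are its degree $0$ part. The subspace $M^{\mathbb{G}_m}$ inherits an action of $N/\mathbb{G}_m \simeq \eZ/2\eZ$. Taking invariants under the finite constant group $\eZ/2\eZ$ is exact as soon as $2$ is invertible in $\ek$, using the Reynolds projector $\frac{1}{2}(1+\sigma)$. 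This is precisely where the standing assumption $\mathrm{char}(\ek)\neq 2$ enters. Composing two exact functors gives an exact functor, so $N$ is linearly reductive.

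The main obstacle is the middle step. I must check carefully that $M^{\mathbb{G}_m}$ is an honest $\eZ/2\eZ-$module, which requires that the lift $\sigma$ acts on it well, and also that $\sigma^2 = -\Id$ acts trivially there. The latter holds because $-\Id$ lies in $\mathbb{G}_m$, so it fixes $\mathbb{G}_m-$invariants. The remaining argument is formal; alternatively, I would simply cite \cite[Remark 2.4]{LevineAB24} for this verification.
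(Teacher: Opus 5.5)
Your argument is correct, but it does not follow the paper's route, because the paper gives no argument. It simply defers to \cite[Remark 2.4]{LevineAB24}. You instead verify the two conditions of the definition of a tamed group scheme directly.

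\begin{itemize}
\item \textbf{Resolution property.} Over $\Spec{\ek}$ this condition is vacuous. A quasi-coherent $N$-module of finite type is a finite-dimensional representation, which is already free of finite rank. This also settles the stronger requirement in Theorem \ref{THO - six foncteurs equivariants}, where $B$ itself must be $N$-solvable since $N$ is not finite.
\item \textbf{Linear reductivity.} You factor the invariants as $(-)^N=\bigl((-)^{\mathbb{G}_m}\bigr)^{\eZ/2\eZ}$ and show each step is exact. The first step uses the weight decomposition of $\mathbb{G}_m$-comodules. The second uses the Reynolds projector $\frac12(1+\sigma)$.
\item \textbf{The check you flagged.} It is exactly the right one, and it goes through. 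Normality of the torus makes $M^{\mathbb{G}_m}$ an $N$-submodule. On it, $\sigma^2=-\Id\in\mathbb{G}_m$ acts trivially, so $\sigma$ induces an honest involution.
\end{itemize}

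What the citation hides is essentially this instance of Nagata's criterion: the identity component of $N$ is a torus, and its component group has order invertible in $\ek$.

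The two approaches trade off as follows. Citing Levine buys brevity. Your verification makes explicit where the standing hypothesis $\mathrm{char}(\ek)\neq 2$ enters, namely only in the Reynolds operator. It also covers arbitrary, not necessarily finite-dimensional, comodules, as the paper's definition of linear reductivity requires.
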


This way, the ($N-$equivariant) Euler class and the (co)homologies are properly defined the same way as before and every consideration of orientability and quadratic degree still hold. In particular, the degree map lands in the cohomology of the classifying space $BN,$ which we now (partially) describe.

\begin{deff}[$N-$representations]\label{DEF - N rpz}
\cite[Section 6]{Levine_witt-val}
\begin{enumerate}
    \item For $m \geq 1$ an integer,  we define the representation of $N$ on $L(m) = \eA^2$ by: 
    \begin{itemize}
        \item $\rho_m(t) = t^m$ and
        \item $\rho_m(\sigma) = \left(\begin{array}{cc}
    0 & 1 \\
    (-1)^m & 0
\end{array}\right).$
    \end{itemize}
    \item For $m \geq 1$ an integer,  the representation $\rho_m^- : N \to \SL_2$ is given by: 
    \begin{itemize}
        \item $\rho_m^-(t) = \rho_m(t)$ and
        \item $\rho_m^-(\sigma) = (-1)\rho_m(\sigma).$
    \end{itemize}
    \item For $m = 0,$ the representation $L(0)$ is the trivial $1-$dimensional representation of $N$.
\end{enumerate}
\end{deff}

\begin{deff}[Bundle associated to an $N-$representation]\label{DEF - fibre assoc N rpz}
We define $\Tilde{\cO}^\pm(m)$ the rank two vector bundle on $BN$ associated to $\rho_m^\pm.$
\end{deff}
\begin{rmq}\label{RMQ - ordre base rpz impair}
To determine if the representation is the positive one or the negative one when $m$ is odd, we need to choose an order of the basis. The order is always the decreasing one for the weights induced by the action by $t.$
\end{rmq}

\noindent\textbf{Notations:}\ Let $e$ be the Euler class of the tautological rank 2 bundle of $B\SL_2$ in $H^2(B\SL_2,\cW)$. If we note $p : N \hookrightarrow \SL_2$ the immersion of $N$ in $\SL_2$, we then get $p^\ast e$ its pull-back class in $H^2(BN,\cW).$ 

We will also note $\gamma$ a generator of $\Picard{BN}$ as defined in \cite[section 5]{Levine_witt-val}.

\begin{prop}[$BN$ cohomology]\label{PROP - cohomologie BN}
\cite[Proposition 5.5]{Levine_witt-val}

Let $\Witt(F)[x_0,x_2]$ be the polynomial algebra spanned by the $x_i$ in degree $i$. We have an isomorphism
\[H^\ast(BN,\cW) \simeq \Witt(F)[x_0,x_2]/(x_0^2-1, (1+x_0)x_2)\]
where $x_2$ is the image of $p^\ast e.$

\end{prop}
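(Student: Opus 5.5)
The statement is \cite[Proposition 5.5]{Levine_witt-val}. To prove it I would compute $H^\ast(BN,\cW)$ with the model $BN = B\SL_2$-bundle over $\SL_2/N$. The homotopy fibre sequence $\SL_2/N \to BN \to B\SL_2$ makes $BN$ an $\SL_2/N$-bundle over $B\SL_2$. Here $\SL_2/N$ is the open complement of the conic in $\eP^2$: it is the space of unordered pairs of distinct points of $\eP^1$, i.e. $\eP(\mathrm{Sym}^2)$ minus the discriminant conic.

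\textbf{Step 1: the fibre.} I would first compute $H^\ast(\SL_2/N,\cW)$ using the localisation sequence for the closed immersion $C \hookrightarrow \eP^2$, where $C \simeq \eP^1$ is the conic. The relevant inputs are:
\begin{itemize}
\item $H^\ast(\eP^2,\cW) = \Witt(\ek)$ in degree $0$, and $H^\ast(\eP^2,\cW(\cO(1))) = \Witt(\ek)$ in degree $2$;
\item $H^\ast(\eP^1,\cW(\cL))$ for the twist of the normal bundle $\cO_{\eP^1}(4)$, which is a square and hence orientable.
\end{itemize}
One should find that $H^\ast(\SL_2/N,\cW)$ is $\Witt(\ek)$ in degree $0$ plus a class in degree $0$ coming from the nontrivial line bundle $\gamma$: the Picard group $\eZ/2$ gives a twisted unit. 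Concretely, untwisted $\cW$-cohomology should be $\Witt(\ek)\oplus\Witt(\ek)$ concentrated in degree $0$, generated by $1$ and $x_0$. Here $x_0$ comes from the double cover $\SL_2/T \to \SL_2/N$, which is pushforward of $1$ minus $1$, or equivalently from the identification of $\gamma$-twisted and untwisted groups via $\gamma^{\otimes 2}\simeq\cO$.

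\textbf{Step 2: the fibration.} Next I would run the Leray (or Gysin-type) argument for $BN \to B\SL_2$, using $H^\ast(B\SL_2,\cW) = \Witt(\ek)[e]$. Since the fibre cohomology is concentrated in degree $0$ and free, the spectral sequence degenerates. This gives $H^\ast(BN,\cW)$ as a free $\Witt(\ek)[e]$-module on $1$ and $x_0$, up to relations. I would rather use the finite-dimensional approximation $BN \approx (\eA^{2n}\setminus 0)/N$ and compute directly.

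\textbf{Step 3: relations.} The relation $x_0^2 = 1$ comes from $x_0$ being a unit of order two, a square-class-type element; this is checked after restriction to $BT$. For the relation $(1+x_0)x_2 = 0$, observe that $p^\ast e$ restricted to $BT$ is the Euler class of $\cO(1)\oplus\cO(-1)$, which vanishes in Witt cohomology because it has an odd-rank trivial summand (hyperbolic). Combined with the transfer $BT\to BN$, which sends $1$ to $1+x_0$ under an appropriate normalisation, the projection formula gives $(1+x_0)x_2 = \mathrm{tr}(\mathrm{res}(x_2)) = 0$.

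The main obstacle is identifying the transfer and the class $x_0$ precisely, including the sign conventions tied to $\gamma$, and proving that no further relations occur. For the latter I would compare ranks degreewise with the explicit computation on $(\eA^{2n}\setminus 0)/N$.
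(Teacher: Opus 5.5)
\textbf{There is a genuine gap in Steps 1--2.} The paper gives no argument here beyond quoting \cite[Proposition 5.5]{Levine_witt-val}, so your sketch has to stand on its own.

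\emph{The fibre is not concentrated in degree $0$.} The localisation sequence you set up contradicts your Step 1 conclusion. For $U=\eP^2\setminus C$ it contains
\[0=H^1(\eP^2,\cW)\to H^1(U,\cW)\to H^1(C,\cW(\det N_C))\cong H^1(\eP^1,\cW)\cong\Witt(\ek)\to H^2(\eP^2,\cW)=0.\]
The last group vanishes because $\omega_{\eP^2}=\cO(-3)$ is not a square. So $H^\ast(\SL_2/N,\cW)$ is $\Witt(\ek)^2$ in degree $0$ and $\Witt(\ek)$ in degree $1$. As a sanity check over $\eR$, $\eP^2(\eR)\setminus C(\eR)$ is a disc plus an open M\"obius band, which has $H^1\cong\eZ$.

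\emph{The spectral sequence does not degenerate.} Your intermediate claim that $H^\ast(BN,\cW)$ is free over $\Witt(\ek)[e]$ on $1,x_0$ is false. It would make $H^2(BN,\cW)$ of rank two and force $(1+x_0)e\neq 0$. That contradicts the statement, where $H^2(BN,\cW)=\Witt(\ek)\,x_2$ because $x_0x_2=-x_2$, and also your own Step 3. Saying ``up to relations'' cannot repair this, since a free module has none.

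What must actually happen is that the transgression
\[d_2\colon H^1(\SL_2/N,\cW)\to e\cdot H^0(\SL_2/N,\cW)\]
is injective with image $\Witt(\ek)\,(1+x_0)e$. Equivalently, in the model $BN\simeq\eP(\Symalg[2]{E})\setminus Q$ over $B\SL_2$, with $Q\cong\eP(E)$ the discriminant conic bundle, you must compute the connecting map between $H^{\ast-1}(Q,\cW(\det N_Q))$ and $H^\ast(\eP(\Symalg[2]{E}),\cW)$. That computation is the heart of the proposition. It yields at once the relation $(1+x_0)x_2=0$, the vanishing of $H^{\mathrm{odd}}(BN,\cW)$, and the absence of further relations. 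It is exactly the part you defer to an unspecified ``explicit computation''.

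\textbf{Step 3 is only partly usable.} The transfer/projection-formula argument does give $(1+x_0)x_2=0$, once two points are fixed.
\begin{itemize}
\item Justify $x_2|_{BT}=0$ correctly: it lies in $H^2(B\mathbb{G}_m,\cW)=0$. There is no ``odd-rank trivial summand'' in $\cO(1)\oplus\cO(-1)$.
\item Identify the transfer. For the \'etale double cover $BT\to BN$, $\mathrm{tr}(1)$ is the trace form $\qscal{2}(1+\qscal{\Delta})$. Here $\Delta$ is the discriminant, a nowhere-vanishing section of $\cO(2)$ off $Q$.
\end{itemize}
So you may take $x_0=\qscal{\Delta}$. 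This is an untwisted class whose residue generates $H^0(C,\cW)$, not a ``twisted unit'' coming from $\gamma$. Note that $\gamma^{\otimes 2}\simeq\cO$ identifies $\cW(\gamma^{\otimes 2})$ with $\cW$, not $\cW(\gamma)$ with $\cW$. Even with these fixes, this argument only shows the relation holds, not that nothing else is killed.

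Finally, $x_0^2=1$ cannot be ``checked after restriction to $BT$''. That restriction sends $x_0\mapsto 1$, so it is not injective in degree $0$ and detects nothing. With $x_0=\qscal{\Delta}$ the relation is immediate: $x_0^2=\qscal{\Delta^2}=1$.
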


\begin{rmq}
There is also a description for $H^\ast(BN,\cW(\gamma))$ with $\gamma$ a generator of the Picard group $\Picard{BN}$ up to quadratic equivalence. It is described explicitly in \cite[Theorem 5.1]{LevineAB24} but we don't need it for the computations here.
\end{rmq}

\begin{tho}[Theorem 7.1, \cite{Levine_witt-val}]\label{THO - classe euler N rpz}
Suppose $\mathrm{char}(\ek)$ is prime to $m$ if $m \neq 0.$ We define $\epsilon(m) =(-1)^{\prtent{\frac{m}{2}}}$. Then,
\[e^\cW(\Tilde{\cO^+}(m)) = \left\{\begin{array}{cc}
    \epsilon(m)\cdot m\cdot p^\ast e \in H^2(BN,\cW) & \si\ m\ \text{odd} \\
    -\epsilon(m)\frac{m}{2}\cdot\Tilde{e} \in H^2(BN,\cW(\gamma))& \si\ m\ \text{even}.
\end{array}\right.\]
Moreover, $\Tilde{e}^2 = 4p^\ast e^2$ (then an element of $H^4(BN,\cW)$) and $e^\cW(\Tilde{\cO}^-(m)) = -e^\cW(\Tilde{\cO^+}(m)).$
\end{tho}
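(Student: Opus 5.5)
The statement to prove is Theorem 7.1 of \cite{Levine_witt-val}, computing $e^\cW(\Tilde{\cO}^\pm(m))$.

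\textbf{Reduction to the generator.} I would first treat $m=1$. The representation $\rho_1^+$ is the restriction along $p : N\hookrightarrow \SL_2$ of the standard representation. Hence $\Tilde{\cO}^+(1) = p^\ast$ of the tautological bundle on $B\SL_2$. By naturality of the Euler class under pullback (the universal construction of Definition \ref{DEF - classe Euler equivariante} commutes with $p^\ast$), we get $e^\cW(\Tilde{\cO}^+(1)) = p^\ast e$. This is consistent with $\epsilon(1)=1$.

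For $\rho_m^-$, I would note that it is obtained from $\rho_m^+$ by changing the sign of $\sigma$, which amounts to twisting by the sign character. Concretely, compose with the automorphism of $\eA^2$ given by $\mathrm{diag}(1,-1)$ conjugating one action to the other, up to an orientation-reversing change of basis. An orientation-reversing isomorphism of $\SL_2$-bundles (determinant $-1$) multiplies the Witt Euler class by $\qscal{-1}=-1$ in $\Witt$. This gives $e^\cW(\Tilde{\cO}^-(m))=-e^\cW(\Tilde{\cO}^+(m))$. Remark \ref{RMQ - ordre base rpz impair} fixes the basis order, and hence the sign.

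\textbf{General $m$.} The plan is to compare $\rho_m$ with $\rho_1$ through the $m$-th power map on the torus. For odd $m$, consider the map of bundles $\eA^2\to\eA^2$, $(x,y)\mapsto(x^m,y^m)$. It is $N$-equivariant from $\rho_1$ to $\rho_m^{\pm}$ (the sign of $\sigma$ being $(-1)^m$ on the second coordinate), finite of degree $m^2$, and sends zero section to zero section. The Euler class pulls back along a finite flat map as its Witt-valued local degree times the original class. Equivalently, since $H^2(BN,\cW)$ is detected by the transfer to $BT$ together with restriction to $B\SL_2$-type data, the coefficient is the $\eA^1$-degree of $(x,y)\mapsto (x^m,y^m)$. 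That degree is $\deg^{\eA^1}(x^m)^{\otimes 2}$, equal to $(\tfrac{m-1}{2}h+1)^2$, with Witt image $1$. The sign $\epsilon(m)$ and the factor $m$ should then come from the non-identity action of $\sigma$: one computes on the sign-twisted piece, using $H^\ast(BN,\cW)$ from Proposition \ref{PROP - cohomologie BN} and the relation $(1+x_0)x_2=0$.

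I expect this step to be the real obstacle. Getting $\epsilon(m)\, m$ rather than $1$ requires working with $\Witt$ together with its $x_0$-twisting, not just with $\GW$-degrees. I would first try to reduce via restriction to the finite subgroup generated by $\sigma$ and a primitive $m$-th root of unity, where everything is explicit, and check injectivity of this restriction on $H^2$ when $\mathrm{char}\,\ek\nmid m$.

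\textbf{Even $m$.} Here $\rho_m(\sigma)$ has determinant $1$, but $\rho_m$ factors through $N/\mu_2$-type quotients. The bundle is then naturally oriented only after twisting by $\gamma$, so its class lives in $H^2(BN,\cW(\gamma))$. I would define $\Tilde e := e^\cW(\Tilde{\cO}^+(2))$ up to the sign, and handle general even $m=2k$ by the same power map from $\rho_2$. This yields $\pm k\,\Tilde e$ with sign $-\epsilon(m)$.

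\textbf{The relation $\Tilde e^2=4p^\ast e^2$.} This follows by computing the Euler class of $\Tilde{\cO}(2)\oplus\Tilde{\cO}(2)$ in two ways. First, by the Whitney formula it is $\Tilde e^2$. Second, identify it as $\mathrm{Sym}^2$-related data of the standard representation, restricted from $B\SL_2$ using the splitting principle. The result is $4p^\ast e^2$ in $H^4(BN,\cW)$.
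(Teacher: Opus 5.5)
The paper gives no proof of this statement; it imports it from Levine. So I judge your proposal on its own terms, and it has a genuine gap at exactly the step that carries the content.

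\textbf{What works.} The case $m=1$ is fine. The comparison of $\rho_m^-$ with $\rho_m^+$ is also fine: $\mathrm{diag}(1,-1)$ commutes with $\rho_m(t)$ and conjugates $\rho_m(\sigma)$ into $\rho_m^-(\sigma)$. It is therefore an $N$-equivariant isomorphism of determinant $-1$, and it multiplies the Witt Euler class by $\qscal{-1}=-1$. The power map $f_m(x,y)=(x^m,y^m)$ is indeed $N$-equivariant from $\rho_1$ to $\rho_m^+$. Pulling back Thom classes then gives $e^\cW(\Tilde{\cO}^+(m))=u\cdot p^\ast e$, so the route is viable.

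\textbf{The main gap: what $u$ is.} It is a class in $H^0(BN,\cW)\simeq\Witt(\ek)\oplus\Witt(\ek)\,x_0$, not an element of $\Witt(\ek)$. The quantity $\deg^{\eA^1}(x^m)^2$, of Witt class $1$, is only its value at the trivial $N$-torsor. Since $x_0x_2=-x_2$, the product $u\cdot x_2$ depends only on the specialisation $x_0\mapsto -1$ of $u$. That specialisation is the value at an anisotropic torsor, for example the point $[x^2+y^2]$ of $\SL_2/N\simeq\eP^2\setminus C$, where the torus becomes the norm-one torus of $L=\ek(\sqrt{-1})$. Over $\eR$ this says that $p^\ast e$ lives on the $BSO(2)$-component of the real points and vanishes rationally on the split component.

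So the value you computed is irrelevant. Taken at face value, your argument gives $e^\cW(\Tilde{\cO}^+(m))=p^\ast e$ for every odd $m$. What is missing is the local computation at the anisotropic point:
\begin{itemize}
\item There the twisted $\rho_m$ is $R_{L/\ek}\eA^1$, with $t$ acting by $z\mapsto t^m z$, and $f_m$ becomes $z\mapsto z^m$.
\item Its Eisenbud--Khimshiashvili--Levine form, in the $\ek$-coordinates $(x,y)$ with $z=x+\sqrt{-1}\,y$, has rank $m^2$ but Witt class $m$. The monomials $z^a\bar z^{m-1-a}$ span a piece contributing $m\qscal{1}$, and all other monomials pair off hyperbolically.
\item Comparing the descended bases with $e_1\wedge e_2$, for $\rho_m$ and for $\rho_1$, contributes the factor $\qscal{(-1)^{(m-1)/2}}=\epsilon(m)$.
\end{itemize}

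\textbf{Your fallback cannot work.} Restricting to the subgroup generated by $\sigma$ and $\mu_m$ loses the information. First, $\rho_m$ is trivial on that $\mu_m$, since $\rho_m(\mathrm{diag}(\zeta,\zeta^{-1}))=\mathrm{diag}(\zeta^m,\zeta^{-m})=1$. More fundamentally, for $\ek=\eR$ the group $H^2(BN,\cW)=\Witt(\eR)\,x_2\simeq\eZ$ is torsion-free. Positive-degree Witt cohomology of the classifying space of a finite group is torsion, so no such restriction is injective on $H^2$. Likewise $H^{>0}(BT,\cW)=0$, so nothing is detected by transfer to $BT$.

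\textbf{The even case and $\Tilde e^2$.} You state that $\rho_m(\sigma)$ has determinant $1$, but $\rho_m(\sigma)=\begin{pmatrix}0&1\\1&0\end{pmatrix}$ has determinant $-1$. The isomorphism $\det\Tilde{\cO}(m)\simeq\gamma$ is precisely why the class lives in $H^2(BN,\cW(\gamma))$. The sign $-\epsilon(m)$ and the factor $m/2$ are asserted, not derived. They require the same anisotropic computation for $(x,y)\mapsto(x^{m/2},y^{m/2})$ from $\rho_2$ to $\rho_m$.

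Finally, $\Tilde e^2=4p^\ast e^2$ is not argued. The identity $\operatorname{Sym}^2\rho_1\simeq\rho_2\oplus\gamma$ involves an odd-rank bundle, so it yields no relation. One can apply the power-map argument to $\rho_1^{\oplus 2}\to\rho_2^{\oplus 2}$. You must then fix the identification $\cW(\gamma^{\otimes 2})\simeq\cW$. The trivialisation coming from the character and the canonical square-twist isomorphism differ by a unit of $H^0(BN,\cW)$ that equals $-1$ at the anisotropic point, and this flips the sign of the answer.
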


\subsection{Atiyah-Bott formula}
We will here shorten a bit the statements of \cite{LevineAB24} to give them in the setup we need. In particular, we will only look at smooth schemes and the action of $N.$ So, by \cite[Remark 9.6]{LevineAB24}, most of the hypothesise are automatically checked.

Instead of considering the sub-scheme $X^N$ of $N-$invariant points in a scheme $X$ with $N-$action, we will look at

\begin{deff}[Weakly $N-$invariant points]\label{DEF - points invariants pour N}
Let $T_1$ be the torus of $SL_2$. Let $\vabs{X}^N$ be the set of irreducible components $Z$ of $X^{T_1}$ such that $\Bar{\sigma}Z = Z$ where $\Bar{\sigma}$ is the class of $\sigma$ in $N/T_1$.
\end{deff}

\begin{prop}[Theorem 8.7 in \cite{LevineAB24}]\label{PROP - atiyah-bott loc}
Let $X$ be a $\ek-$scheme with a semi-strict action of $N$. We note $i : \vabs{X}^N \hookrightarrow X$ the inclusion. Let $\cL$ be a $N-$linearised invertible sheaf on $X$. Then, there exists a positive integer $M$ such that 
\[i_\ast : H^{B.M.}_{N,\ast}(\vabs{X}^N,\cW(i^\ast \cL))[(Mp^\ast e)^{-1}] \isomr H^{B.M.}_{N,\ast}(X,\cW(\cL))[(Mp^\ast e)^{-1}].\]
\end{prop}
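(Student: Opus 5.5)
The strategy follows the classical Atiyah--Bott/Quillen localisation argument, transported to $N$-equivariant Witt cohomology through the six functors formalism of Theorem \ref{THO - six foncteurs equivariants}. Write $Z := \vabs{X}^N$ with inclusion $i : Z \hookrightarrow X$ and open complement $j : U := X\setminus Z \hookrightarrow X$. Because the $N$-action is semi-strict, $Z$ is closed and stable under $N$: it is the union of the components of $X^{T_1}$ preserved by $\Bar\sigma$, together with the components that $\Bar\sigma$ exchanges in pairs, and these pairs should be absorbed into $U$ or handled in the same way as the free part. The localisation triangle $i_\ast i^! \to \Id \to j_\ast j^\ast$ in $\StableHo[N]{X}$, after applying the Borel--Moore functor with coefficients in $\cW(\cL)$, gives a long exact sequence
\[\cdots \to H^{B.M.}_{N,\ast}(Z,\cW(i^\ast\cL)) \xrightarrow{i_\ast} H^{B.M.}_{N,\ast}(X,\cW(\cL)) \to H^{B.M.}_{N,\ast}(U,\cW(\cL)) \to \cdots\]
of modules over $H^\ast(BN,\cW)$. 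Localisation at a multiplicative set is exact, so it is enough to show that $H^{B.M.}_{N,\ast}(U,\cW(\cL))$ is annihilated by some power of $M p^\ast e$ for a suitable integer $M>0$.

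To prove this, I would use Thomason-style generic slice theorems. Since $U$ is noetherian, it has a finite $N$-stable stratification by locally closed $N$-stable pieces $U_\alpha$. On each piece, either the stabilisers contain no element of $T_1$ beyond a finite subgroup $\mu_{m}$, or $T_1$ acts trivially but $\Bar\sigma$ moves the component. In the first case, $U_\alpha$ is, locally for the Nisnevich topology, of the form $N\times^{H}V$ with $H$ a finite subgroup (or $\mu_m$-extension) of $N$. Its equivariant cohomology is then the $H$-equivariant cohomology of $V$, and the restriction of $p^\ast e$ to $BH$ is torsion: it is killed by $m$, because $e$ of the weight-$m$ representation equals $\pm m\, p^\ast e$ by Theorem \ref{THO - classe euler N rpz}, and this representation has a nowhere vanishing section on $U_\alpha$. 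In the second case, $U_\alpha = N\times^{T_1} V$, and here I would use that $p^\ast e$ restricted to $BT_1$ vanishes in Witt cohomology, since $\cW$-cohomology of $BT_1 = B\mathbb{G}_m$ is concentrated in degree $0$ and $(1+x_0)x_2=0$. Combining the strata by induction through the localisation sequences, and taking $M$ to be the least common multiple of the $m$'s that appear together with a uniform exponent, shows that $(Mp^\ast e)^K$ kills $H^{B.M.}_{N,\ast}(U,\cW(\cL))$ for some $K$.

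The main obstacle is this last step: proving rigorously that $e^\cW$ of a nontrivial $T_1$-weight becomes nilpotent, with controlled exponent, on the equivariant cohomology of a piece with finite stabiliser. The Witt twists $\cL$, and possibly $\gamma$, must also be tracked in the process. My first attempt would be to reduce to the model case $\eA^2\setminus\{0\}$ with the representation $\rho_m^{\pm}$, where the Euler class is $\pm m\,p^\ast e$ and the complement of the zero section kills it, and then to propagate this through $N$-equivariant vector bundle torsors by homotopy invariance. This is where I expect the paper to rely on \cite{LevineAB24} rather than prove it from scratch.
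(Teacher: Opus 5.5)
The paper gives no proof of this statement; it quotes \cite[Theorem 8.7]{LevineAB24} verbatim. Your architecture is the right kind of argument: the localisation sequence for $i$, exactness of localisation, reduction to showing that $H^{B.M.}_{N,\ast}(X\setminus\vabs{X}^N,\cW(\cL))$ is killed by a power of $Mp^\ast e$, then Noetherian induction over $T_1$-fixed and finite-stabiliser strata. However, two steps do not go through as written, and the second is exactly the one you leave open.

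\textbf{The $T_1$-fixed part of $U=X\setminus\vabs{X}^N$.} Your dichotomy tacitly assumes this part is $N$-equivariantly of the form $N\times^{T_1}V$. That requires a component $Z'$ of $X^{T_1}$ with $\Bar\sigma Z'\neq Z'$ never to meet $\Bar\sigma Z'$ outside $\vabs{X}^N$, and this fails for general $X$. Take two lines meeting in a point, with $T_1$ acting trivially and $\sigma$ swapping them. Then $\vabs{X}^N=\emptyset$, yet the meeting point is $N$-fixed. The localisation sequence gives $H^{B.M.}_{N,\ast}(X,\cW)[(Mp^\ast e)^{-1}]\simeq H^\ast(BN,\cW)[(Mp^\ast e)^{-1}]$, which is non-zero for $\ek=\eR$. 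So a hypothesis excluding this configuration is needed exactly here. This is what semi-strictness has to supply. Smoothness of $X$, the only case used in the paper, also supplies it, since $X^{T_1}$ is then smooth with disjoint components. In your sketch, semi-strictness is only used to say that $\vabs{X}^N$ is closed and $N$-stable, which holds for any action. Once disjointness is available, your argument there is fine: $H^{B.M.}_{N,\ast}(N\times^{T_1}V)\simeq H^{B.M.}_{T_1,\ast}(V)$, and $p^\ast e$ dies because $H^2(B\mathbb{G}_m,\cW)=0$. The relation $(1+x_0)x_2=0$ plays no role.

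\textbf{The finite-stabiliser part.} Nisnevich-local models $N\times^H V$ are not available. Slice theorems give only étale-local models, and $\cW$-cohomology has no étale descent. More importantly, the nowhere vanishing section of the weight-$m$ representation is asserted but never produced, and nothing explains why it exists on an arbitrary stratum. The missing idea is to build it on a generic slice.
\begin{itemize}
\item By Thomason's generic slice theorem, a stratum with finite $T_1$-stabilisers contains a non-empty $T_1$-stable open $U'\simeq (T_1/\mu_m)\times Y$.
\item Since $\sigma t\sigma^{-1}=t^{-1}$ and $\sigma^2=-1\in T_1$, the open $U'\cap\sigma U'$ is $N$-stable and still of this form. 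If it is empty, then $U'\sqcup\sigma U'\simeq N\times^{T_1}U'$ and you are back in the previous case.
\item Let $u$ be the projection to $T_1/\mu_m\simeq\mathbb{G}_m$, so that $u(tx)=t^m u(x)$. Then $s=(u,u\circ\sigma)$ is a nowhere vanishing $N$-equivariant section of the bundle given by $\rho_m$, so its equivariant Euler class vanishes there.
\end{itemize}
By Theorem \ref{THO - classe euler N rpz} (for $m$ prime to the characteristic), this gives $mp^\ast e=0$ when $m$ is odd. When $m$ is even it gives only $\frac m2\Tilde e=0$, hence $(mp^\ast e)^2=\frac{m^2}{4}\Tilde e^2=0$; so your claim that $m$ kills $p^\ast e$ is wrong in that case. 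Noetherian induction through the localisation sequences, where the exponents add, then yields a uniform power of $Mp^\ast e$, with $M$ the product of the $m$'s that occur. No computation of the Witt cohomology of finite groups is needed.
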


\begin{rmq}
Inverting $Me$ in every homology group has to be understood as inverting it at the level of coefficients in the base ring. We see the homology groups as modules over $H^{BM}_\ast(BN,\cW) \simeq H^{BM}_{N,\ast}(\Spec{\ek},\cW).$
\end{rmq}

To give the theorem localisation theorem and an idea of how it works, we give two key lemmas \cite[Lemma 9.1 and 9.3]{LevineAB24}:

\begin{lemme}\label{LEM - triangle commutatif produit classe Euler}
Let $\cL$ be an invertible sheaf on $X$ and $i : Y \to X$ a regular $N-$embedding of codimension $r$. The composition
\[i^!i_\ast : H^{B.M.}_{N,\ast}(Y,\cW(i^\ast \cL)) \to H^{B.M.}_{N,\ast-r}(Y,\cW(i^\ast \cL\otimes \det \cN_i))\]
is in fact the intersection product with $e(\cN_i) \in H^r _N(Y,\cW(\det \cN_i))$ with $\cN_i$ the conormal sheaf to $i.$
\end{lemme}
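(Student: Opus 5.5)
The plan is to check the identity on the level of equivariant Witt-valued Borel--Moore cycles, using the six functors formalism of Theorem \ref{THO - six foncteurs equivariants}. By definition, $i_\ast$ is proper push-forward (Proposition \ref{PROP - covariance propre}, in its $N$-equivariant form) and $i^!$ is the Gysin pull-back coming from the unit $\mathrm{Id}\to i^!i_!$. Since $i$ is a closed immersion, it is proper, so $i_!\simeq i_\ast$. The composition $i^!i_\ast$ is therefore induced by applying the unit $\eta:\mathrm{Id}\to i^!i_!$ to $i^\ast$ of the coefficient spectrum, twisted by $\cL$. The projection formula (Theorem \ref{THO - formalisme six foncteurs SH}(4)) lets me pull the invertible twist $\cW(i^\ast\cL)$ out of the construction. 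The statement then reduces to the untwisted case, tensored with $\Thom[Y]{i^\ast\cL}$.

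The core step is deformation to the normal cone, carried out $N$-equivariantly. The embedding $Y\hookrightarrow X$ deforms over $\eA^1$ to the zero section $s:Y\hookrightarrow \eV(\cN_i)$. The deformation space $D_YX$ carries an $N$-action because $i$ is an $N$-embedding, with $\eA^1$ given the trivial action. By $\Au$-invariance (homotopy invariance in $\StableHo[N]{-}$), the natural transformation $i^!i_!$ is identified with $s^!s_!$, compatibly with the purity isomorphism $\fp_i$ of the remark following Theorem \ref{THO - purete version six foncteurs}. This identification is the usual construction of the purity isomorphism, and Hoyois' equivariant formalism provides it. For the zero section, Definition \ref{DEF - classe Euler equivariante} says that the composite
\[\eS_Y\to s^!s_!\eS_Y\xrightarrow{\fp_s}\Thom[Y]{\cN_i}\otimes s^\ast s_\ast\eS_Y\to\Thom[Y]{\cN_i}\]
is exactly $e_N(\cN_i)$. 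Tensoring with an arbitrary class $\alpha$ and using the module structure of $s^!s_!$ over $\eS_Y$, I get $s^!s_\ast\alpha=e(\cN_i)\cdot\alpha$. Passing to $\cW$-coefficients through the $\SL^c$-orientation of the Witt spectrum then turns $\Thom[Y]{\cN_i}$ into the shift by $r$ twisted by $\det\cN_i$. This gives the stated degrees and twists.

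I expect the main obstacle to be the equivariant deformation to the normal cone: its compatibility with the unit and counit maps, and the verification that the resulting class coincides with Definition \ref{DEF - classe Euler equivariante} rather than differing by a unit or a sign. Sign conventions for the Thom twist under the $\SL^c$-orientation (for instance $\det\cN_i$ versus its dual, which are quadratically equivalent) also need care. My fallback is to cite the non-equivariant statement \cite[Lemma 9.1]{LevineAB24} (equivalently the self-intersection formula of \cite{DJK21}) and to observe that every construction in it is functorial for $N$-schemes. I would then apply it on the finite-dimensional approximations $X\times^N E_nN$ of the Borel construction, where $i$ remains a regular embedding with normal bundle descended from $\cN_i$. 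Since equivariant Witt cohomology is computed as a limit over these approximations, the formula passes to the limit.
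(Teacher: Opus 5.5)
The paper does not prove this lemma. It quotes it verbatim from \cite[Lemma 9.1]{LevineAB24} as background for the Bott residue formula. Your fallback is therefore essentially what the paper does. Note, however, that \cite[Lemma 9.1]{LevineAB24} is already the $N$-equivariant statement, not a non-equivariant one. The genuinely non-equivariant input you want is the self-intersection formula: the excess intersection formula of \cite{DJK21} applied to the square $Y\times_X Y=Y$, whose excess bundle is $\cN_i$.

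Your main route is a genuinely different, self-contained argument: equivariant deformation to the normal cone, reducing to the zero section $s:Y\to\eV(\cN_i)$. It is the standard proof of that formula. It explains where $e(\cN_i)$ comes from, at the cost of checking that specialization is compatible with $i_\ast$ and $i^!$. Also, the groups $H^{B.M.}_{N,\ast}$ and $H^\ast_N$ in the statement are Levine's Borel-type groups. So it is your approximation argument over $X\times^N E_nN$ that actually reaches them. The version in Hoyois' genuine category would additionally need a comparison with the Borel theory.

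One point in your first paragraph must be corrected, because as written it gives the wrong answer. For a closed immersion, $i_!=i_\ast$ is fully faithful, so the unit $\mathrm{Id}\to i^!i_!$ is an isomorphism. A composite built from it would make $i^!i_\ast$ an isomorphism of degree $0$. The correct ingredients are these:
\begin{itemize}
\item The Gysin map $i^!$ comes from the fundamental class (purity) $\eta_i:\Thom[Y]{-\cN_i}\to i^!\eS_X$.
\item $i_\ast$ on Borel--Moore groups comes from the counit $i_!i^!\to\mathrm{Id}$.
\end{itemize}
Unwinding the adjunctions, $i^!i_\ast$ is cap product with the composite
\[
\Thom[Y]{-\cN_i}\xrightarrow{\eta_i} i^!\eS_X\longrightarrow i^!i_\ast\eS_Y\simeq\eS_Y .
\]
Here the middle arrow is $i^!$ of the unit of $i^\ast\dashv i_\ast$, and the last isomorphism is the inverse of the unit of $i_!\dashv i^!$. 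It is this class that deformation to the normal cone identifies with the corresponding class for the zero section, namely $e(\cN_i)$.

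Similarly, you should not identify the functors $i^!i_!$ and $s^!s_!$, since they live over different bases. Phrase the deformation step instead as: the specialization map intertwines $i_\ast$ with $s_\ast$ and $i^!$ with $s^!$.
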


\begin{lemme}\label{LEM - classe generique et inversibilite}
Let $\cV$ be a locally free $N-$sheaf of even rank on $Y \in \mathbf{Sm}^N_\ek$ a connected scheme. There exists a generic class $e^{gen}(\cV)$ in $H^{\ast}(BN,\cW)$ and $e_N(\cV)$ is invertible in $H^\ast_N(Y,\cW(\det \cN_i))[e^{gen}(\cV)^{-1}].$
\end{lemme}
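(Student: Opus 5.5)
The plan is to reduce the statement to the case of a point via a splitting and filtration argument, following \cite{LevineAB24}. Take $Y$ connected in $\mathbf{Sm}^N_\ek$, and assume, as in the situation where the lemma is applied (the normal sheaf to a component of $\vabs{X}^N$), that $T_1$ acts trivially on $Y$. Then $\cV$ splits canonically into $T_1$-weight spaces $\cV=\bigoplus_{m}\cV_m$. The element $\sigma$ exchanges $\cV_m$ and $\cV_{-m}$, so for $m>0$ the sheaf $\cV_m\oplus\cV_{-m}$ is an $N$-stable summand. Since $Y$ is connected, the ranks $r_m=\operatorname{rk}\cV_m$ are constant.

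On the generic fibre, or over a closed point $y\in Y$ after passing to the residue field, the restriction of $\cV_m\oplus\cV_{-m}$ is a sum of $r_m$ copies of $\rho_m^{\pm}$ from Definition \ref{DEF - N rpz}. The basis is ordered by decreasing weight, as in Remark \ref{RMQ - ordre base rpz impair}. I then define
\[e^{gen}(\cV):=\prod_{m>0}\left(e^\cW(\Tilde{\cO}^{+}(m))\right)^{r_m},\]
computed by Theorem \ref{THO - classe euler N rpz}. This is, up to sign, a product of factors $m\,p^\ast e$ for odd $m$ and $\frac m2\tilde e$ for even $m$. Using $\tilde e^{2}=4p^\ast e^{2}$, a suitable power of it, and hence of $e^{gen}(\cV)$, is a unit multiple of $M p^\ast e^{k}$. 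So inverting $e^{gen}(\cV)$ is compatible with the localisation in Proposition \ref{PROP - atiyah-bott loc}.

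For invertibility, let $\pi:Y\to\Spec\ek$ and use the Whitney formula for $N$-equivariant Euler classes over the decomposition above. This reduces the claim to a single summand $\cV_m\oplus\cV_{-m}$. On such a summand I compare $e_N(\cV_m\oplus\cV_{-m})$ with $\pi^\ast e^\cW(\Tilde{\cO}^\pm(m))^{r_m}$. The key claim is that their difference lies in the positive part of the filtration of $H^\ast_N(Y,\cW)$ by cohomological degree along $Y$. That filtration is the one coming from the Leray-type spectral sequence for $Y\times^N EN\to BN$. Concretely, the difference restricts to zero at every point, by the rigidity of $N$-representations under deformation. This filtration is finite because $Y$ has finite Krull dimension: any product of more than $\dim Y+1$ elements of positive filtration vanishes. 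Hence $e_N(\cV)=\pi^\ast e^{gen}(\cV)\cdot(1+u)$ with $u$ nilpotent after $e^{gen}$ is inverted. Then $(1+u)^{-1}=\sum(-u)^i$ is a finite sum, and $e_N(\cV)$ is invertible in $H^\ast_N(Y,\cW(\det\cV))[e^{gen}(\cV)^{-1}]$. The twist by $\det\cV$ is handled by noting that $\det(\cV_m\oplus\cV_{-m})$ is trivial up to the character $\gamma$, tracked exactly as in Theorem \ref{THO - classe euler N rpz}.

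I expect the main obstacle to be making the factorisation $e_N(\cV)=\pi^\ast e^{gen}(\cV)(1+u)$ precise. $\cW$-cohomology is not a nicely graded-commutative ring with a K\"unneth decomposition, and the twists by $\gamma$ mix the two parts $H^\ast(BN,\cW)$ and $H^\ast(BN,\cW(\gamma))$. My first approach would be to use the splitting principle on an $N$-equivariant flag or projective bundle over $Y$ to reduce to sums of rank-$2$ bundles of type $\Tilde{\cO}^\pm(m)\otimes\cL$, with $\cL$ a line bundle from $Y$. For these I would compute the Euler class directly as $\pm e^\cW(\Tilde{\cO}^\pm(m))+(\text{terms involving }c_1(\cL))$, the latter being of positive filtration and therefore nilpotent.
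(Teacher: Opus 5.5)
The paper gives no proof of this lemma. It is quoted from \cite[Lemma 9.3]{LevineAB24}. In its only use (Proposition \ref{PROP - residu de Bott specialise points fixes}, where $Y$ is an $N$-fixed point) it is tautological: $e_N(\cV)$ is then the Euler class of an $N$-representation, and one can take $e^{gen}(\cV)=e_N(\cV)$.

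For positive-dimensional $Y$, your sketch leaves the decisive step unproved, namely $e_N(\cV)=\pi^\ast e^{gen}(\cV)(1+u)$ with $u$ nilpotent. The justification you give does not work as written.
\begin{itemize}
\item The Leray filtration of $Y\times^N EN\to BN$ measures degree along the base $BN$, not along $Y$. Because $BN$ is infinite-dimensional, the filtration it induces on $H^n$ has length growing with $n$. So $\dim Y$ gives no uniform bound, and powers of $e_N(\cV)-\pi^\ast e^{gen}(\cV)$ need not vanish.
\item When $N$ acts trivially on $Y$, the filtration you want is the Leray filtration $L^\bullet$ of the projection $Y\times BN\to Y$. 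It is multiplicative and satisfies $L^{\dim Y+1}=0$. By Gersten injectivity for the strictly $\Au$-invariant sheaves associated with $U\mapsto H^q(U\times BN,\cW)$, $L^1$ is the kernel of restriction to the generic point.
\item That the difference lies in $L^1$ comes from local freeness, not from ``rigidity under deformation''. Over $\cO_{Y,y}$, a basis $v_i$ of $\cV_m$ together with the $\sigma v_i$ splits $\cV_m\oplus\cV_{-m}$ into copies of $\rho_m$.
\item Since $\mathrm{Aut}_N(\rho_m^{\oplus r})=\Gl_r$ acts with square determinants, one gets a canonical identification $\det\cV\simeq\det\bigl(\bigoplus_m\rho_m^{\oplus r_m}\bigr)\otimes\cM^{\otimes 2}$. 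This is what makes $e_N(\cV)-\pi^\ast e^{gen}(\cV)$ meaningful in the first place.
\item Your fallback with ``terms involving $c_1(\cL)$'' is not available. $\cW$-cohomology is only $\SL^c$-oriented and has no first Chern class.
\end{itemize}

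You also use two hypotheses silently.
\begin{itemize}
\item Your $e^{gen}(\cV)$ drops the weight-zero summand $\cV_0=\cV^{T_1}$, so you need $\cV^{T_1}=0$. This is automatic for the normal sheaf of a component of $X^{T_1}$. Without it the claim fails for your definition: $\cV=\cO_Y^{\oplus 2}$ with trivial action gives $e^{gen}(\cV)=1$ but $e_N(\cV)=0$.
\item More seriously, you assume only that $T_1$ acts trivially on $Y$. Everything afterwards needs $\sigma$ to act trivially as well: reading off $\rho_m^{\pm}$ at a point, $\cV_m\cong\cV_{-m}$ as $\cO_Y$-modules, $Y\times^N EN=Y\times BN$, and the generic-point criterion.
\end{itemize}
The components of $\vabs{X}^N$ to which the lemma is applied in Theorem \ref{THO - residu Bott} are only required to be $\sigma$-stable. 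That is exactly why Corollary \ref{COR - simplification AB} has to assume $\vabs{X}^N=X^N$. When $\sigma$ acts nontrivially on $Y$, stabilisers vary: points in free $\sigma$-orbits have stabiliser $T_1$, and $p^\ast e$ dies in $H^\ast(BT_1,\cW)$. In addition, $\sigma$ acts on the generic point through a nontrivial involution of $\ek(Y)$. So neither your pointwise comparison nor the Leray argument over $Y$ goes through, and that case needs a separate argument.
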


\begin{tho}[Theorem 9.5 \cite{LevineAB24}]\label{THO - residu Bott}
Let $X$ be a smooth scheme in $\mathbf{Sm}^N_\ek$ and $\cL$ be an invertible $N-$linearised sheaf on $X$. We write $i_j : \vabs{X}_j^N \to X$ for the regular embedding of connected components of $\vabs{X}^N$ and $\cN_{i_j}$ conormal sheaf associated to $i_j$. We write $e^{gen}$ for the product of the $e_N^{gen}(N_{i_j})$. Lastly, let $M$ be a positive integer that is a multiple of the exponential characteristic of the base field $\ek$. If $char(\ek)=0$ or $\dim_\ek(\vabs{X}^N) = 0$, then:
\[H^{B.M.}_{N,\ast}(\vabs{X}^N,\cW(i^\ast \cL))[(Mp^\ast e)^{-1}(e^{gen})^{-1}] \cong \prod_j H^{B.M.}_{N,\ast}(\vabs{X}^N_j,\cW(i^\ast \cL))[(Mp^\ast e)^{-1}(e^{gen}(N_{i_j}))^{-1}]\]
and the inverse Atiyah-Bott localisation
\[i_\ast : H^{B.M.}_{N,\ast}(\vabs{X}^N,\cW(i^\ast \cL))[(Mp^\ast e)^{-1}( e^{gen})^{-1}] \isomr H^{B.M.}_{N,\ast}(X,\cW(\cL))[(Mp^\ast e)^{-1}(e^{gen})^{-1}]\]
is given by the Bott residue formula:
\[x \mapsto \prod_j i_j^!(x) \cap e_N(\cN_{i_j})^{-1}.\]
\end{tho}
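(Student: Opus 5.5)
The statement is Theorem 9.5 of \cite{LevineAB24}, recalled in the simplified setting of smooth $N-$schemes. My plan is to deduce it from the localisation isomorphism of Proposition \ref{PROP - atiyah-bott loc}, combined with Lemmas \ref{LEM - triangle commutatif produit classe Euler} and \ref{LEM - classe generique et inversibilite}. There are three steps.

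\textbf{Step 1: the product decomposition.} Since $\vabs{X}^N$ is the disjoint union of its connected components $\vabs{X}^N_j$, Borel--Moore homology is additive: the pushforwards along the open and closed inclusions $\vabs{X}^N_j \hookrightarrow \vabs{X}^N$ give $H^{B.M.}_{N,\ast}(\vabs{X}^N,\cW(i^\ast\cL)) \simeq \prod_j H^{B.M.}_{N,\ast}(\vabs{X}^N_j,\cW(i_j^\ast\cL))$. Localisation commutes with finite products. Inverting $e^{gen}$ on the $j$-th factor is the same as inverting each $e^{gen}(N_{i_j})$, because the remaining factors $e^{gen}(N_{i_k})$ are already units in $H^\ast(BN,\cW)[(Mp^\ast e)^{-1}]$. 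I would check that last claim on the explicit ring of Proposition \ref{PROP - cohomologie BN}, where these generic classes are multiples of powers of $p^\ast e$.

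\textbf{Step 2: $i_\ast$ is still an isomorphism.} By Proposition \ref{PROP - atiyah-bott loc}, $i_\ast$ becomes an isomorphism after inverting $Mp^\ast e$. Further localisation at $e^{gen}$ preserves this.

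\textbf{Step 3: the inverse is the residue formula.} Fix $j$. I would compute $i_j^! i_\ast$ on the summand indexed by $k$.
\begin{itemize}
\item For $k\neq j$, the components are disjoint, so $i_j^! i_{k\ast}=0$ by base change (Theorem \ref{THO - six foncteurs equivariants}) applied to the empty fibre product.
\item For $k=j$, Lemma \ref{LEM - triangle commutatif produit classe Euler} gives $i_j^! i_{j\ast} = -\cap e_N(\cN_{i_j})$. The regularity of $i_j$ needed there comes from smoothness of $X$ and of the fixed loci.
\item Lemma \ref{LEM - classe generique et inversibilite} makes $e_N(\cN_{i_j})$ invertible after inverting $e^{gen}(N_{i_j})$. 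This needs $\cN_{i_j}$ to have even rank, which I would argue from the $T_1$-weights on the normal bundle: they are nonzero and paired by $\sigma$ into $\pm m$, so the normal representation splits into rank-two pieces of the form $L(m)$ from Definition \ref{DEF - N rpz}.
\end{itemize}
Hence the map $x\mapsto \left(i_j^!(x)\cap e_N(\cN_{i_j})^{-1}\right)_j$ is a left inverse of $i_\ast$. Since $i_\ast$ is bijective by Step 2, it is the inverse.

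\textbf{Main obstacle.} The delicate point is the hypothesis "$\mathrm{char}(\ek)=0$ or $\dim\vabs{X}^N=0$". Both the purity and excess-intersection identity and the evenness of $\cN_{i_j}$ require each $\vabs{X}^N_j$ to be smooth, with normal bundle carrying only nonzero weights. In characteristic $0$, I would get this from smoothness of torus fixed loci, Bialynicki-Birula style. In the zero-dimensional case it is automatic, the fixed components being reduced points (after the $\sigma$-stability of Definition \ref{DEF - points invariants pour N}). I expect to defer to \cite[Remark 9.6]{LevineAB24} for the remaining semi-strictness conditions.
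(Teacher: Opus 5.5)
Your route is the one the paper points to. The paper gives no proof of its own: it cites \cite{LevineAB24} and singles out Proposition~\ref{PROP - atiyah-bott loc} and Lemmas~\ref{LEM - triangle commutatif produit classe Euler}, \ref{LEM - classe generique et inversibilite}. Steps 2 and 3 are sound, including $i_j^!i_{k\ast}=0$ for $k\neq j$, the left-inverse argument, and the evenness of $\cN_{i_j}$ from $\sigma$ pairing the weights $\pm m$.

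What fails as justified is the claim in Step 1 that the $e^{gen}(\cN_{i_k})$ are already units in $H^\ast(BN,\cW)[(Mp^\ast e)^{-1}]$. Once $x_2=p^\ast e$ is inverted, the relation $(1+x_0)x_2=0$ forces $x_0=-1$, so this ring is $\Witt(\ek)[1/M][x_2^{\pm 1}]$.
\begin{itemize}
\item By Theorem~\ref{THO - classe euler N rpz}, an odd weight $m$ contributes $\pm m\,x_2$. This is a unit only if $m$ is invertible in $\Witt(\ek)[1/M]$; for $\ek=\eR$, only if $m$ divides a power of $M$.
\item An even weight contributes a twisted class $\tilde e\in H^2(BN,\cW(\gamma))$. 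This is not a multiple of a power of $p^\ast e$, and it is invertible only if $2$ is, since $\tilde e^2=4p^\ast e^2$.
\end{itemize}
So inspecting the ring of Proposition~\ref{PROP - cohomologie BN} cannot settle the claim. You would need a geometric input tying the weights of the $\cN_{i_k}$ to the integer $M$ produced by the localisation theorem. Note also that your justification applies equally to $e^{gen}(\cN_{i_j})$ itself. It would then make the inversion of $e^{gen}$ in the statement pointless, a sign that this is not the intended mechanism.

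Step 3 does not need the claim. Localise every factor at the whole multiplicative set generated by $Mp^\ast e$ and $e^{gen}$. Additivity then gives the product decomposition for free. The class $e_N(\cN_{i_j})$ stays invertible, since it already is after inverting $e^{gen}(\cN_{i_j})$ alone. Your left-inverse argument then goes through unchanged. Only the factorwise form, with just $e^{gen}(\cN_{i_j})$ inverted on the $j$-th factor, needs the extra input.

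Two smaller points:
\begin{itemize}
\item Step 2 requires the $M$ of the statement to be a multiple of the integer supplied by Proposition~\ref{PROP - atiyah-bott loc}.
\item Torus fixed loci of smooth schemes are smooth in every characteristic, since tori are linearly reductive. So the hypothesis ``$\mathrm{char}(\ek)=0$ or $\dim\vabs{X}^N=0$'' is not used where you place it. In your argument it is hidden inside the results imported from \cite{LevineAB24}.
\end{itemize}
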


\begin{rmq}
The last theorem is vastly truncated compared to the original statement since we look only at schemes with action of $N$ and all the intermediate hypotheses are checked. All details and description of the inverted elements can be found in \cite[Section 9]{LevineAB24}.
\end{rmq}

\begin{prop}\label{PROP - residu de Bott specialise points fixes}
If $\vabs{X}^N$ is composed of fixed points, the Bott residue formula in Theorem \ref{THO - residu Bott}, when applied to the Euler class $e_N^\cW(\cE)$ for a locally free sheaf $\cE$ over a scheme $X$ gives:
\[ e_N^\cW(\cE) = \sum_{p_j\ \text{fixed points}} e^\cW_N(\rest{\cE}{p_j})\cdot e^\cW_N(\rest{\cT_X}{p_j})^{-1}.\]
\end{prop}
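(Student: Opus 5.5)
The plan is to specialise Theorem \ref{THO - residu Bott} to the case where each component $\vabs{X}^N_j$ is a single fixed point $p_j=\Spec{\ek}$, embedded by $i_j : p_j \hookrightarrow X$. I take the class $x = e_N^\cW(\cE)$, viewed as a Borel--Moore class through the purity isomorphism on the smooth $X$. Since $X$ is smooth and proper with an $N$-action, the localisation isomorphism (Proposition \ref{PROP - atiyah-bott loc}) holds after inverting $Mp^\ast e$ and $e^{gen}$. The Bott residue formula then expresses $x$ as $i_\ast$ applied to the family $\bigl(i_j^!(x)\cap e_N(\cN_{i_j})^{-1}\bigr)_j$. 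Pushing forward along the fixed points, $i_\ast$ becomes the sum $\sum_j (i_j)_\ast$, because $\vabs{X}^N$ is the disjoint union of the $p_j$ and the first isomorphism of Theorem \ref{THO - residu Bott} splits the homology as a product.

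Next I identify each local factor.

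\begin{enumerate}
\item For a point $p_j$, the normal bundle of $i_j$ is $\rest{\cT_X}{p_j}$, with its induced $N$-linearisation. Up to the dual/conormal convention, this changes $e$ only by a sign. That sign can be absorbed, because $\cT_X$ has even rank, or it gets fixed by the convention of Lemma \ref{LEM - triangle commutatif produit classe Euler}. So $e_N(\cN_{i_j}) = e^\cW_N(\rest{\cT_X}{p_j})$, which is invertible in the localised ring by Lemma \ref{LEM - classe generique et inversibilite}.
\item By naturality of the equivariant Euler class (Definition \ref{DEF - classe Euler equivariante}) under $N$-equivariant pullback, $i_j^\ast e_N(\cE) = e_N^\cW(\rest{\cE}{p_j})$. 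This is a class in $H^\ast(BN,\cW)$.
\item I need $i_j^!$ of the Borel--Moore version of $x$ to agree with this pullback. For a section of a smooth scheme, $i_j^!$ applied to a cohomology class (turned into Borel--Moore via purity on $X$) equals $i_j^\ast$ composed with the purity identification at the point. The twist by $\det\cN_{i_j}$ cancels against the twist by $\omega_X$ restricted to $p_j$.
\end{enumerate}

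Putting these together, and suppressing the pushforwards $(i_j)_\ast$ to the base (equivalently, taking degrees to $BN$), gives the displayed formula.

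The main obstacle is step 3 together with the twists. One must check that the identifications $\cW(\det\cE)$ versus $\cW(\omega_X)$ at each fixed point are compatible with the chosen $N$-linearisations, so that the terms actually land in the same group $H^\ast(BN,\cW)$ and can be summed. I would handle this by assuming, as in the relative orientability setting of Remark \ref{RMQ - degre quad classe Euler}, an $N$-equivariant orientation $\det\cE\simeq\omega_X\otimes\cL^{\otimes 2}$. This orientation restricts at each $p_j$ to a trivialisation of the twist. I would then verify, using Lemma \ref{LEM - triangle commutatif produit classe Euler}, that $i_j^!i_{j\ast}$ is cap product with $e(\cN_{i_j})$, which justifies dividing by it.
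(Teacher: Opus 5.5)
Your proposal is correct, and its overall structure matches the paper's. You apply Theorem \ref{THO - residu Bott} with every component of $\vabs{X}^N$ a point, so that everything reduces to identifying $i_j^!\bigl(e_N^\cW(\cE)\bigr)$ with $e_N^\cW(\rest{\cE}{p_j})$, and $e_N(\cN_{i_j})$ with $e_N^\cW(\rest{\cT_X}{p_j})$; the paper takes the second identification for granted.

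The difference lies in how the first identification is proved.
\begin{itemize}
\item The paper works directly with the six-functor definition of the Euler class. It takes the Cartesian square formed by $i$, the zero section $s$ of $\eV(\cE)$ and the zero section $\sigma$ of $\eV(\rest{\cE}{p})$, and uses base change to compare $i^!s^!s_!$ with $\sigma^!\sigma_!$.
\item You factor the identification into two standard facts: naturality of $e_N$ under $i_j^\ast$, and compatibility of the Gysin map $i_j^!$ with purity. The latter amounts to $i_j^![X]=[p_j]$, i.e.\ associativity of fundamental classes for $p_j\to X\to\Spec{\ek}$.
\end{itemize}

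The paper's route needs only the base change axiom and stays close to the definition. Yours relies on heavier black boxes, but in exchange it keeps the twists visible, which matters here. Base change in the paper's square actually gives $\iota^!s_\ast\simeq\sigma_\ast i^!$, not $\sigma_! i^\ast$. Moreover, $i^!\eS_X$ differs from $i^\ast\eS_X$ by a Thom twist by the normal bundle of $i$. That is exactly the twist your step 3 cancels against $\rest{\omega_X}{p_j}$.

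You also make explicit two points the paper leaves implicit. First, the normal/tangent (dual) ambiguity is harmless in even rank. Second, an equivariant orientation is what makes each local quotient land in the untwisted group $H^0(BN,\cW)$. Both hold in the paper's application, where the two local Euler classes are multiples of $\tilde e\in H^2(BN,\cW(\gamma))$.
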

\begin{proof}
The equality is already proven by the Bott residue formula in Theorem \ref{THO - residu Bott} except for the identification $i_p^!(e(\cE)) \simeq e(\rest{\cE}{p})$ with $i_p : p \to X$ the inclusion of a fixed point.

Let us consider the Cartesian square
\[\xymatrix{\rest{E}{p} \ar[r]^{\iota} & E\\ 
\accol{p} \ar[u]^{\sigma} \ar[r]^{i} & X \ar[u]_s}\]
where $E = \eV(\cE)$ is the vector bundle associated to $\cE.$ From the definition of the Euler class (c.f. Definition \ref{DEF - classe Euler equivariante}), it suffices to show that $i^!s^!s_! \simeq \sigma^!\sigma_!i^\ast.$

It follows from
\begin{align*}
    i^!s^!s_!(\eS_X)& = (s i)^!s_!\eS_X \\
    & \simeq \sigma^!\iota^! s_\ast \eS_X \text{by commutativity and}\ s\ \text{proper}\\
    & \simeq \sigma^!\sigma_!i^\ast \eS_X\ \text{by base change}\\
    & \simeq \sigma^! \sigma_!(\eS_{\accol{p}}).
\end{align*}
\end{proof}

\begin{cor}\label{COR - simplification AB}
From Theorem \ref{THO - residu Bott}, assume further that $\vabs{X}^N = X^N$ and that $\cL$ is quadratically trivial.
Then the Witt degree of a $0-$class $c$ in $H^{BM}_{N,0}(X,\cW(\cL))[(Mp^\ast e)^{-1}(e^{gen})^{-1}]$ is the sum of the degrees of $i_j^!(c) \cap e_N(N_{i_j})^{-1}$ in $H^0(BN,\cW)[(Mp^\ast e)^{-1}(e^{gen})^{-1}]$.
\end{cor}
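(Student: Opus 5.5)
The corollary follows by composing the isomorphism of Theorem \ref{THO - residu Bott} with the equivariant push-forward to a point, after fixing the orientation identifications. Write $f : X \to \Spec{\ek}$ and $f_j = f\circ i_j : X^N_j \to \Spec{\ek}$ for the structural maps, which are $N$-equivariant. Since $\vabs{X}^N = X^N$, the components $X^N_j$ are genuine $N$-fixed, $N$-stable closed subschemes, and the $i_j$ are regular $N$-embeddings. Since $\cL$ is quadratically trivial, we fix an isomorphism $\cL \simeq \cM^{\otimes 2}$. This identifies $H^{BM}_{N,0}(X,\cW(\cL))$ with $H^{BM}_{N,0}(X,\cW)$ and likewise after pulling back along each $i_j$. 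The Witt degree then makes sense as the proper push-forward $f_\ast$ of Proposition \ref{PROP - covariance propre}, in its $N$-equivariant version, landing in $H^{BM}_{N,0}(\Spec{\ek},\cW)\simeq H^0(BN,\cW)$.

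\textbf{Step 1.} The localised degree is well defined. Push-forward is $H^\ast(BN,\cW)$-linear because of the projection formula in Theorem \ref{THO - six foncteurs equivariants}(4). Hence $f_\ast$ commutes with inverting the central elements $Mp^\ast e$ and $e^{gen}$, and gives a map into $H^0(BN,\cW)[(Mp^\ast e)^{-1}(e^{gen})^{-1}]$.

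\textbf{Step 2.} Apply Theorem \ref{THO - residu Bott}. In the localised groups we have
\[c = i_\ast\Big(\big(i_j^!(c)\cap e_N(\cN_{i_j})^{-1}\big)_j\Big) = \sum_j (i_j)_\ast\big(i_j^!(c)\cap e_N(\cN_{i_j})^{-1}\big).\]
Here the decomposition into a product over the $j$ is the first isomorphism of the theorem, and $i_\ast$ restricted to each factor is $(i_j)_\ast$.

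\textbf{Step 3.} Functoriality of proper push-forward gives $f_\ast (i_j)_\ast = (f i_j)_\ast = (f_j)_\ast$. Applying $f_\ast$ to the sum from Step 2 and using additivity therefore yields
\[\deg(c) = \sum_j \deg\big(i_j^!(c)\cap e_N(\cN_{i_j})^{-1}\big),\]
which is the claimed statement.

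The main obstacle is bookkeeping of twists. The classes $i_j^!(c)\cap e_N(\cN_{i_j})^{-1}$ a priori live with coefficients twisted by $i_j^\ast\cL\otimes\det\cN_{i_j}^{-1}$, combined with the twist $\det \cN_{i_j}$ of Lemma \ref{LEM - triangle commutatif produit classe Euler}. We must check that these twists cancel, so that each summand lies in an untwisted group on which $(f_j)_\ast$ is defined. We must also check that the chosen trivialisation $\cL\simeq\cM^{\otimes2}$ restricts compatibly along each $i_j$, so that the sum is independent of choices. I would handle this by noting that the twist on $X^N_j$ is $i_j^\ast\cL$ after the cancellation, which is quadratically trivial via $i_j^\ast\cM$. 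The square-class identifications also commute with $(i_j)_\ast$ and $i_j^!$, because these are natural in the twisting line bundle.
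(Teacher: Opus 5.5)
Your proposal is correct and is exactly the argument the paper leaves implicit: the corollary is stated without proof as an immediate consequence of Theorem \ref{THO - residu Bott}. That is, one writes $c=\sum_j (i_j)_\ast\bigl(i_j^!(c)\cap e_N(\cN_{i_j})^{-1}\bigr)$ in the localised group and pushes forward to the point using $H^\ast(BN,\cW)$-linearity, additivity and $f_\ast(i_j)_\ast=(f_j)_\ast$, with the twists cancelling to $i_j^\ast\cL$. The one point worth stating explicitly is that the square root $\cL\simeq\cM^{\otimes 2}$ must be an isomorphism of $N$-linearised sheaves: $\Picard{BN}$ contains the non-square class $\gamma$, and a merely non-equivariant square root could leave a residual twist that sends the push-forward to $H^0(BN,\cW(\gamma))$ instead of $H^0(BN,\cW)$.
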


\section{Lines in a degree 5 del Pezzo surface}\label{SEC - geometrie droites dP5}

\subsection{General facts about degree 5 del Pezzo surfaces}

We will first recall some details about characterisations of smooth del Pezzo surfaces of degree 5 over a perfect field $\ek$ of characteristic different from 2 . The main point is to embed $S$ into a Fano threefold $V_5.$

\begin{deff}[The Fano threefold $V_5$]\label{DEF - variete Fano quintique}
A Fano threefold $V_5$ is a smooth projective threefold with Picard group isomorphic to $\eZ$ spanned by a very ample invertible sheaf $\cL,$ index 2 and degree $5.$ It verifies $\cL^{\otimes 2} \simeq \omega_{V_5}^\vee$ and $\deg c_1(\cL)^3 = 5.$
\end{deff}

\begin{prop}[\cite{FN89}, Lemma 2.2]\label{PROP - plongement V5 dans P6}
Take $V_5$ a Fano threefold and $\cL$ as in Definition \ref{DEF - variete Fano quintique}. The global sections of $\cL$ determine a closed embedding $V_5 \hookrightarrow \eP^6.$
\end{prop}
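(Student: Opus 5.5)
The statement to prove is Proposition \ref{PROP - plongement V5 dans P6}: the global sections of $\cL$ give a closed embedding $V_5 \hookrightarrow \eP^6$. The plan is to prove very ampleness and compute $h^0(\cL)=7$ over $\overline{\ek}$, then descend to $\ek$.

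First compute $h^0(V_5,\cL)$ by Riemann--Roch on the threefold. Since $\cL^{\otimes 2}\simeq\omega_{V_5}^\vee$, we have $\cL\otimes\omega_{V_5}^\vee \simeq \cL^{\otimes 3}$ ample, so Kodaira vanishing gives $H^i(V_5,\cL)=0$ for $i>0$. In characteristic $p$ one uses instead the known vanishing for Fano threefolds, or passes to $\overline{\ek}$ and invokes the classification. Write $H=c_1(\cL)$, so $-K=2H$, $H^3=5$, and $\chi(\cO)=1$. Then $c_2\cdot H = 12$, using $c_2\cdot(-K)=24\chi(\cO)=24$. Hirzebruch--Riemann--Roch
\[\chi(\cL)=\tfrac{1}{6}H^3-\tfrac14 H^2K+\tfrac1{12}H(K^2+c_2)+\chi(\cO)\]
gives
\[\tfrac56+\tfrac{10}{4}+\tfrac1{12}(20+12)+1=\tfrac56+\tfrac52+\tfrac83+1=7.\]
So $\vert\cL\vert$ maps to $\eP^6$.

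Next, prove that $\cL$ is base point free and very ample. Base point freeness and the embedding property are geometric statements, so we may pass to $\overline{\ek}$. The sections of $\cL$ are defined over $\ek$ and $H^0$ commutes with flat base change, so the morphism over $\ek$ is a closed immersion exactly when its base change is. Over $\overline{\ek}$, follow Fujita's ladder. A general member $S\in\vert\cL\vert$ is a del Pezzo surface of degree $5$ by adjunction ($\omega_S=(\omega_{V_5}\otimes\cL)_{\mid S}=\cL^{\vee}_{\mid S}$). Restricting to $S$ gives
\[0\to\cO\to\cL\to\cL_{\mid S}\to0,\]
and $H^1(\cO)=0$ makes $H^0(\cL)\to H^0(\omega_S^\vee)$ surjective. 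The anticanonical system of a degree $5$ del Pezzo surface is very ample, embedding it in $\eP^5$. Hence the base locus of $\vert\cL\vert$ is empty on $S$; since members of $\vert \cL \vert$ sweep out $V_5$ (equivalently, $H^3>0$ makes the base locus at most finite and disjoint from $S$), it is empty. Very ampleness then follows from the standard lifting criterion: if $\cL_{\mid S}$ is very ample, $H^1(\cO_{V_5})=0$, and $\cL$ is globally generated, then $\vert\cL\vert$ separates points and tangents. Pairs of points and tangent vectors can be put on a common member $S$, because $\vert \cL \vert$ has dimension $6$.

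The main obstacle is producing a \emph{smooth} member $S$ (a Bertini-type statement, delicate in positive characteristic) and justifying the existence of enough such members through prescribed points. I would first try to avoid this by citing the classification over $\overline{\ek}$: $V_5$ is the linear section of $\Grass(5,2)\subset\eP^9$ by a codimension $3$ linear subspace, and $\cL$ is the Plücker restriction. The embedding into $\eP^6$ is then that linear span, and the $\ek$-statement follows by the descent argument above.
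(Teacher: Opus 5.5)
The paper gives no argument of its own here. It quotes \cite{FN89}, which works over $\eC$ and rests on the Iskovskikh--Fujita description of $V_5$ as a codimension-$3$ linear section of $\Grass(5,2)\subset\eP^9$; that is essentially your fallback. Your main line is genuinely different and more intrinsic.

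Definition~\ref{DEF - variete Fano quintique} already makes $\cL$ very ample, so the only real content is $h^0(V_5,\cL)=7$. Your Riemann--Roch computation gets this correctly: $c_2\cdot H=12$ follows from $\chi(\cO)=1$, and then $\tfrac56+\tfrac52+\tfrac83+1=7$. Kodaira vanishing kills the higher cohomology because $\cL\simeq\omega_{V_5}\otimes\cL^{\otimes 3}$. In characteristic $0$ this works directly over $\ek$, with no descent needed. In characteristic $p$ you only gesture at a replacement vanishing theorem. That is the same gap the paper's citation leaves, since \cite{FN89} is over $\eC$.

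The very-ampleness paragraph should be dropped rather than repaired. A very ample sheaf on a proper variety is embedded by its complete linear system, so once you have $h^0=7$ you are done.

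As written, that paragraph is circular. You take a general $S\in\vert\cL\vert$ to be a smooth del Pezzo surface before knowing $\vert\cL\vert$ is base point free. Bertini only gives smoothness away from the base locus, and producing a smooth member is exactly the nontrivial Shokurov/Fujita existence theorem.

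The lifting step also has a gap. It would need $\cL_{\mid D}$ to be very ample on the possibly singular members $D$ through prescribed pairs of points or tangent vectors, not just on a general member. As a side remark, the base locus is automatically contained in every member, so the sweeping-out argument is unnecessary in any case.
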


\begin{prop}\label{PROP - plongement dP dans V5 dans P9}
A smooth del Pezzo surface $S$ of degree 5 can always be described as a hyperplane section of $V_5\subset \Grass(5,2) \subset \eP^9$. These inclusions arise as the Plücker embedding of $\Grass(5,2)$ cut out by a $\eP^5$ inside a $\eP^6$ for $S$ and $V_5$ respectively.
\end{prop}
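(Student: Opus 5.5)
We will first show that $S$ embeds anticanonically in $\eP^5$, then realise the ambient $\eP^6$ and the Grassmannian by descent from $\overline{\ek}$. Since $S$ is a smooth del Pezzo surface of degree $5$, the anticanonical sheaf $\omega_S^\vee$ is very ample over $\ek$. By Riemann--Roch, $h^0(S,\omega_S^\vee) = 1 + K_S^2 = 6$. Hence $|-K_S|$ gives a closed embedding $S \hookrightarrow \eP^5$ defined over $\ek$, of degree $5$.

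Over $\overline{\ek}$ the classical picture holds. $S_{\overline{\ek}}$ is the blow-up of $\eP^2$ in four general points, and it is isomorphic to a codimension $4$ linear section of $\Grass(5,2)\subset \eP^9$. The key point is that this description should be intrinsic. The ideal of $S\subset\eP^5$ is generated by five quadrics, namely the $4\times 4$ Pfaffians of a $5\times 5$ skew-symmetric matrix of linear forms (Buchsbaum--Eisenbud). Equivalently, the rank $2$ vector bundle $\cQ$ on $S$ yielding the map to $\Grass(5,2)$ can be characterised as the unique stable (or exceptional) rank $2$ bundle with $\det\cQ\simeq\omega_S^\vee$ and $h^0(\cQ)=5$. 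On $S_{\overline{\ek}}$ this bundle is the pullback of the rank two bundle from a conic bundle structure; it is preserved by all automorphisms, hence Galois-invariant.

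The step I expect to be the main obstacle is descending $\cQ$ to $\ek$, since a Galois-invariant bundle need not be defined over $\ek$ (Brauer obstruction). I would first try to avoid the bundle altogether. By Buchsbaum--Eisenbud, the minimal free resolution of $\cO_S$ over $\eP^5$ is the Pfaffian complex
\[0\to \cO(-5)\to \cO(-3)^{5}\to \cO(-2)^{5}\to \cO\to \cO_S \to 0.\]
This resolution is defined over $\ek$ because minimal resolutions commute with field extension. The middle map is then a skew-symmetric $5\times5$ matrix of linear forms over $\ek$, after fixing the self-duality of the resolution, which is $\ek$-rational up to a scalar. This matrix is a linear map $\ek^{6\vee}\to\Lambda^2 W^\vee$ with $W=\ek^5$, and so it gives a linear embedding $\eP^5\hookrightarrow \eP(\Lambda^2 W)=\eP^9$ under which $S = \Grass(5,2)\cap\eP^5$. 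The Pfaffian equations are exactly the Plücker quadrics restricted to $\eP^5$, and injectivity on linear forms holds over $\overline{\ek}$, hence over $\ek$.

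For $V_5$, I would choose a $\ek$-rational $\eP^6\subset\eP^9$ containing the $\eP^5$ such that $V=\Grass(5,2)\cap\eP^6$ is smooth of dimension $3$. Smoothness is an open condition on the $\eP^3$ of such $\eP^6$'s, and it is non-empty over $\overline{\ek}$, since every smooth hyperplane section of $V_5$ is a quintic del Pezzo surface and all of these are isomorphic over $\overline{\ek}$. For $\ek$ infinite a rational point exists in this open set. For $\ek$ finite I would argue separately, by counting or by a Bertini theorem over finite fields, and I flag this case as a possible gap. Such a $V$ has index $2$, degree $5$ and Picard rank $1$ over $\overline{\ek}$, so it is a Fano threefold $V_5$ in the sense of Definition \ref{DEF - variete Fano quintique}. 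Finally, $S = V\cap \eP^5$ is a hyperplane section of $V$, which gives the claimed chain of inclusions.
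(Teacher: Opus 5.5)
Your proposal is correct and follows essentially the same route as the paper: the anticanonical embedding in $\eP^5$, the Buchsbaum--Eisenbud Pfaffian resolution, and the observation that the ten linear entries of the skew matrix span the linear forms, so that $\eP^5$ embeds linearly in the Plücker $\eP^9$ with the Plücker quadrics restricting to the Pfaffians, followed by an intermediate $\eP^6$ for $V_5$. Your treatment of that last step (smoothness is open on the $\eP^3$ of such $\eP^6$'s, non-empty over $\overline{\ek}$, hence has a $\ek$-point for infinite $\ek$) is more careful than the paper, which simply asserts that the chosen $\eP^6$ cuts out a $V_5$; you should make explicit that non-emptiness over $\overline{\ek}$ uses that the embedding $S_{\overline{\ek}}\subset\Grass(5,2)$ is unique up to $\PGL_5$ (this follows from uniqueness of the minimal resolution), and note that the finite-field case you flag is not handled in the paper either.
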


\begin{proof}
We actually do everything in reverse and start with the anti-canonical embedding of $S$ in $\eP^5$. We can use the same description as the one of \cite{BC21} for smooth del Pezzo surfaces over any field. The embedding as a codimension three variety of $\eP^5 = \eP(H^0(S,\omega_S^\vee))$ enables us to use results from \cite{BE77} and express $S$ as the zero locus of five $4\times4-$Pfaffians of an anti-symmetric matrix. 

This anti-symmetric matrix is given as the middle map $M$ in the resolution of the ideal $\cI_S$ of $S$ in $\eP^5$:
\[0\to \cG \to \cO_{\eP^5}^{\oplus 5}(-3) \overset{M}{\to} \cO_{\eP^5}^{\oplus 5}(-2) \to \cI_S \to 0.\]
In this sequence, $\cI_S$ is the Pfaffian ideal of $S$. 

The matrix $M$ is characterised by its upper triangular part, composed of 10 linear forms $(l_{ij})_{i< j}$. They form a generating family (otherwise, it would define a variety in a linear subspace of $\eP^5$) of the space of linear forms in 6 variables $H^0(S,\omega_S^\vee)$. We can extract a basis from the $l_{ij}$ and modify the remaining 4 elements to form a basis of $H^0(S,\omega_S^\vee)\oplus W$ with $W$ a $\ek-$vector space of dimension 4. Together, they give an anti-symmetric matrix $M_G$ that defines the embedding of $\Grass(5,2)$ in the Plücker embedding $\eP(H^0(S,\omega_S^\vee)\oplus W) = \eP^9$ via its $4\times 4-$Pfaffians. 

This last claim comes from the fact that the modification of $l_{ij}$ to have matrices of linear forms over $X = \eP(H^0(S,\omega_S^\vee)\oplus W)$ gives $M$ and $M_G$ as anti-symmetric matrices of
\[\cA\otimes \cO_X(-1) \to \cA^\vee \otimes \cO_X\]
with $\cA = \cO_{\eP^5}^{\oplus 5}.$ We can now identify these matrices to linear maps of
\[\bigwedge^2H^0(\eP^5,\cA) \to H^0(S,\omega_S^\vee)\oplus W\]
which are of same rank. The linear map associated to $M_G$ is invertible by construction. Moreover, up to conjugation in $\Gl_{10},$ there is only one such linear map. Taking the Grassmannian $\Grass(V,2)$ with $V$ a $5-$dimensional vector space, the anti-symmetric matrix of the Plücker coordinates in $\eP(\bigwedge^2V)$ gives the five equations of the Grassmannian as its $4\times 4-$Pfaffians. The matrix goes to the identity as a linear map and $M_G$ is defining the Grassmannian in some Plücker coordinates.

The del Pezzo surface $S$ is then exactly characterised by the intersection of this embedding of $\Grass(5,2)$ in $\eP^9$ and $\eP(H^0(S,\omega_S^\vee)).$

The same picture holds with the embedding of $V_5$ into $\eP^6$ as in Proposition \ref{PROP - plongement V5 dans P6}. To have the full sequence of inclusions $S \subset V_5 \subset \Grass(5,2),$ we do the same as before but we add an extra step. We complete the generating family of $H^0(S,\omega_S^\vee)$ into a generating family of $H^0(S,\omega_S^\vee) \oplus L$ with $L$ a $1-$dimensional vector space and then into a basis of $H^0(S,\omega_S^\vee) \oplus L \oplus W'$ with $W'$ of dimension 3 (we split $W$ into $L\oplus W'$). The restriction to the $\eP^6$ and then the $\eP^5$ gives a $V_5$ containing a del Pezzo surface.
\end{proof}

Using these embeddings, we will describe the embedding of the Hilbert scheme of lines of $S$ into the one of $V_5$ as the zero locus of a locally free sheaf.

\begin{deff}[Hilbert scheme of lines]\label{DEF - schema Hilbert droites}
The Hilbert scheme of lines of a projective variety $X,$ with respect to the very ample sheaf $\cL,$ is the scheme
\[\Hdrtes (X,\cL) = \Hilbsch[1+t,\cL]{X}\]
representing the functor $h_{1+t}^{\cL}(X)$ of coherent sheaves on $X$ such that their Hilbert polynomial (c.f. \cite[Section 5.1.4]{FGA_explained}), with respect to $\cL,$ is $1+t.$
\end{deff}

For a smooth del Pezzo surface of degree $5,$ we have $\omega_S^\vee$ as a canonical choice of polarisation to define lines.
\begin{prop}\label{PROP - inclusions des schemes de Hilbert}
With the embeddings of Proposition \ref{PROP - plongement dP dans V5 dans P9} and Proposition \ref{PROP - plongement V5 dans P6}, lines on a del Pezzo surface of degree 5 are lines of $\eP^6.$ More precisely, we have the inclusions
\[ \Hilbsch[1+2t,(\omega_S^\vee)^{\otimes 2}]{S} \subset \Hilbsch[1+2t,\omega_{V_5}^\vee]{V_5} \subset \Hilbsch[1+2t,\cO_{\eP^6}(2)]{\eP^6} = \Grass(7,2).\]
\end{prop}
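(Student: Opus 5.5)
The plan is to show that the polarisations are compatible along the chain of embeddings, and that a closed immersion $Y \hookrightarrow X$ induces a closed immersion of Hilbert schemes for compatible polarisations.

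\textbf{Polarisations.} By Proposition \ref{PROP - plongement V5 dans P6}, $V_5\subset\eP^6$ is embedded by $\cL$, so $\rest{\cO_{\eP^6}(1)}{V_5}\simeq \cL$ and $\rest{\cO_{\eP^6}(2)}{V_5}\simeq\cL^{\otimes 2}\simeq\omega_{V_5}^\vee$ (Definition \ref{DEF - variete Fano quintique}). For $S\subset V_5$ a hyperplane section (Proposition \ref{PROP - plongement dP dans V5 dans P9}), $S\in|\cL|$ is a Cartier divisor. Adjunction gives
\[\omega_S\simeq(\omega_{V_5}\otimes\cL)_{\mid S}\simeq \cL^{-1}_{\mid S}.\]
So $\omega_S^\vee\simeq\rest{\cO_{\eP^6}(1)}{S}$, consistent with $S\subset\eP^5\subset\eP^6$ being the anticanonical embedding, and $(\omega_S^\vee)^{\otimes2}\simeq\rest{\cO_{\eP^6}(2)}{S}$. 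These isomorphisms are over $\ek$, since the embeddings of Proposition \ref{PROP - plongement dP dans V5 dans P9} are defined over $\ek$.

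\textbf{Hilbert polynomials.} Let $\cF$ be a coherent sheaf on $S$ and $j:S\hookrightarrow\eP^6$ the inclusion. The projection formula gives
\[\chi\big(S,\cF\otimes(\omega_S^\vee)^{\otimes 2t}\big)=\chi\big(\eP^6,j_\ast\cF\otimes\cO(2t)\big).\]
Hence the Hilbert polynomial of $\cF$ with respect to $(\omega_S^\vee)^{\otimes2}$ equals that of $j_\ast\cF$ with respect to $\cO_{\eP^6}(2)$. A line, with polynomial $1+t$ for $\cO(1)$, has polynomial $1+2t$ for $\cO(2)$, which is why the statement uses $1+2t$.

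\textbf{Closed immersions of Hilbert schemes.} For a closed immersion $Y\hookrightarrow X$ of projective schemes with compatible polarisations, the subfunctor of $\Hilbsch{X}$ consisting of flat families of closed subschemes $Z\subset X\times T$ that factor through $Y\times T$ is represented by a closed subscheme. This is the standard fact that the ideal of $Y$ restricted to the family vanishing is a closed condition, cf. \cite[Section 5.1.4]{FGA_explained}. This subfunctor is naturally $\Hilbsch{Y}$. Applying this to $S\subset V_5\subset\eP^6$ gives both inclusions.

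\textbf{Identification with the Grassmannian.} It remains to identify $\Hilbsch[1+2t,\cO(2)]{\eP^6}=\Hilbsch[1+t,\cO(1)]{\eP^6}$ with $\Grass(7,2)$. A subscheme of $\eP^6$ with Hilbert polynomial $1+t$ is a line: it has degree $1$ and arithmetic genus $0$, so it has no embedded points and is a linear $\eP^1$. Lines in $\eP(V)$ with $\dim V=7$ correspond, with the paper's contravariant convention, to rank-2 quotients of $V$, and this identification holds in families. I expect this last verification to be the main obstacle, namely excluding non-reduced or embedded-point degenerations scheme-theoretically, rather than only on geometric points. I would argue via the Castelnuovo--Mumford regularity of the ideal of such a subscheme: the regularity is $1$, so the subscheme is cut out by linear forms.
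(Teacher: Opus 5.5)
Your proposal is correct and is the standard justification the paper leaves implicit: the paper states this proposition without proof, relying only on $\rest{\cO_{\eP^6}(1)}{S}\simeq\omega_S^\vee$ (anticanonical embedding, or your adjunction computation), $\rest{\cO_{\eP^6}(2)}{V_5}\simeq\omega_{V_5}^\vee$, closed immersions inducing closed immersions of Hilbert schemes, and the classical identification of lines in $\eP^6$ with $\Grass(7,2)$. The Castelnuovo--Mumford regularity step is more than you need: your length argument ($\chi=1$ forces $Z$ to equal its degree-one pure part) plus cohomology and base change for $\pi_\ast\cO_Z(1)$ already gives the identification with $\Grass(7,2)$ in families.
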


This description puts us in the framework of subvarieties of the Grassmannian, which is well-known in intersection theory. However, as $S$ is not of complete intersection in $\eP^6$, we will focus on the lines on $V_5$ as an intermediate step.

\subsection{Lines on the Fano $V_5$}\label{SSEC - droites de la Fano quintique}

We will use the description of the Hilbert scheme of lines of $V_5$ given by \cite{FN89}. We will recall it in the same fashion as \cite[Section 1]{DFK25_fano22} but for a non algebraically closed field.

Take $\fC$ a curve such that $\fC_{\bar \ek} \simeq \eP^1_{\bar \ek}.$ We then have $G := \Aut(\fC)$ which is the projective linear group $\operatorname{PGL}_2(\ek)$ if $\fC$ is isomorphic to $\eP^1$ over $\Spec{\ek}$.

\begin{rmq}
Over an algebraically closed field, the automorphism group of $V_5$ is $\Aut(V_5)\simeq \PGL_2(\overline{\ek})$ (c.f. \cite[Theorem 1.1.2]{KPS18}) and there are forms of $V_5$ in general. We will then say $V_5$ for any form of it and say \emph{trivial form}\ of $V_5$ for the one with $\Aut(V_5)\simeq \PGL_2(\ek).$ In particular, the construction of the Hilbert scheme of lines on $V_5$ is based on the curve $\fC$ we just considered and if the form of $V_5$ is not trivial, we have to take non-trivial $\ek-$forms of $\eP^1$ for $\fC.$
\end{rmq}
Replacing $\eP^1$ by a $\ek-$form $\fC$ of it in \cite{DFK25_fano22}, the argument goes through without modification. We summarise the construction as follows.

\begin{prop}\label{PROP - correspondance incidence Fano quintique}We use the same notations as above and $\fC$ is either $\eP^1$ or a smooth conic without $\ek-$rational point.
\begin{enumerate}
    \item The Hilbert scheme of lines of $V_5$ is isomorphic to $\fH:= \eP(H^0(\fC,\omega_\fC^\vee)) \simeq \eP^2.$
    \item There is a locally free sheaf $\cE$ of rank 2 on $\fH$ such that $\pi : \fU := \eP(\cE) \to \fH$ is the universal family of lines.
    \item There is a morphism $\psi : \fU \to V_5$ corresponding to the projection and $\psi^\ast\cO_{V_5}(1)$ is isomorphic to $\cO_{\eP(\cE)}(1)\otimes\pi^\ast\cO_\fH(1)$.
    \item The global sections of $\cO_{\eP(\cE)}(1)\otimes\pi^\ast\cO_\fH(1)$ are isomorphic to $H^0(\fC,(\omega_\fC^\vee)^{\otimes 3}) \oplus H^0(\fC,(\omega_\fC^\vee)^{\otimes 2}).$
    \item The sheaves $\omega_\fC^\vee$ and $\cE$ have a $G-$linearised structure. Thus $\fH$ and $\fU$ have a $G$ action and $\pi$ and $\psi$ are $G-$equivariant morphisms.
\end{enumerate}
\end{prop}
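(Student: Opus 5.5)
The plan is to follow the Furushima–Nakayama construction \cite{FN89}, as recalled in \cite[Section 1]{DFK25_fano22}, and check at each step that only $\ek$-rational data intrinsic to $\fC$ are used. Then each statement descends from $\bar\ek$ to $\ek$, whether $\fC$ is $\eP^1$ or a pointless conic.

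\textbf{Setting up $V_5$ and the parameter space.} Put $V := H^0(\fC,(\omega_\fC^\vee)^{\otimes 3})$; over $\bar\ek$ these are binary sextics, of dimension $7$. I will realise $V_5$ as the closure of the $G$-orbit of a $G$-invariant point in $\eP(V)$. Over $\bar\ek$ this is the classical orbit of $xy(x^4-y^4)$, and it is defined over $\ek$ because its stabiliser class is Galois stable. Since $\Aut(V_5)\simeq\PGL_2$ over $\bar\ek$ \cite[Theorem 1.1.2]{KPS18}, every $\ek$-form of $V_5$ arises from a $\ek$-form $\fC$ of $\eP^1$ in this way, which justifies the dichotomy in the statement.

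\textbf{Statements (1) and (2).} Set $\fH := \eP(H^0(\fC,\omega_\fC^\vee))$; over $\bar\ek$ these are binary quadrics, so $\fH\simeq\eP^2$. A line on $V_5$ corresponds to the sextics of the form $q^2\cdot\ell$ with $q$ a fixed quadric. Concretely, I will define the incidence by multiplication maps: the sheaf $\cE$ is the cokernel (or kernel) of a natural $G$-equivariant map between trivial sheaves with fibres $H^0(\fC,(\omega_\fC^\vee)^{\otimes j})$, twisted by $\cO_\fH(-1)$. This construction is manifestly defined over $\ek$ and is $G$-linearised, which also gives (5).

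I then check over $\bar\ek$, using the bijection in \cite{FN89}, that $\eP(\cE)\to V_5$ identifies $\fH$ with the Hilbert scheme of lines. Representability of the Hilbert scheme commutes with base change. The comparison morphism $\fH\to \Hdrtes(V_5)$ comes from the universal family $\fU$, so it is defined over $\ek$, and it is an isomorphism after base change to $\bar\ek$; by fpqc descent it is an isomorphism over $\ek$.

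\textbf{Statements (3) and (4).} By construction $\psi^\ast\cO_{V_5}(1)$ is the pullback of $\cO_{\eP(V)}(1)$. Computing on fibres, it equals $\cO_{\eP(\cE)}(1)\otimes\pi^\ast\cO_\fH(1)$, and I will verify this by comparing degrees on fibres and on a section. Then
\[H^0\bigl(\fU,\cO_{\eP(\cE)}(1)\otimes\pi^\ast\cO_\fH(1)\bigr)=H^0\bigl(\fH,\cE\otimes\cO_\fH(1)\bigr),\]
and the defining sequence of $\cE$ splits $G$-equivariantly into the $7$- and $5$-dimensional irreducible pieces $H^0(\fC,(\omega_\fC^\vee)^{\otimes 3})\oplus H^0(\fC,(\omega_\fC^\vee)^{\otimes 2})$. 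Since $\mathrm{char}(\ek)\neq2$ and the characteristic is large enough, or by a direct dimension count, the equivariant splitting holds over $\ek$ as well.

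\textbf{Main obstacle.} The hardest step is descent for a nontrivial form $\fC$. One must make sure that $\cE$, and not merely $\eP(\cE)$, is defined over $\ek$; a priori only a Brauer-twisted sheaf might descend. I would handle this by building $\cE$ directly from the representations $H^0(\fC,(\omega_\fC^\vee)^{\otimes j})$. These are honest $\ek$-vector spaces with $\Aut(\fC)$-action, because $\omega_\fC^\vee$ is always defined over $\ek$. Building $\cE$ this way avoids the odd-degree sheaf $\cO_\fC(1)$, which is the only place where the obstruction lives.
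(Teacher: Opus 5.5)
Your outline follows the paper's route. The paper itself only asserts that the construction of \cite{FN89}, as presented in \cite{DFK25_fano22}, goes through verbatim with $\fC$ in place of $\eP^1$, because only the sheaves $(\omega_\fC^\vee)^{\otimes j}$ enter. There is, however, a genuine gap: the ingredient that carries (2), (3) and your descent argument for (1) is the incidence, i.e.\ the $\ek$-morphism $\psi$. You describe it wrongly and never actually construct it.

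Lines on $V_5$ are not the sextics $q^2\cdot\ell$ with $q$ fixed. With $\ell$ linear this is a quintic, and with $\ell$ a quadric it is a plane. The line attached to $q=l_al_b$ is the span of $[l_a^5l_b]$ and $[l_al_b^5]$, i.e.\ $\{l_al_b(\lambda l_a^4+\mu l_b^4)\}$. For $a=b$ it is the tangent line $\{l_a^5m\}$ to the rational normal sextic. This is exactly what the paper's bidegree $(5,1)$ basis $v_0,\dots,v_6$ records. The $v_i$ are the normalised coefficients of $l_e^5l_f$, i.e.\ the restriction of $\Psi$ to the bisection $\fC\times\fC\hookrightarrow\eP(\cE)$ given by $v^\ast v_\ast p_1^\ast(\omega_\fC^\vee)^{\otimes 2}\twoheadrightarrow p_1^\ast(\omega_\fC^\vee)^{\otimes 2}$.

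So ``by construction'' in your (3) refers to a map you have not defined. You must exhibit the $G$-equivariant $\ek$-linear map $H^0(\fC,(\omega_\fC^\vee)^{\otimes 3})\to H^0(\fH,\cE\otimes\cO_\fH(1))$ defining $\Psi$. Fibrewise, a point of $\eP(\cE_q)$ is a line $\ek r$ in the two-dimensional annihilator of $q\cdot H^0(\fC,\omega_\fC^\vee)$ under the invariant pairing on $H^0(\fC,(\omega_\fC^\vee)^{\otimes 2})$, and it goes to $[qr]$. You then need to check three things: the map has no base points, its image is $V_5$, and it sends each fibre of $\pi$ isomorphically onto a line. Only then is your comparison morphism from $\fH$ to the Hilbert scheme defined over $\ek$, so that fpqc descent applies.

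The rest is sound or easily repaired. Commit to the cokernel model
\[\cE=\operatorname{coker}\bigl(H^0(\fC,\omega_\fC^\vee)\otimes\cO_\fH(-1)\to H^0(\fC,(\omega_\fC^\vee)^{\otimes 2})\otimes\cO_\fH\bigr),\]
not ``kernel or cokernel''. It coincides with the paper's $v_\ast p_1^\ast(\omega_\fC^\vee)^{\otimes 2}$, because $(p_1,v)$ identifies $\fC\times\fC$ with the universal divisor $\mathcal{D}\subset\fC\times\fH$. It gives $\det\cE\simeq\cO_\fH(3)$ and $c_2(\cE\otimes\cO_\fH(1))=10$, as used later. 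It also makes the $G$-linearisation in (5) and the splitting of $\fH$ manifest, so your descent argument is a clean substitute for the paper's remark producing a degree-one divisor on $\fH$.

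Three smaller fixes are needed. First, $V_5$ is the orbit closure of a point with \emph{finite} (octahedral) stabiliser, e.g.\ $[xy(x^4-y^4)]$ over $\bar\ek$, not of a $G$-invariant point. Second, you cannot test (3) on a section of $\pi$, since none exists: a quotient $\cE\twoheadrightarrow\cO_\fH(m)$ would force $m(3-m)=c_2(\cE)=6$. Use a fibre and the bisection $\fC\times\fC$ instead. Third, in (4) a dimension count ($15-3=12=7+5$) gives only a non-equivariant isomorphism. The $G$-equivariant splitting needs the Clebsch--Gordan decomposition of $H^0(\fC,(\omega_\fC^\vee)^{\otimes 2})\otimes H^0(\fC,\omega_\fC^\vee)$, hence characteristic $0$ or large enough; the paper tacitly inverts $5$ as well.
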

\begin{rmq}\label{RMQ - morphisme de U vers P6}
Actually, the morphism $\psi$ comes from the rational $G-$equivariant map $\Psi : \fU \dashrightarrow \eP^6,$ with $\eP^6 = \eP(H^0(\fC,(\omega_\fC^\vee)^{\otimes 3})).$ The image of $\Psi$ is isomorphic to $V_5,$ thus the above claim.
\end{rmq}

The picture to keep in mind from this is the diagram
\begin{align}\label{EQU - diagramme incidence Fano}
    \xymatrix{&&\fU\ar[ld]_\psi \ar[rd]^\pi & \\ S \hspace{0.1cm}\ar@{^{(}->}[r] & V_5&&\fH.}
\end{align}

\begin{rmq}\label{RMQ - description de E}
The sheaf $\cE$ comes as $v_\ast( p_1^\ast (\omega_\fC^\vee)^{\otimes 2})$ with $p_1 : \fC \times \fC \to \fC$ the first projection and $v : \fC \times \fC \to \fH$ the double cover ramified along the diagonal. When $\fC \simeq \eP^1$ over $\Spec{\ek}$, we can choose coordinates and the double cover is defined by 
\[[e_0:e_1],[f_0:f_1] \mapsto [e_0f_0: e_0f_1 + e_1f_0 : e_1f_1].\]
\end{rmq}

\begin{rmq}
To justify that $\fH$ is isomorphic to $\eP^2$ and not a $\ek-$form of it, let us recall the two types of lines on $V_5$ as in \cite[Paragraph 1.1.1]{DFK25_fano22}. Lines on a $V_5$ are integral curves of anti-canonical degree 2 (c.f. Proposition \ref{PROP - inclusions des schemes de Hilbert}). Depending on the conormal sheaf of such a curve $\cml$ on $V_5$, there are two types of lines:
\begin{itemize}
    \item $\cml$ is of general type if the conormal sheaf is trivial,
    \item $\cml$ is of special type if the conormal sheaf is isomorphic to $\cO_\cml(-1)\oplus \cO_\cml(1).$
\end{itemize}
By \cite[Proposition 2]{DFK25_fano22}, lines of special type define a conic (the image of $\Delta(\fC)$ by the double cover $v$) on $\fH,$ which is a divisor of degree 2. We also have the anti-canonical divisor of $\fH$ which is of degree 3. The difference of these two divisor gives a divisor of degree 1 on $\fH,$ thus a $\ek-$rational point and $\fH \simeq \eP^2.$
\end{rmq}

\subsection{Lines on a del Pezzo surface}
Since we embed a degree 5 del Pezzo surface in (a $\ek-$form of) $V_5$, a line of $S$ is a line of $\eP^6$ contained in $S$, hence a line of $V_5$ contained in $S$. As $S$ is given by the zero locus of a global section of $\rest{\cO_{\eP^6}(1)}{V_5},$ its lines are given by the zero locus of the induced section of $\pi_\ast\psi^\ast\cO_{V_5}(1) \simeq \cE\otimes \cO_\fH(1)$ on $\fH$. This section is then an element of $H^0(\fH,\cE\otimes \cO_\fH(1)),$ which is isomorphic to $H^0(\fC,(\omega_\fC^\vee)^{\otimes 3}) \oplus H^0(\fC,(\omega_\fC^\vee)^{\otimes 2})$ from Proposition \ref{PROP - correspondance incidence Fano quintique}.

\begin{prop}
The Hilbert scheme of lines on a smooth del Pezzo surface of degree $5$ is a 0-dimensional subscheme of $\fH.$ It can be described as the zero locus of a section of $\cE\otimes\cO_\fH(1)$ in the projective space $\fH \simeq \eP^2.$ This sheaf is relatively orientable on $\fH.$
\end{prop}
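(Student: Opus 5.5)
The proof splits into three parts: identifying the Hilbert scheme as a zero locus, showing it is $0$-dimensional, and checking relative orientability.

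\emph{Zero locus.} Let $S = V_5 \cap H_0$ for a hyperplane $H_0$, and let $s_0 \in H^0(V_5,\cO_{V_5}(1))$ be the section cutting out $S$. Pull it back to $\psi^\ast s_0$ on $\fU = \eP(\cE)$. By part (3) of Proposition \ref{PROP - correspondance incidence Fano quintique}, $\psi^\ast\cO_{V_5}(1) \simeq \cO_{\eP(\cE)}(1)\otimes \pi^\ast\cO_\fH(1)$. The projection formula together with $\pi_\ast\cO_{\eP(\cE)}(1)\simeq \cE$ then gives $\pi_\ast\psi^\ast\cO_{V_5}(1)\simeq \cE\otimes\cO_\fH(1)$, so $\psi^\ast s_0$ induces a section $\tilde s$ of $\cE\otimes\cO_\fH(1)$.

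For any $\ek$-scheme $T$, a $T$-point of $\fH$ is a family of lines $\fU_T \to T$. Since $\pi$ is a $\eP^1$-bundle, base change for $\pi_\ast$ holds, and $\tilde s$ vanishes at this $T$-point exactly when $\psi^\ast s_0$ vanishes on $\fU_T$, that is, when the family lies in $S$. This identifies the functor of lines on $S$ (Definition \ref{DEF - schema Hilbert droites}, using the inclusions of Proposition \ref{PROP - inclusions des schemes de Hilbert}) with the zero scheme $Z(\tilde s)\subset\fH$.

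\emph{Dimension $0$.} Since $\tilde s$ is a section of a rank $2$ sheaf on a surface, it suffices to show $Z(\tilde s)$ is finite. This can be checked after base change to $\bar\ek$. There $S_{\bar\ek}$ is the blow-up of $\eP^2$ in four points and has exactly $10$ lines. Each is a $(-1)$-curve, so its normal bundle in $S$ is $\cO(-1)$ and $H^0(N_{\cml/S})=0$. The tangent space of the Hilbert scheme at each point therefore vanishes, so $Z(\tilde s)$ is reduced and $0$-dimensional of length $10 = c_2(\cE\otimes\cO(1))$. The last equality is a consistency check I would carry out via Remark \ref{RMQ - description de E}.

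\emph{Relative orientability.} This is the main step, since it requires the Chern classes of $\cE$. Because $\Picard{\eP^2}=\eZ$ and $\omega_{\eP^2}\simeq\cO(-3)$, we need $\det(\cE\otimes\cO(1)) = \det\cE\otimes\cO(2)$ to have odd degree, which amounts to $c_1(\cE)$ odd.

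I would compute $c_1(\cE)$ from Remark \ref{RMQ - description de E}. Write $\cE = v_\ast p_1^\ast\cO_{\eP^1}(2)$ for the degree $2$ cover $v:\eP^1\times\eP^1\to\eP^2$, which is branched along a conic, so $v_\ast\cO \simeq \cO\oplus\cO(-1)$. By Grothendieck–Riemann–Roch for finite maps, $c_1(v_\ast L) = v_\ast c_1(L) + c_1(v_\ast\cO)$. Here $v_\ast$ of the class of a fibre of $p_1$ is a line (a tangent line to the conic), so $v_\ast c_1(p_1^\ast\cO(2)) = 2$, giving $c_1(\cE) = 2 - 1 = 1$.

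Alternatively, part (4) of Proposition \ref{PROP - correspondance incidence Fano quintique} gives $h^0(\cE\otimes\cO(1)) = 4+3 = 7$, and Riemann–Roch on $\eP^2$ then pins down $c_1(\cE)=1$ once $c_2$ is fixed. Either way, $\det(\cE\otimes\cO(1)) \simeq \cO(3) \simeq \omega_\fH\otimes\cO(3)^{\otimes 2}$, which is relatively orientable as required.

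The definition of orientability is over $\ek$, and over a non-trivial form of $\fC$ the computation above only takes place over $\bar\ek$. To descend, I would note that $\Picard{\fH}\to\Picard{\fH_{\bar\ek}}$ is an isomorphism because $\fH\simeq\eP^2$ over $\ek$. Hence the isomorphism $\det(\cE\otimes\cO(1))\simeq\cO(3)$ holds already over $\ek$.
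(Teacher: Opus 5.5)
Your overall route matches the paper's: the zero-locus description via $\pi_\ast\psi^\ast\cO_{V_5}(1)\simeq\cE\otimes\cO_\fH(1)$, then relative orientability from $\det\cE$, read off the double-cover description of Remark~\ref{RMQ - description de E}. Where the paper cites Schwarzenberger, you use Grothendieck--Riemann--Roch for the finite map $v$. Your extra details on finiteness and on descending the Picard computation to $\ek$ are fine, and go beyond what the paper writes.

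The key numerical input is wrong, though. Since $\omega_{\eP^1}\simeq\cO_{\eP^1}(-2)$, we have $(\omega_\fC^\vee)^{\otimes 2}\simeq\cO_{\eP^1}(4)$. So $\cE\simeq v_\ast(\cO_{\eP^1}(4)\boxtimes\cO_{\eP^1})$, not $v_\ast p_1^\ast\cO_{\eP^1}(2)$. Your own formula $c_1(v_\ast L)=v_\ast c_1(L)+c_1(v_\ast\cO)$ then gives $c_1(\cE)=4-1=3$, i.e. $\det\cE\simeq\cO_\fH(3)$ as in the paper. Consequently $\det(\cE\otimes\cO_\fH(1))\simeq\cO_\fH(5)\simeq\omega_\fH\otimes\cO_\fH(4)^{\otimes 2}$, and your asserted isomorphism $\det(\cE\otimes\cO_\fH(1))\simeq\cO_\fH(3)$ is false.

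Relative orientability still holds, but only because it depends solely on the parity of $c_1(\cE)$, and $1$ and $3$ are both odd. As written, your argument passes through a false intermediate statement, and both of your proposed cross-checks would have exposed it.

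\emph{The $c_2$ check.} With your sheaf $v_\ast p_1^\ast\cO_{\eP^1}(2)$ one gets $c_1=1$ and $c_2=1$ (a binary quadric has one pair of roots). Hence $c_2(\cE\otimes\cO_\fH(1))=c_2+c_1+1=3$, not $10$. With the correct sheaf, $c_2(\cE)=6$ (pairs of roots of a binary quartic), and $c_2(\cE\otimes\cO_\fH(1))=6+3+1=10$.

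\emph{The Riemann--Roch check.} Part (4) of Proposition~\ref{PROP - correspondance incidence Fano quintique} gives $h^0(\cE\otimes\cO_\fH(1))=h^0(\cO_{\eP^1}(6))+h^0(\cO_{\eP^1}(4))=7+5=12$, not $4+3$. Equivalently this is $h^0(\eP^1\times\eP^1,\cO(5,1))$, since $v^\ast\cO_\fH(1)\simeq\cO(1,1)$. With $\chi=7$ and $c_2=10$, the relation $\chi=2+\tfrac{1}{2}c_1(c_1+3)-c_2$ has no integer solution. With $\chi=12$ (higher cohomology vanishes because $v$ is finite) it gives $c_1(\cE\otimes\cO_\fH(1))\in\{5,-8\}$. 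The root $-8$ is excluded because $\cE\otimes\cO_\fH(1)$ is globally generated.

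Correcting the bidegree of the line bundle on $\fC\times\fC$ repairs the proof.
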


\begin{proof}
The only thing left to prove is the relative orientability. Since $\fH$ is a projective space of dimension $2$, its Picard group is isomorphic to $\eZ$ spanned by $\cO_\fH(1)$ and $\omega_\fH \simquad \cO_\fH(1).$

On the other hand, $\det(\cE\otimes \cO_\fH(1)) \simeq \det\cE \otimes \cO_\fH(2).$ From the definition of $\cE$ as $v_\ast (p_1^\ast(\omega_\fC^\vee)^{\otimes 2})$ (c.f. Remark \ref{RMQ - description de E}), we can see it is, by a change of notations, the same as $f_\ast L^{4,0} $ with $L^{4,0} = \cO_{\eP^1}(4)\boxtimes \cO_{\eP^1}$ and $f$ the double cover of $\eP^2$ by $\eP^1\times \eP^1$ over the diagonal in \cite[Theorem 5]{Schwarzenberger}. Thus $\det \cE \simeq \cO_\fH(3)$ which is quadratically equivalent to $\omega_\fH.$
\end{proof}

\begin{rmq}\label{RMQ - classe chern fibre des droites}
The computations of the Chern classes of $\cE$ that were used in the proof (given in \cite[Theorem 5]{Schwarzenberger}) also give us $\deg c_2(\cE\otimes \cO_\fH(1)) = 10$. It gives back the 10 lines on a smooth del Pezzo surface of degree 5 over an algebraically closed field.
\end{rmq}

Up to this point, the Hilbert scheme of lines on $S$ is a $0-$dimensional smooth subvariety of length 10 in $\fH = \eP^2$ inside the Grassmannian $\Grass(7,2).$ However, the construction of the sheaf $\cE\otimes \cO_\fH(1)$ depends in general on the form of $\fC$ and the group acting on it can vary. 

\section{An oriented Schubert calculus computation}\label{SEC - calcul Schubert oriente}

\subsection{The geometrical setup and Schubert calculus}
In this section, one can assume that $\ek = \eC$ as we do some Schubert calculus in Chow groups.

The Grassmannian $\Grass(7,2)$ is a $10-$dimensional projective variety and its Chow groups can be described using Schubert calculus and Young diagrams (c.f. \cite[Chapter 14]{fulton}). In particular, $\Chowct[8]{\Grass(7,2)}$ is spanned, as a $\eZ-$module, by 
\begin{center}
    $\sigma_{4,4}=$ \ydiagram{4,4}\ and $\sigma_{5,3}\ $= \ydiagram{5,3}\ .
\end{center}

Our intersection problem comes from what is the class of $\fH = \eP^2$ in the Chow group. What we know is that, when restricted to $\Grass(6,2),$ this becomes the lines in the del Pezzo surface of degree 5. This restriction is exactly the multiplication by $c_2(\cQ)$ with $\cQ$ the universal quotient of $\Grass(7,2).$ This $c_2(\cQ)$ is exactly 
\begin{center}
    $\sigma_{1,1} =$ \ydiagram{1,1}\ .
\end{center}

\begin{rmq}
There is a convention difference here compared to \cite{fulton} and \cite{Wendt_oriented-schubert}. The tautological sub-bundle there becomes the tautological quotient sheaf here but it corresponds exactly to the same considerations.
\end{rmq}

\begin{prop}
The class of $\fH$ in $\Chowct[8]{\Grass(7,2)}$ is $10\cdot \sigma_{4,4}.$
\end{prop}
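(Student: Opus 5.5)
The plan is to determine the two coefficients in $[\fH]=a\,\sigma_{4,4}+b\,\sigma_{5,3}$ by duality. In the $2\times 5$ box the dual of $\sigma_{4,4}$ is $\sigma_{1,1}$ and the dual of $\sigma_{5,3}$ is $\sigma_{2}$. Moreover $\sigma_{4,4}\cdot\sigma_{1,1}=1$, $\sigma_{4,4}\cdot\sigma_2=0$, $\sigma_{5,3}\cdot\sigma_{1,1}=0$ and $\sigma_{5,3}\cdot\sigma_2=1$ (Pieri, \cite[Chapter 14]{fulton}). Hence
\[a=\deg\big([\fH]\cdot\sigma_{1,1}\big),\qquad b=\deg\big([\fH]\cdot\sigma_2\big),\]
and the proposition amounts to showing $a=10$ and $b=0$.

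\textbf{The coefficient $a$.} As recalled just before the statement, $\sigma_{1,1}=c_2(\cQ)$. It is represented by the zero locus of the section of $\cQ$ induced by a linear form on $\eP^6$, namely the sub-Grassmannian $\Grass(6,2)$ of lines contained in the hyperplane $\eP^5$. Intersecting $\fH$ with it gives exactly the lines of $V_5$ lying in $S=V_5\cap\eP^5$. By the preceding proposition this is the zero scheme of a section of $\cE\otimes\cO_\fH(1)$. Since $\rest{\cQ}{\fH}$ is the sheaf whose fibre at a line $\cml$ is $H^0(\cml,\cO(1))$, we have $\rest{\cQ}{\fH}\simeq\pi_\ast\psi^\ast\cO_{V_5}(1)\simeq\cE\otimes\cO_\fH(1)$. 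Therefore
\[a=\deg c_2(\cE\otimes\cO_\fH(1))=10\]
by Remark \ref{RMQ - classe chern fibre des droites}.

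\textbf{The coefficient $b$.} I would compute $b$ in the same way, as $\deg c_2$ of another natural bundle on $\fH$. Writing $\sigma_2=\sigma_1^2-\sigma_{1,1}$ with $\sigma_1=c_1(\cQ)$, we get
\[b=\deg\big(c_1(\rest{\cQ}{\fH})^2\big)-10 .\]
Using $\det(\cE\otimes\cO_\fH(1))\simeq\det\cE\otimes\cO_\fH(2)$ together with $\det\cE\simeq\cO_\fH(3)$ from \cite[Theorem 5]{Schwarzenberger}, the Plücker class restricted to $\fH$ is $\cO_\fH(5)$, so the first term is $25$.

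\textbf{Main obstacle.} This gives $b=25-10=15$, not $0$. The same number comes out geometrically, since $\sigma_2$ counts lines meeting a general $\eP^3$: the intersection $V_5\cap\eP^3$ consists of $5$ points, with $3$ lines through each. So under the Schubert conventions fixed above (with $\cQ$ the tautological quotient and $\sigma_{1,1}=c_2(\cQ)$), I cannot establish $b=0$. What this argument proves is only that the $\sigma_{4,4}$-coefficient of $[\fH]$ is $10$, and I would expect the full class to be $10\,\sigma_{4,4}+15\,\sigma_{5,3}$.

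Before relying on the statement as worded, I would check whether the paper intends a different normalisation that kills the $\sigma_{5,3}$ term. One possibility is the dual indexing noted in the remark comparing with \cite{fulton} and \cite{Wendt_oriented-schubert}, which may swap the roles of $\sigma_2$ and $\sigma_{1,1}$. Another is that the class is meant only as the part detected by pairing with $c_2(\cQ)$. Only under such a reinterpretation do I see how to obtain $10\cdot\sigma_{4,4}$. Under the conventions fixed above, I expect this discrepancy to be the real obstruction to proving the proposition.
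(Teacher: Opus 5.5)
Your computation is correct, and it shows that the statement as worded is false. The problem lies in the paper, not in your argument. The paper's proof is exactly your computation of the $\sigma_{4,4}$-coefficient, done more loosely. It writes $[\fH]$ in the basis $\sigma_{4,4},\sigma_{5,3}$ and pairs with $\sigma_{1,1}=c_2(\cQ)$, using $\sigma_{1,1}\cdot\sigma_{4,4}=\sigma_{5,5}$ and $\sigma_{1,1}\cdot\sigma_{5,3}=0$. It then reads off the coefficient $10$ from the ten lines on $S$. Because $\sigma_{1,1}$ kills $\sigma_{5,3}$, this pairing cannot detect the second coefficient. The paper never computes it, so the implicit claim that it vanishes is unsupported. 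No change of normalisation rescues it. The paper's conventions ($c_2(\cQ)=\sigma_{1,1}$ with ordinary Pieri) are the ones you used, and the duality $\sigma_{4,4}\leftrightarrow\sigma_{1,1}$, $\sigma_{5,3}\leftrightarrow\sigma_2$ is intrinsic to the $2\times 5$ box.

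Your value $b=15$ is right, and your two derivations agree: $25-10$ from $c_1(\cE\otimes\cO_\fH(1))=5\,c_1(\cO_\fH(1))$, and $5\cdot 3$ from $V_5\cap\eP^3$. You can show the failure without the Chern class input. Since $\fH\simeq\eP^2$, we have $\rest{\sigma_1}{\fH}=d\cdot c_1(\cO_\fH(1))$ for some integer $d$. Together with $\sigma_1^2=\sigma_2+\sigma_{1,1}$ and $\deg(\sigma_{1,1}\cdot[\fH])=10$, this gives $b=d^2-10$, which is never $0$. There is also a geometric reason. A surface whose class lies in $\eZ\,\sigma_{4,4}$ has $\deg(\sigma_2\cdot[\fH])=0$, so its lines miss a general $\eP^3$ and sweep out at most a surface. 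The lines of $V_5$, by contrast, cover the threefold $V_5$.

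The correct statement is $[\fH]=10\,\sigma_{4,4}+15\,\sigma_{5,3}$. What the paper actually proves, and all the Chow part of its count uses, is your second reading: $\deg(\sigma_{1,1}\cdot[\fH])=10$. This is insensitive to $b$. However, the paper's next step lifts $[\fH]$ to Chow--Witt groups by lifting the doubled class $\sigma_{4,4}$, and that step needs a different justification, because $\sigma_{5,3}$ is not in the image of the doubling map.
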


\begin{proof}
Indeed, the intersection product $\sigma_{1,1}\cdot \sigma_{4,4} = \sigma_{5,5}$ is the class of a point. But, the other product $\sigma_{1,1}\cdot \sigma_{5,3}$ is zero. Since it has to end up being 10 points, the coefficient has to be 10 at the beginning.
\end{proof}

We now need to check if and how the class $\sigma_{4,4}$ lifts to Chow-Witt groups. If it lifts, we have to compute its coefficient in $\GW(\ek)$.

\subsection{Lifting the Schubert cycle}

The extensive study of the Chow-Witt groups of Grassmannians can be found in \cite{Wendt_CHW-grass} and the interpretation of the classes in terms of oriented Schubert calculus was done in \cite{Wendt_oriented-schubert}. That is what we will use in this somehow very simple case. Let us briefly introduce the objects we need from there:

The doubling map of Young diagrams
\[\gamma : \left|\begin{array}{ccc}
    \Chowct[\bullet]{\Grass(3,1)} & \to & \Chowct[4\bullet]{\Grass(7,2)} \\
    \square & \mapsto & \boxplus
\end{array}\right.\]
gives rise to a map (c.f. \cite[Remark 3.2]{Wendt_oriented-schubert})
\[\tilde{\omega} : \Chowct[\bullet]{\Grass(3,1)} \to H^{4\bullet}(\Grass(7,2),\fId^{4\bullet}).\]
More precisely, the elements in the image of $\gamma$ (in particular our $\sigma_{4,4}$) lift to elements in the Chow-Witt group and coincide with a basis of the torsion-free part of $H^{4\bullet}(\Grass(7,2),\fId^{4\bullet}).$

The Schubert calculus can then be adapted in the groups $H^\ast(\Grass(n,k),\fId^\ast(\cL))$ and the torsion-free elements that come from the above maps are denoted
\[\fS_{\underline{a}} := \tilde{\omega}(\sigma_{\underline{a}}).\]
\begin{prop}From the above, we get the lifting of the classes we need:
\begin{itemize}
    \item The class $\sigma_{4,4}$ in $\Chowct[8]{\Grass(7,2)}$ lifts to $\Chowtilde[8]{\Grass(7,2),\cO}$.
    \item The class $c_2(\cQ)$ in $\Chowct[2]{\Grass(7,2)}$ lifts to $\Chowtilde[2]{\Grass(7,2),\det \cQ}$ as the Euler class $e^{CW}(\cQ).$
\end{itemize}
\end{prop}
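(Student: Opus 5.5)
The plan is to treat the two items separately, since they rest on different mechanisms.

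For the first item, I will use Wendt's oriented Schubert calculus. The diagram $(4,4)$ is the doubling $\gamma(\sigma_2)$ of the class $\sigma_2$ (the point class) in $\Chowct[2]{\Grass(3,1)}$, where $\Grass(3,1)\simeq\eP^2$. By \cite[Remark 3.2]{Wendt_oriented-schubert}, $\tilde\omega(\sigma_2)=\fS_{4,4}$ is a torsion-free class in $H^8(\Grass(7,2),\fId^8)$ with trivial twist; the doubled diagrams are exactly the ones carrying untwisted even classes.

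The key step is compatibility: the reduction of $\fS_{4,4}$ modulo $\fId^{9}$ must equal the mod $2$ reduction $\overline{\sigma_{4,4}}\in\Chd[8]{\Grass(7,2)}$. I would check this through the characteristic-class description. Wendt realises $\fS_{\gamma(\lambda)}$ via Pontryagin classes of $\cQ$, and the Pontryagin class $p_2(\cQ)$ reduces to $c_2(\cQ)^2$ modulo $2$. The Schubert-polynomial expression of $\sigma_{4,4}$ in the Chern classes of $\cQ$ then agrees modulo $2$ with the corresponding expression in Pontryagin classes. Once compatibility holds, the fibre product description of Chow--Witt groups (the analogue of Proposition \ref{PROP - CW comme image H et Witt}, valid in every degree for Grassmannians by \cite{Wendt_CHW-grass}) gives a class in $\Chowtilde[8]{\Grass(7,2),\cO}$ mapping to $\sigma_{4,4}$ under the forgetful map.

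The main obstacle is exactly this mod-$2$ compatibility, together with the fact that the twist is trivial rather than $\det\cQ$. I expect to handle the twist by a parity count. Classes $\fS_{\gamma(\lambda)}$ live in the twist $\det\cQ^{\otimes \vabs{\lambda}\cdot\text{even}}$, i.e.\ the trivial one. Note also that $\omega_G\simeq\cO(-7)\simquad\det\cQ$ by Proposition \ref{PROP - orientabilite grassmann}, so degree $8$ classes are not relatively oriented. This is the reason the product with an Euler class of twist $\det\cQ$ is needed in order to reach degree $10$.

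For the second item, no Schubert calculus is needed. The sheaf $\cQ$ is locally free of rank $2$ on $\Grass(7,2)$, so Definition \ref{DEF - Euler class with coefficients} with Milnor--Witt coefficients gives $e^{CW}(\cQ)\in\Chowtilde[2]{\Grass(7,2),\det\cQ}$. Its image under the forgetful map $F$ is the Chow Euler class, which equals $c_2(\cQ)=\sigma_{1,1}$ by the $\Gl$-orientation of the Milnor spectrum. This completes the lift.
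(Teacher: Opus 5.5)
Your plan follows the paper's route. For the first item you use Wendt's doubling map $\gamma$, with $(4,4)=\gamma(2)$. For the second you use the Chow--Witt Euler class, whose forgetful image is $c_2(\cQ)=\sigma_{1,1}$. The second item is fine as written. Your lifting criterion is also correct: a class in $\Chowct[8]{\Grass(7,2)}$ lifts to $\Chowtilde[8]{\Grass(7,2),\cO}$ exactly when its mod $2$ reduction lies in the image of $\rho\colon H^8(\Grass(7,2),\fId^8)\to\Chd[8]{\Grass(7,2)}$.

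The compatibility step for $\fS$, however, does not work as stated; the flaw is genuine though easy to repair.

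\emph{Why it fails.} Wendt's description of the torsion-free classes by Pontryagin polynomials is an identity in $\bfW$-cohomology. It therefore only holds modulo the $\eta$-torsion $\operatorname{Im}\beta\subset H^8(\Grass(7,2),\fId^8)$. But $\rho$ does not factor through $\bfW$-cohomology: $\rho\circ\beta=\mathrm{Sq}^2$ (Totaro), and $\mathrm{Sq}^2(\overline{\sigma_{5,2}})=\overline{c_1(\cQ)^4c_2(\cQ)^2}=\overline{\sigma_{5,3}}\neq 0$.

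\emph{A concrete instance.} Let $\mathcal K=\ker(\cO^{\oplus 7}\to\cQ)$, so that $p_4^\perp=p_4(\mathcal K)$, the class the paper names for $\fS_2$. Then $p_2(\cQ)^2$ and $p_4(\mathcal K)$ coincide in $\bfW$-cohomology, by the Whitney formula. Yet $\rho(p_2(\cQ)^2)=\overline{c_2(\cQ)}^{\,4}=\overline{\sigma_{4,4}}$, whereas $\rho(p_4(\mathcal K))=\overline{\sigma_4}^{\,2}=\overline{\sigma_{5,3}}+\overline{\sigma_{4,4}}$. So ``the corresponding expression in Pontryagin classes'' does not determine $\rho(\fS)$. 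For the representative $p_4^\perp$, your claimed equality $\rho(\fS)=\overline{\sigma_{4,4}}$ is in fact false.

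\emph{The repair.} Drop $\fS$ and apply your criterion to the explicit class $p_2(\cQ)^2$, whose reduction you computed correctly. More simply, deduce the first item from the second. The class $e^{CW}(\cQ)^4$ lies in $\Chowtilde[8]{\Grass(7,2),(\det\cQ)^{\otimes 4}}\simeq\Chowtilde[8]{\Grass(7,2),\cO}$, since the twist is a square. Its forgetful image is $c_2(\cQ)^4=\sigma_{1,1}^4=\sigma_{4,4}$.
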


\begin{rmq}
The class $c_2(\cQ) = \sigma_{1,1}$ also lifts to the $\fId-$cohomology since it is an Euler class. However, in the case of $\sigma_{4,4},$ there is no non-ambiguous description of its lift in the Chow-Witt group as the oriented Schubert calculus takes place in the $\fId-$cohomology.
\end{rmq}

Let us recall that from Proposition \ref{PROP - orientabilite grassmann}, we have the quadratic equivalence $\det \cQ \simquad \omega_{\Grass(7,2)}$ so, the twist we want for the problem to be oriented is the not trivial one.

\begin{cor}
The isomorphism $H^{10}(\Grass(7,2),\fId^{10}(\det \cQ)) \isomr \Witt(\ek)$ sends $\fS_{2}\cdot e(\cQ)$ to $\qscal{1}$.
\end{cor}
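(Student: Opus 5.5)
The plan is to reduce the statement to oriented Schubert calculus in the $\fId$-cohomology ring of $\Grass(7,2)$, following \cite{Wendt_oriented-schubert}. Here $\fS_2$ is read as $\fS_{4,4}=\tilde{\omega}(\sigma_2)$, where $\sigma_2$ is the class of a point in $\Chowct[2]{\Grass(3,1)}=\Chowct[2]{\eP^2}$. Its doubling $\gamma(\sigma_2)$ is $\sigma_{4,4}$, which lies in degree $8$ with trivial twist. The Euler class $e(\cQ)$ lies in $H^2(\Grass(7,2),\fId^2(\det\cQ))$. Their product therefore lands in $H^{10}(\Grass(7,2),\fId^{10}(\det\cQ))$. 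Since $\det\cQ\simquad\omega_{\Grass(7,2)}$ by Proposition \ref{PROP - orientabilite grassmann}, this top group is identified with $\Witt(\ek)$ through the quadratic degree, once the orientation $\det\cQ\simeq \omega\otimes\cO(\ast)^{\otimes 2}$ is fixed. So the task is to compute the degree of $\fS_{4,4}\cdot e(\cQ)$.

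\textbf{Step 1.} Compute the product by oriented Pieri. In \cite{Wendt_oriented-schubert}, multiplication by $e(\cQ)$ on the torsion-free part acts like a Pieri rule on even Young diagrams. Concretely, it sends a doubled diagram to the diagram with one column of height two added, landing in the $\det\cQ$-twisted part. The product should therefore be the fundamental class of a point, $\fS_{5,5}$, up to a sign. This is consistent with the Chow computation $\sigma_{1,1}\cdot\sigma_{4,4}=\sigma_{5,5}$.

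\textbf{Step 2.} Identify the degree of the point class. The rank map $\Witt(\ek)\to\eZ/2$ and the forgetful map to Chow groups are compatible with products. Since the Chow product is the class of a point, of degree $1$, the image in $\Witt(\ek)$ is a rank-one form. Because both factors are defined over $\eZ$, their product comes from $\Witt(\eZ)$, or is at least base-change invariant. Its degree is therefore $\pm\qscal{1}$. To fix the sign, I would realise the point concretely: take a $\ek$-rational point of $\Grass(7,2)$ obtained as the transverse intersection of a Schubert variety representing $\sigma_{4,4}$, which is a $\eP^2$ of lines in a fixed $\eP^2$, with the zero locus of a section of $\cQ$. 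I would then compute the local index of that section in standard Plücker coordinates. With the orientation chosen compatibly with these coordinates, the Jacobian determinant is $1$, so the local index is $\qscal{1}$.

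The main obstacle is the sign and normalisation convention. The lift $\tilde{\omega}$ is only defined up to the identification conventions of \cite{Wendt_oriented-schubert}, and the isomorphism of the top group with $\Witt(\ek)$ depends on the chosen orientation. I would handle this by fixing the orientation so that $\fS_{5,5}$ corresponds to $\qscal{1}$. In any case, a change of orientation by a square does not alter the class. The remaining sign would be absorbed into the normalisation of $\tilde{\omega}$, checked via the explicit local index computation in Step 2.
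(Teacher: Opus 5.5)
Your Step 1 is essentially the paper's ``pictorial'' alternative (oriented Pieri, \cite[Proposition 3.14]{Wendt_oriented-schubert}). Step 2, however, is where the real content of the statement lies, and it does not work.

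Compatibility with the rank map and with the Chow computation $\sigma_{1,1}\cdot\sigma_{4,4}=\sigma_{5,5}$ only shows that the $\Witt$-degree has odd rank, i.e.\ lies in $\qscal{1}+I(\ek)$. Even at the Chow--Witt level it gives a virtual form of rank $1$, not a one-dimensional form $\qscal{u}$. Over $\eR$, for instance, $2\qscal{1}-\qscal{-1}$ has rank $1$ and signature $3$. Passing to $\Witt(\eZ)\simeq\eZ$ does not help, since every odd integer satisfies the parity constraint.

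In fact no argument of this kind can succeed, because lifts of $\sigma_{4,4}$ are not unique (compare the paper's remark that this lift is ambiguous). For any $w\in\qscal{1}+I(\ek)$, the class $w\cdot\fS_2$ differs from $\fS_2$ by $(w-1)\fS_2$, which vanishes in $\Chd[8]{\Grass(7,2)}$. So $w\cdot\fS_2$ still lifts $\sigma_{4,4}$, while the degree of its product with $e(\cQ)$ gets multiplied by $w$. The statement is therefore about the specific lift $\tilde\omega(\sigma_2)$, and it needs structural information on $\tilde\omega$.

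That information is what the paper imports. Since there is no torsion in top degree, it works in $\bfW$-cohomology and uses \cite[Theorem 6.4]{Wendt_CHW-grass}. There $e(\cQ)$ is $e_2$, $\fS_2=\tilde\omega(c_2^\perp)$ is $p_4^\perp$, and $e_2p_4^\perp$ is the top class corresponding to $\qscal{1}$.

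Your local index computation could replace this citation, but only once you show that $\fS_2$ coincides, in $\bfW$-cohomology, with the pushforward of the suitably oriented fundamental class of the Schubert variety $\Sigma\simeq\eP^2$ of lines in a fixed plane. That identification is the non-trivial point, and you do not address it.

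Finally, the sign cannot be absorbed into the normalisation of $\tilde\omega$, which is fixed by the definition of $\fS_2$. A residual sign can only be absorbed into the identification of the top group with $\Witt(\ek)$. There it amounts to changing the orientation by the non-square $-1$, since $-\qscal{1}=\qscal{-1}$ in $\Witt(\ek)$. Your remark that a change by a square does not alter the class is therefore beside the point.
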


\begin{proof}
In maximal codimension, there is no torsion, so it is enough to check this claim on the $\bfW-$cohomology. This is then a direct consequence of \cite[Theorem 6.4]{Wendt_CHW-grass} with $e(\cQ)$ being $e_2$ and $\fS_{2}$ being $p_4^\perp$ as the image of $c_2^\perp$ by $\tilde{\omega}.$ A more pictorial way to see this is via Young diagrams directly and \cite[Proposition 3.14]{Wendt_oriented-schubert}.  
\end{proof}

In the end, the intersection product $e(\cQ) \cdot [\fH]$ is well-defined in $\Chowtilde[10]{\Grass(7,2),\det \cQ}.$ Its quadratic degree is a quadratically enriched count for the lines in a smooth del Pezzo surface of degree 5 and does not depend on which $\ek-$form of $S$ we work with.

\section{Witt degree using Bott residue formula}\label{SEC - calcul Witt schema Hilbert}

\subsection{The $N-$action over the trivial $\ek-$form of $V_5$}\label{SSEC - resultats prelim}

We can then compute the Witt degree under the assumption that $S$ embeds in a $V_5$ such that the curve $\fC$ from Section \ref{SSEC - droites de la Fano quintique} is isomorphic to $\eP^1_\ek$ without loss of generality. This way, we can implement the Bott residue formula from \cite{LevineAB24} since we have a $N-$action.

We will now give details about the induced $N-$action on $\fC \simeq \eP(V)$. The automorphism group $G$ is then $\PGL_2(\ek).$ It induces a $\SL_2(\ek)-$action on $\fC$ and by Proposition \ref{PROP - correspondance incidence Fano quintique}, all objects have an induced $G-$action thus an $\SL_2(\ek)-$action and a $N-$action. We can explicitly describe the $N-$action on all the objects we need.

$N$ acts on $\eP(V)$ with $V$ an $\ek-$vector space with base $e_0,e_1$ by 
\begin{align*}
    t\cdot (e_0,e_1) = &\ (t^a e_0,t^{-a}e_1)\\
    \sigma\cdot (e_0,e_1) = &\ (e_1,-e_0)
\end{align*}
with $a$ odd prime.\\
If we note $x_0,x_1$ the dual basis to the $e_i$, we get an induced action on $V^\vee$. The action of $N$ is given by the inverse of the transpose if we look at it from a matrix point of view. This gives the following:
\begin{align*}
    t\cdot (x_0,x_1) = &\ (t^{-a} x_0,t^{a}x_1)\\
    \sigma\cdot (x_0,x_1) =&\ (x_1,-x_0)
\end{align*}
with the same $a$ as before.

\begin{rmq}\label{RMQ - point fixe Sym2}
The action on $\eP(V)$ gives an action on $\eP(H^0(\eP(V),\cO_{\eP(V)}(2))) = \eP(\Symalg[2]{V})$ by
\begin{align*}
    t\cdot (e_0^2,e_0e_1,e_1^2) = &\ (t^{2a} e_0^2,e_0e_1, t^{-2a}e_1^2)\\
    \sigma\cdot (e_0^2,e_0e_1,e_1^2) = &\ (e_1^2,-e_0e_1,e_0^2).
\end{align*}
On this $\eP^2,$ the fixed point is $[0:1:0]$ if we take $\accol{e_0^2, e_0e_1,e_1^2}$ as the basis of $\Symalg[2]{V}$.
\end{rmq}

\subsection{Equivariant Euler class}\label{SSEC - classe euler equiv}

Let us note $p = [0:1:0]$ the fixed point of $\eP^2$ in the coordinates $[e_0f_0: e_0f_1 + e_1f_0 : e_1f_1]$ as described in Remark \ref{RMQ - description de E}. It is consistent with the previous description so there is no ambiguity.

From the proof of \cite[Proposition 2]{DFK25_fano22}, we have an explicit basis of $H^0(\fC,(\omega_\fC^\vee)^{\otimes 3})$ given by 
\[v_0 = e_0^5 f_0 ,\ v_1 = e^4_0 e_1 f_0 + \frac{1}{5} e^5_0 f_1 ,\ v_2 = e^3_0 e_1^2 f_0 + \frac{1}{2} e^4_0 e_1 f_1 ,\ v_3 = e^2_0 e_1^3 f_0 + e^3_0 e_1^2 f_1\]
\[v_4 = \frac{1}{2} e_0 e_1^4 f_0 + e^2_0 e_1^3 f_1 ,\ v_5 = \frac{1}{5} e_1^5 f_0 + e_0 e_1^4 f_1 ,\ v_6 = e_1^5 f_1\]
with $[e_0:e_1]$ and $[f_0:f_1]$ the coordinates on the first and second factors of $\fC\times \fC.$ The $N-$module $H^0(\fC,(\omega_\fC^\vee)^{\otimes 3})$ is the irreducible sub$-N-$module of $H^0(\fH,\cE\otimes\cO_\fH(1))$ that defines the morphism to $\eP^6$ as stated in Remark \ref{RMQ - morphisme de U vers P6} and that spans $\cE\otimes \cO_\fH(1)$.

It follows that the $N-$invariant space of $H^0(p,\rest{(\cE\otimes\cO_\fH(1))}{p})$ is
\[\qscal{e_0^5f_1, e_1^5f_0}.\]

The weights from the action by $t$ are $4a,-4a$ (there is no order question since it is even) and the action by $\sigma$ sends each generator to the opposite of the other generator. The induced representation is then $\Tilde{\cO}^-(4a)$ as defined in Definition \ref{DEF - fibre assoc N rpz}. From Theorem \ref{THO - classe euler N rpz}, we get
\[e_N^\cW(\rest{\cE\otimes\cO_\fH(1)}{p}) = -2a\Tilde{e}.\]
To finish the computation of the Bott residue formula, we have to multiply this class by the inverse of the Euler class of the tangent sheaf. We don't need $\cE$ anymore, so we can come back to the more straightforward description of the $N-$action on $\fH$ as in Remark \ref{RMQ - point fixe Sym2}. The dual of the Euler sequence for $\eP(\Symalg[2]{V}) = \fH$ gives 
\[0\to \cO \to \Symalg[2]{V^\vee}\otimes \cO_\fH(1) \to \cT_{\fH}\to 0\]
hence the global sections
\begin{align*}
    H^0(\fH,\cT_\fH) &\simeq H^0(\eP(\Symalg[2]{V}),\Symalg[2]{V^\vee}\otimes\cO_{\eP(\Symalg[2]{V})}(1))/H^0(\fH,\cO_\fH)\\
    & \simeq (\Symalg[2]{V^\vee}\otimes \Symalg[2]{V})/\ek.
\end{align*}
Taking the fibre at the point $p,$ we only have the generator $e_0e_1$ left in $\rest{\Symalg[2]{V}}{p}$. Thus, the irreducible $N-$representation of $H^0(p,\rest{\cT_\fH}{p})$ is 
\[\qscal{x_1^2, x_0^2}.\]
It has weight $2a$ and the action by $\sigma$ does not change the sign. Thus, the assigned bundle is $\tilde{\cO}^+(2a)$ over $BN.$ From Theorem \ref{THO - classe euler N rpz}, we get
\[e^\cW_N(\rest{\cT_\fH}{p}) = -a\Tilde{e}.\]

\begin{tho}\label{THO - classe euler dP 5}
Let $S$ be a smooth degree 5 del Pezzo surface. Its Hilbert scheme of lines is the zero locus of a section of $\cF = \cE\otimes \cO_\fH(1)$ in $\fH.$ Its class in the Chow-Witt group $\Chowtilde[10]{\Grass(7,2),\det \cQ}$ is also the intersection product $e(\cQ)\cdot [\fH].$ The Chow-Witt degree of this class is 
\[\deg^{CW} e(\cQ)\cdot [\fH] = 2 + 4 h \in \GW(\ek)\]
and is a quadratically enriched count of lines on $S.$
\end{tho}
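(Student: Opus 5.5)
The proof has three steps. First, reduce to the trivial form of $V_5$. Second, compute the Witt degree by the Bott residue formula. Third, recover the rank from the classical count.

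For the reduction, I will use the oriented Schubert calculus of Section \ref{SEC - calcul Schubert oriente}. The class of the Hilbert scheme of lines of $S$ in $\Chowtilde[10]{\Grass(7,2),\det \cQ}$ equals $e(\cQ)\cdot[\fH]$. Its quadratic degree depends only on the lift of $10\sigma_{4,4}$ together with $e(\cQ)$, and not on the $\ek$-form of $S$ or of $V_5$. So we may assume that $\fC\simeq \eP^1_\ek$. In that case the $N$-action of Section \ref{SSEC - resultats prelim} exists, and $\cF=\cE\otimes\cO_\fH(1)$ is $N$-linearised. The identification of $e(\cQ)\cdot[\fH]$ with the pushforward of $e(\cF)$ from $\fH$ is the excess intersection statement: $\fH\cap \Grass(6,2)$ is cut out by the section of $\cF$, which is the restriction of the section of $\cQ$ defining $\Grass(6,2)$.

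By Proposition \ref{PROP - classe Euler orientable}, $\cF$ is relatively orientable and $\deg c_2(\cF)=10$ by Remark \ref{RMQ - classe chern fibre des droites}. Hence
\[\deg^{CW}e(\cF)=w+nh, \qquad \mathrm{rk}(w)+2n=10,\]
and it remains to compute the Witt degree $w$. For this I will apply Corollary \ref{COR - simplification AB} and Proposition \ref{PROP - residu de Bott specialise points fixes}. By Remark \ref{RMQ - point fixe Sym2}, the unique weakly $N$-fixed point of $\fH$ is $p=[0:1:0]$; one should check that $|\fH|^N=\fH^N$ here, since $\fH^{T_1}$ also contains $[1:0:0]$ and $[0:0:1]$, which $\sigma$ swaps, so those are discarded. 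The localized equivariant Euler class is then
\[e_N^\cW(\rest{\cF}{p})\cdot e_N^\cW(\rest{\cT_\fH}{p})^{-1}=(-2a\Tilde e)\cdot(-a\Tilde e)^{-1}=2\]
in $H^0(BN,\cW)[(Mp^\ast e)^{-1}(e^{gen})^{-1}]$. Its image under the forgetful map to $\Witt(\ek)$ is $2=\qscal{1}+\qscal{1}$.

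The main obstacle is justifying this equality in the Witt group of $\ek$ itself, and not only in the localized coefficient ring. The ring $H^0(BN,\cW)=\Witt(\ek)[x_0]/(x_0^2-1)$ has to be related to $\Witt(\ek)$ via the non-equivariant degree, as in \cite{LevineAB24}. Concretely, one must show that the equivariant degree of $e_N(\cF)$ lies in $H^0(BN,\cW)$ and maps to $\deg^W e(\cF)$ under restriction along $\Spec{\ek}\to BN$. A second point to check is that $\Tilde e$ is a non-zero-divisor after inverting $Mp^\ast e$, so that the quotient equals $2$ and not $2$ plus a term killed by $(1+x_0)$; I expect this to follow from $\Tilde e^2=4p^\ast e^2$ in Theorem \ref{THO - classe euler N rpz}. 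Granting this, $w=2\qscal{1}$, so $\mathrm{rk}(w)=2$ and $n=4$. Therefore $\deg^{CW}e(\cQ)\cdot[\fH]=2+4h$, independent of the form by the first step.
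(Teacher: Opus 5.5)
Your outline follows the paper's proof: you reduce to the trivial form through the oriented Schubert calculus of Section \ref{SEC - calcul Schubert oriente}, take the Bott residue at the single weakly fixed point $p=[0:1:0]$, and recover the rank from $\deg c_2(\cF)=10$. Your check that $\sigma$ swaps the other two $T_1$-fixed points is correct and is only implicit in the paper. The gap is the step you grant, which the paper simply asserts (``its degree is then $2$ in $H^0(BN,\cW)$'').

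\textbf{The $1+x_0$ worry.} It is justified. Once $p^\ast e$ is inverted, $(1+x_0)x_2=0$ forces $x_0=-1$, so the residue determines the equivariant degree only up to multiples of $1+x_0$. The residue also genuinely depends on the equivariant structure: twisting the linearisation of $\cF$ by the sign character of $N$ does not change $w$, but it turns $\rest{\cF}{p}$ from $\tilde{\cO}^-(4a)$ into $\tilde{\cO}^+(4a)$ and so flips the residue. What settles this is the following.
\begin{itemize}
\item The action and the linearisation of $\cF$ are pulled back from $\SL_2$ (Proposition \ref{PROP - correspondance incidence Fano quintique}(5)).
\item Any relative orientation is then $\SL_2$-equivariant, because $\SL_2$ has no non-trivial characters.
\item Hence the $N$-equivariant degree is the image of the $\SL_2$-equivariant one, which lies in $H^0(B\SL_2,\cW)=\Witt(\ek)$ and restricts to $w$ at a point. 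It therefore has no $x_0$-component.
\end{itemize}

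\textbf{Invertibility of $\tilde e$.} This point does not close as you hope. The relation $\tilde e^2=4(p^\ast e)^2$ makes $\tilde e$ a unit only once $2$ is inverted as well. So the identity ``residue $=2$'' holds in a ring where all the ($2$-primary) torsion of $\Witt(\ek)$ dies. It pins down $w$ only modulo torsion, and gives nothing at all if $\ek$ is not formally real. To conclude in $\Witt(\ek)$ you need an extra rigidity input. One candidate is the paper's closing remark that the construction is defined over $\eZ$, so that the answer comes from $\GW(\eZ)$ and $w$ is determined by its signature.

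\textbf{Signs of the local classes.} The two local classes you import from Section \ref{SSEC - classe euler equiv} do not track signs.
\begin{itemize}
\item For $a$ odd, $\epsilon(4a)=1$ and $\epsilon(2a)=-1$. Theorem \ref{THO - classe euler N rpz} then gives $e(\tilde{\cO}^-(4a))=2a\tilde e$ and $e(\tilde{\cO}^+(2a))=a\tilde e$, not $-2a\tilde e$ and $-a\tilde e$.
\item $\sigma$ acts by $-1$ on $e_0e_1$, hence on $\rest{\cO_\fH(1)}{p}$. In the local coordinates $e_0^2/e_0e_1$ and $e_1^2/e_0e_1$ at $p$, it sends each coordinate to minus the other.
\item So with weight-ordered bases, $\rest{\cT_\fH}{p}$ is $\tilde{\cO}^-(2a)$, not $\tilde{\cO}^+(2a)$. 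The residue computed this way is $2a\tilde e/(-a\tilde e)=-2$.
\end{itemize}
More fundamentally, $\tilde{\cO}^+(m)\cong\tilde{\cO}^-(m)$ via an orientation-reversing change of basis. The sign of a local residue therefore means something only after the local bases are compared with the chosen isomorphism $\det\cF\simeq\omega_\fH\otimes\cL^{\otimes 2}$ at $p$. Changing that isomorphism multiplies the degree by some $\qscal{u}$ with $u\in\ek^\times$. This comparison, missing from both your sketch and the paper, is exactly what decides between $2+4h$ and $2\qscal{-1}+4h$.
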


\begin{proof}
Everything is mostly done. We can finish the computation of the degree of $e^{CW}(\cF)$ when we have a $N-$action. The equivariant Euler class is given by Proposition \ref{PROP - residu de Bott specialise points fixes} and the above computations. We have 
\[\deg^{W} e_N^\cW(\cF) = \frac{-2a\tilde{e}}{-a\tilde{e}} = 2\]
in the $N-$equivariant Borel-Moore homology $H^{BM}_{N,0}(\fH,\cW)[(P e^{gen})^{-1}].$ Its degree is then $2$ in $H^0(BN,\cW)$ and since we know that the degree of the Chern class $c_2(\cF)$ is 10 by Remark \ref{RMQ - classe chern fibre des droites}, we have the whole degree by Proposition \ref{PROP - classe Euler orientable}.

As it coincides with the degree of the intersection product for the trivial form and the product does not depends on the forms, we have the general result.
\end{proof}

\begin{rmq}
The statement is true over any perfect field of characteristic not 2 as the whole construction does not depend on the field. In fact, the construction of the Hilbert scheme of lines on $S$ in $\eP^2$ and in $\Grass(7,2)$ is still valid over $\eZ.$ The result we give then comes from the one over $\eZ$ and lives in $\GW(\eZ)$. The Chow-Witt degree is then only composed of 1s and $\qscal{-1}$s over any field, even though $\GW(\ek)$ may contain a lot more generators.
\end{rmq}

\bibliographystyle{alpha}
\bibliography{biblio_memoire.bib}

\end{document}